\newtheorem{theorem}{Theorem}[section]
\newtheorem{lemma}[theorem]{Lemma}
\newtheorem{proposition}[theorem]{Proposition}
\newtheorem{corollary}[theorem]{Corollary}
\theoremstyle{definition}
\newtheorem{definition}[theorem]{Definition}
\theoremstyle{remark}
\newtheorem{remark}[theorem]{Remark}
\numberwithin{equation}{section}
\newcommand{\eps}{\varepsilon}
\newcommand{\dimH}{\dim_{\mathrm{H}}}
\newcommand{\R}{\mathbb{R}}
\newcommand{\N}{\mathbb{N}}
\newcommand{\C}{\mathbb{C}}
\newcommand{\Hol}{H^{1,\mathrm{loc}}}
\newcommand{\Lloc}{L^{2}_{\mathrm{loc}}}
\newcommand{\Lcomp}{L^{2}_{\mathrm{comp}}}
\newcommand{\ri}{\mathrm{i}}
\newcommand{\cS}{\mathcal{S}}
\newcommand{\cA}{\mathcal{A}}
\newcommand{\cH}{\mathcal{H}}
\newcommand{\cF}{\mathcal{F}}
\newcommand{\supp}{\mathrm{supp}}
\newcommand{\spec}{\sigma}
\newcommand{\cond}{\mathrm{cond}}
\newcommand{\rd}{\mathrm{d}}
\newcommand{\re}{\mathrm{e}}
\newcommand{\dist}{\mathrm{dist}}
\newcommand{\rea}{\mathrm{Re}\,}
\newcommand{\bA}{\mathbb{A}}
\newcommand{\bH}{\mathbb{H}}
\newcommand{\bphi}{\boldsymbol{\phi}}
\newcommand{\bpsi}{\boldsymbol{\psi}}
\begin{document}
\title[Wavenumber-explicit estimates in scattering]{Integral equation methods for\\scattering by general compact obstacles:\\ wavenumber-explicit estimates}
\author[S. N. Chandler-Wilde and S. Sadeghi]{Simon N. Chandler-Wilde and Siavash Sadeghi}
%\author[S. Sadeghi]{Siavash Sadeghi}
\address{Department of Mathematics and Statistics,\\ University of Reading,\\ Whiteknights PO Box 220,\\ Reading RG6 6AX,\\ UK}
\email{s.n.chandler-wilde@reading.ac.uk}
%\address{Department of Mathematics and Statistics,\\ University of Reading,\\ Whiteknights PO Box 220,\\ Reading RG6 6AX,\\ UK}
\email{s.sadeghi@pgr.reading.ac.uk}

\begin{abstract}
\small %There has been significant recent interest in understanding the dependence on the wavenumber, $k$, of boundary integral operators (BIOs), supported on some set $\Gamma\subset \mathbb{R}^n$, that arise in the solution of boundary value problems for the Helmholtz equation, $\Delta u + k^2 u=0$. Recently, for the Dirichlet BVP with data $g$, Caetano et al ({\em Proc. R. Soc. A}, 481:20230650, 2025) have proposed a novel integral equation (IE) $A_k\phi=g$ that applies for arbitrary compact $\Gamma$, including cases where $\Gamma$ is fractal or has fractal boundary. This formulation is a generalisation of standard first kind IEs, where the BIO is $S_k$, the single-layer BIO on a surface $\Gamma$, that apply when $\Gamma$ is the boundary of a Lipschitz domain or a screen. 

%In this paper we study the dependence of $A_k$ on $k$, showing that, for $k\geq k_0>0$, $\|A_k\|\leq ck$ while $\|A_k^{-1}\| \leq c'k$ if $\Gamma$ is star-shaped, where $c, c'>0$ depend only on $k_0$ and $\Gamma$. Amongst other bounds we also show that: (i) on the one hand, given any mildly increasing unbounded positive sequence $(k_m)$ and any unbounded sequence $(a_m)$, there exists $\Gamma$, with connected complement,  such that $\|A_{k_m}^{-1}\|\geq a_m$ for every $m$; (ii) on the other hand, for every $\Gamma\subset \mathbb{R}^n$ and $k_0,\varepsilon, \delta>0$, there exists $c>0$ and $E\subset [k_0,\infty)$, with Lebesgue measure $m(E)\leq \varepsilon$, such that $\|A_{k}^{-1}\|\leq c k^{2n+2+\delta}$ on $[k_0,\infty)\setminus E$, i.e., the growth of  $\|A_{k}^{-1}\|$ is at worst polynomial in $k$ if one avoids a set $E$ of arbitrarily small measure. 

%As a corollary of these results we obtain the first $k$-explicit bounds on $\|S_k^{-1}\|$ and the condition number of $S_k$ for the case that $\Gamma$ is  the boundary of a Lipschitz domain, or a screen not contained in a hyperplane, and analogous estimates for the case that $\Gamma$ is a $d$-set (and so of Hausdorff dimension $d$), for non-integer values of $d$.

There has been significant recent interest in understanding the dependence on the wavenumber, $k$, of boundary integral operators (BIOs), supported on some set $\Gamma\subset \mathbb{R}^n$, that arise in the solution of the Helmholtz equation, $\Delta u + k^2 u=0$. Recently, for the Dirichlet boundary value problem with data $g$, Caetano et al ({\em Proc. R. Soc. A}, 481:20230650, 2025) have proposed a novel integral equation $A_k\phi=g$ that applies for arbitrary compact $\Gamma$. 
In this paper we study the dependence of $A_k$ on $k$, showing that, for $k\geq k_0>0$, $\|A_k\|\leq ck$ while $\|A_k^{-1}\| \leq c'k$ if $\Gamma$ is star-shaped, where $c, c'>0$ depend only on $k_0$ and $\Gamma$. Amongst other bounds we show that: (i) on the one hand, given any mildly increasing unbounded positive sequence $(k_m)$ and any unbounded sequence $(a_m)$, there exists $\Gamma$, with connected complement,  such that $\|A_{k_m}^{-1}\|\geq a_m$ for every $m$; (ii) on the other hand, for every $\Gamma\subset \mathbb{R}^n$ and $k_0,\varepsilon, \delta>0$, there exists $c>0$ and $E\subset [k_0,\infty)$, with Lebesgue measure $m(E)\leq \varepsilon$, such that $\|A_{k}^{-1}\|\leq c k^{2n+2+\delta}$ on $[k_0,\infty)\setminus E$, i.e., the growth of  $\|A_{k}^{-1}\|$ is at worst polynomial in $k$ if one avoids a set $E$ of arbitrarily small measure. 
As a corollary we obtain the first $k$-explicit bounds on the condition number of $S_k$, where $S_k$ is the standard single-layer BIO on $\Gamma$ when $\Gamma$ is  the boundary of a Lipschitz domain, and analogous estimates when $\Gamma$ is a $d$-set (and so of Hausdorff dimension $d$), for non-integer values of $d$.

\end{abstract}

\subjclass{Primary 45B05, 31B10; Secondary 35P25, 78A45, 28A80}

\keywords{boundary integral equation, Helmholtz equation, semiclassical analysis, fractal, scattering}

\date{today}

\maketitle

\tableofcontents

\section{Introduction} \label{sec:intro}

This paper is concerned with the solution, by integral equation (IE) methods, of the exterior sound-soft scattering problem for the Helmholtz equation. In more detail, given some compact obstacle $O\subset \R^n$, where $n\geq 2$ and $\Omega:= \R^n\setminus O$ is connected, and given some wavenumber $k>0$ and some incident field $u^i\in \Hol(\R^n)$ that satisfies the Helmholtz equation
\begin{equation} \label{eq:he0}
\Delta u^i+k^2u^i=0
\end{equation}
in some open neighbourhood
%\footnote{Frequently studied incident fields satisfying these conditions are the incident plane wave $u^i(x) := \exp(\ri k x\cdot d)$, $x\in \R^n$, where the unit vector $d\in \R^n$ is the direction of incidence, and the distributed source incident field given by $u^i=\cA_k h$, for some $h\in \Hmc(\R^n)$ with $\supp(h)\subset \Omega$, where the Newtonian potential operator $\cA_k$ is given by \eqref{eq:Newt} below.} 
 of $O$, the {\em exterior sound-soft or Dirichlet scattering problem} is the problem of determining the scattered  field $u\in \Hol(\Omega)$ that satisfies the Helmholtz equation
 % \footnote{We understand \eqref{eq:he0} and \eqref{eq:he} in a weak or distributional sense. Note that, if $u\in \Hol(\Omega)$ satisfies \eqref{eq:he} in a weak sense, then, by standard elliptic regularity results, $u\in C^\infty(\Omega)$, so that the pointwise evaluations of $u$ and its partial derivative in \eqref{eq:src} make sense and \eqref{eq:he} holds also in a classical sense.}
\begin{equation} \label{eq:he}
\Delta u + k^2 u = 0 \quad \mbox{in} \quad \Omega,
\end{equation}
 in a weak or distributional sense,
the Sommerfeld radiation condition
\begin{equation} \label{eq:src}
\frac{\partial u(x)}{\partial r} - \ri k u(x) = o(r^{-(n-1)/2}),  \quad \mbox{uniformly in $\widehat x := x/r$,}\quad \mbox{as} \quad r:= |x|\to \infty,
\end{equation}
 and  the boundary condition  that $u^t:= u^i+u=0$ on $\partial \Omega$, in the sense that $u^t\in \Hol_0(\Omega)$. (We recall the various function space notations that we use in \S\ref{sec:fs} below.)
It is well known that this scattering problem is well--posed;  uniqueness is proved via Green's theorem and Rellich's lemma, and existence and continuous dependence on data is proved, e.g., by {\em a priori} estimates coupled with a limiting absorption argument (see, e.g., \cite{wilcox1975,NR87,mclean2000strongly}). 

Suppose $\Omega$ is a Lipschitz domain, in the usual sense of \cite[Defn.~3.28]{mclean2000strongly}, which requires that $\partial \Omega$  is bounded and is locally, in some neighbourhood of each $x\in \partial \Omega$, the graph of a Lipschitz continuous function in some (possibly rotated) coordinate system, with points in $\Omega$ on just one side of this graph. Then existence of solution can alternatively  be proved by reformulation as a first kind IE involving the   acoustic single-layer boundary integral operator (BIO) (see \eqref{eq:first} and \eqref{eq:Sk} below). Essentially the same IE argument applies when $\Gamma=O$ is an infinitely thin screen with a sufficiently smooth boundary \cite{Stephan1987}. This approach to scattering problems for the Helmholtz equation, via reformulation as a boundary integral equation (BIE), has proved valuable both for the theoretical insights that these formulations provide and as a starting point for practical large-scale computations (e.g., \cite{ColKre,BrKu01,chandler2012numerical,WoGeBeAr2015,ChDeCi2017,AmChLo2019,LiAtDe2024}).

Recently, Caetano et al \cite{caetano2025integral} have shown that these IE formulations, for the case where $\Omega$ is Lipschitz or $O$ is a screen (see also \cite{ClaeysHiptmair2013} for the case where $O$ is a so-called {\em multi-screen}), can be generalised to apply in the case of a general compact obstacle $O\subset \R^n$.
 Let $\cA_k\psi$ denote the acoustic Newtonian potential with density $\psi$, which is defined by 
\begin{equation} \label{eq:Newt}
\mathcal{A}_k\psi(x)=\int_{\mathbb{R}^n}\Phi_k(x,y)\psi(y)\,dy, \quad \mbox{for a.e.} \quad x\in \R^n \quad \mbox{when} \quad \psi\in \Lcomp(\R^n),
\end{equation}
where $\Phi_k(\cdot,\cdot)$ is the (outgoing) fundamental solution of the Helmholtz equation \eqref{eq:he}, given by \eqref{eq:Phidef} below, which reduces  to 
\begin{equation} \label{eq:Phidef2}
\Phi_k(x,y) = \frac{\re^{\ri k|x-y|}}{4\pi|x-y|},  \qquad x,y\in \R^n, \;\; x\neq y,
\end{equation}
in the physically important case $n=3$. %This definition is extended to the case  $\psi \in \Hmc(\R^n)$
%via the density of $\Lcomp(\R^n)$ in $\Hmc(\R^n)$. Importantly (see \S\ref{sec:IE}), $\cA_k:\Hmc(\R^n) \to \Hol(\R^n)$, and this mapping is continuous.

To obtain the formulation of \cite{caetano2025integral} we choose a compact set $\Gamma$ with
\begin{equation} \label{eq:GamRest}
\partial O \subset \Gamma \subset O.
\end{equation}
(The geometry is illustrated in Figure \ref{fig:only}. Each choice of $\Gamma$ leads to a distinct, valid IE formulation: the choice $\Gamma = \partial O$ leads to an IE equivalent to the single-layer potential IE \eqref{eq:first} in the case that $\Omega$ is Lipschitz; the choice $\Gamma=O$ leads to an IE that is uniquely solvable for all $k>0$.) Having selected $\Gamma$, we
seek a solution to the scattering problem in the form $u|_\Omega$, where $u:=\cA_k\phi$ for some $\phi\in H^{-1}_\Gamma:= \{\psi\in H^{-1}(\R^n):\supp(\psi)\subset \Gamma\}$. As we recall in  \S\ref{sec:IE}, % (see Proposition \ref{prop:ie1}), 
this satisfies the scattering problem if $\phi$ satisfies the IE on $\Gamma$,
\begin{equation} \label{eq:iemain}
A_k\phi = g,
\end{equation}
where $A_k$ is a composition of $\cA_k$ with multiplication by a smooth cut-off function $\chi$ and a projection operator $P$ onto a closed subspace $R(P)$ of $H^1(\R^n)$ and $g$ is the projection of $-\chi u^i$ onto that subspace. There is a natural identification of $R(P)$ with the dual space $(H_\Gamma^{-1})^*$ of $H_\Gamma^{-1}$, so that we may view $A_k$ as a continuous mapping from $H^{-1}_\Gamma$ to $(H_\Gamma^{-1})^*$. 

As we will see in \S\ref{sec:IE}, % (Corollary \ref{cor:exists}),
 the IE \eqref{eq:iemain} has a solution for all $k>0$, indeed \cite{caetano2025integral} exactly one solution for all $k>0$ if $\Gamma=O$ so that $\Omega_-:= O\setminus \Gamma$ is empty, otherwise exactly one solution if and only if
\begin{equation} \label{eq:SigmaDef}
k^2\not\in \sigma(-\Delta_D(\Omega_-)),
\end{equation}
where $\sigma(-\Delta_D(\Omega_-))$ denotes the spectrum of $-\Delta_D(\Omega_-)$, the negative Dirichlet Laplacian on $L^2(\Omega_-)$.  (Recall (e.g., \cite{levitin2023topics}) that  $\sigma(-\Delta_D(\Omega_-))$ is a countable subset of $(0,\infty)$  whose only accumulation point is $+\infty$, consisting only of eigenvalues, so that $k^2\in \sigma(-\Delta_D(\Omega_-))$ if and only if there exists a non-zero $v\in H_0^1(\Omega_-)$ that satisfies \eqref{eq:he} in $\Omega_-$.) With the convention that $ \sigma(-\Delta_D(\Omega_-))$ $:= \emptyset$ when $\Omega_-=\emptyset$, it follows for all compact $O$ and all compact $\Gamma$ satisfying  \eqref{eq:GamRest}, that, for $k>0$,
\begin{equation} \label{eq:invert}
A_k \mbox{ is invertible if and only if } k^2\not\in \sigma(-\Delta_D(\Omega_-)), \mbox{ where } \Omega_-:= O\setminus \Gamma.
\end{equation}

%Figure \ref{fig:only} illustrates the geometry, showing one of infinitely many possible choices of $\Gamma$ satisfying \eqref{eq:GamRest}, and the disjoint open sets $\Omega$ and $\Omega_-$ that comprise the complement, $\Gamma^c$, of $\Gamma$. 

\begin{figure}
\begin{center}
\begin{tikzpicture}[scale=0.7]
\fill[lightgray] (0,0)--(0,-2)--(2,-1)--(2,1)--(-2,2)--cycle;
\draw[blue,thick] (0,0)--(0,-2)--(2,-1)--(2,1)--(-2,2)--cycle; \draw[blue,thick] (1,0.5)--(2,0.5);
\draw(2,1.4)node[right]{\large \color{blue}$\Gamma$};
\draw(-2.5,-1) node[right]{\large $\Omega$};
\draw(0.5,-0.2) node[right]{\large  $\Omega_-$ };
\end{tikzpicture}
\end{center}
\caption{Geometry schematic. The compact obstacle $O$ is shaded in gray, and $\Omega= O^c:= \R^n\setminus O$ is its connected complement. $\Gamma$ (thick blue lines) is (one possible choice of) the compact set, satisfying \eqref{eq:GamRest},  on which the IE \eqref{eq:iemain} is posed, and $\Omega_-:=O\setminus \Gamma$, so that $O=\Gamma\cup \Omega_-$ and $\Omega\cup \Omega_-=\Gamma^c$.}\label{fig:only}
\end{figure}
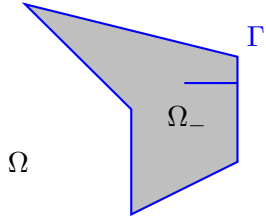

%where $\Omega_-:= O\setminus \Gamma$.
%One contribution of \cite{caetano2025integral} is, as we have outlined above, to derive the IE formulation \eqref{eq:iemain} that is valid and equivalent to the scattering problem for every compact $O$ (the focus in \cite{caetano2025integral} is on dimensions $n=2,3$, but we show in \S\ref{sec:IE} that the arguments extend to all $n\geq 2$). A second contribution of \cite{caetano2025integral} is, for interesting compact obstacles $O$, where $O$ is fractal or has fractal boundary, to propose (for $n=2,3$) a fully discrete Galerkin method for numerical solution of \eqref{eq:iemain}, illustrated by 2D and 3D numerical results.  

The contribution of the current paper is to obtain the first results on dependence on the wavenumber $k$ for this new IE formulation that applies for any compact obstacle $O$. Indeed, we also obtain the first results on wavenumber dependence for first kind BIEs for acoustic scattering by general Lipschitz obstacles and screens. Our main results are bounds on the norm of  $A_k$, its inverse, and condition number that are explicit in their dependence on $k$, in the large wavenumber regime (i.e., for $k\geq k_0$ for arbitrary fixed $k_0>0$). This leads to corresponding bounds for $S_k$, the classical acoustic single-layer BIO defined by \eqref{eq:Sk} below, and analogous bounds in the case that $\Gamma$ is a $d$-set in the sense of \eqref{eq:dset}, so that $\Gamma$ has fractal dimension $d$, in which case \eqref{eq:iemain} can be written as an IE in which integration is with respect to $d$-dimensional Hausdorff measure. These results all have implications for numerical analysis that we discuss  in  \S\ref{sec:sig}. % with a view to, in subsequent work, obtaining an understanding of the joint dependence on $k$ and the level of discretisation of error estimates for the numerical solution of \eqref{eq:iemain}, and of the performance of iterative solvers for \eqref{eq:iemain}.   

 The investigation of 
the wavenumber dependence of the norms of integral operators and their inverses that appear in  integral equations in acoustics -- the focus of this paper -- dates, for the case of scattering by a ball, to Kress and Spassow \cite{KrSp83,Kr85} and Amini \cite{Am90}.
In the last 20 years there has been significant effort devoted to rigorous analysis of these questions for a variety of boundary conditions and for general classes of scatterers \cite{ChHe:06,BaSa2007,BufSau07,DomGra07,CWMonk2008,ChGrLaLi:09,SpeCha11,BetCha11,chandler2012numerical,Me:12,BoTu2013,spence2014wavenumber,CWHewett2015,HaTa:15,SpKaSm:15,baskin2016sharp,GaSp2019,chandler2020high,lafontaine2021most,GaMaSpe2022,SiavashSimon1}. These investigations have been motivated particularly by numerical analysis questions related to the solution of these BIEs by boundary element methods (BEMs) and other discretisation schemes, and have focussed particularly on the high-wavenumber limit where 3D computation is most challenging, requiring large-scale computational resources (e.g. \cite{WoGeBeAr2015,ChDeCi2017,AmChLo2019,LiAtDe2024}).

Let us outline the rest of the paper. In \S\ref{sec:two} we detail our results on wavenumber dependence, their implications for numerical analysis, and the relationship of our results to the existing literature, detailing the ways in which our results are distinctive, including that they are amongst the first $k$-dependence results for first kind integral equations and the first for integral equations for arbitrary compact scatterers.
In \S\ref{sec:fs} we detail the function space notations and results that we will need to prove our main results in later sections. Section \ref{sec:IE} studies the IE formulation \eqref{eq:iemain} of \cite{caetano2025integral} in more detail, extending the analysis in \cite{caetano2025integral} for $n=2,3$ to higher dimensions, and making other extensions key to our $k$-explicit arguments. Section \ref{sec:main1} provides the proofs of results postponed from \S\ref{sec:main}. In \S\ref{sec:dset} we study the case that $\Gamma$ is a $d$-set (is Ahlfors-David $d$-regular) in the sense of \eqref{eq:dset}, in which case, as shown in \cite{caetano2025integral}, \eqref{eq:iemain} can be written as \eqref{eq:ie22}, where the operator $\bA_k$ that appears is an integral operator with kernel $\Phi_k(x,y)$ and with integration with respect to $d$-dimensional Hausdorff measure. %Our contributions are, using the results of \S\ref{sec:main}, to establish the first $k$-explicit bounds for $\bA_k$ and its inverse. 
In \S\ref{sec:slp2} we establish our bounds on $S_k$ and its inverse, stated in \S\ref{sec:slp}, by application of the bounds in \S\ref{sec:main} and \S\ref{sec:dset}. 

We remark that an announcement of preliminary versions of some of the results of this paper is contained in the conference paper \cite{SiavashSimoncon}.
 
\section{Our main results and their significance} \label{sec:two} 
Below we detail in \S\ref{sec:main} the main results of the paper, namely our bounds on the norm of $A_k$ and its inverse. In \S\ref{sec:sig} we say more about the significance of our results for numerical analysis and make a more detailed comparison with the existing literature, explaining the innovative features of the results of this paper. In \S\ref{sec:slp} we explore the implications of our results for the classical acoustic single-layer BIO $S_k$, defined by \eqref{eq:Sk} below, summarising the bounds on the norm of $S_k$ and its inverse that we obtain as a corollary of our main results, in the cases that $\Omega$ is Lipschitz or the scattering obstacle is a screen, and making comparison to the (few) existing results in this direction. 

The proofs of key bounds on the norm of $A_k$ and its inverse,  namely Propositions \ref{prop:Ak} and \ref{prop:Ak2}, Theorem \ref{thm:main}, and Proposition \ref{prop:lower}, are deferred to \S\ref{sec:main1}. Similarly, the proofs of our bounds on the single-layer BIO $S_k$ and its inverse, Corollaries \ref{thm:Sk0}--\ref{cor:L2}, are deferred to \S\ref{sec:slp2}.

\subsection{Our main results} \label{sec:main}

As outlined in \S\ref{sec:intro}, and discussed in more detail in \S\ref{sec:IE}, $A_k$ is a bounded operator from $H^{-1}_\Gamma$ to $R(P)\subset H^1(\R^n)$, where the closed subspace $R(P)$ is the range of a continuous projection operator $P:H^1(\R^n)\to H^1(\R^n)$ that we will introduce in \S\ref{sec:IE}, where we will identify $R(P)$ as a realisation of $(H_\Gamma^{-1})^*$, the dual space of $H_\Gamma^{-1}$. Further, $A_k^{-1}$ is bounded for $k^2\not\in \sigma(-\Delta_D(\Omega_-))$. The main contribution of this paper is: {\em to obtain bounds on the norm of  $A_k$ and its inverse that are explicit in their dependence on the wavenumber $k$}.  

A complication in this investigation is that there is flexibility in the choice of the projection operator $P$ that appears in the definition \eqref{eq:Akdef2} of $A_k$ below, and that the norm of $A_k$ and its inverse depend on this choice of $P$, as discussed in \S\ref{sec:choice1}. But, once Hilbert space norms have been selected for $H^{\pm 1}(\R^n)$ (for example, the norms \eqref{eq:unorm} or \eqref{eq:knorm}), there is a canonical choice of $P$, namely to choose $P$ to be an orthogonal projection operator, with which choice, as discussed in \S\ref{sec:choice1}: (i) the range $R(P)$ of $P$ is a natural unitary realisation of $(H_\Gamma^{-1})^*$; (ii) the norm of $A_k$ and its inverse coincide with the continuity constant and the reciprocal of the inf-sup constant, respectively, of the sesquilinear form associated to $A_k$ (which sesquilinear form is independent of the choice of $P$). 

Our focus in this paper will be on this canonical choice for $P$ in the case that $H^{\pm 1}(\R^n)$ are equipped with the standard norms \eqref{eq:unorm}, and in the case that they are equipped with the wavenumber-dependent norms \eqref{eq:knorm}. We will denote $P$ and $A_k$ by $P_1$ and $P_k$ and by $A_{1,k}$ and $A_{k,k}$ in these respective cases (see \S\ref{sec:choice1} and \eqref{eq:Akdef}). 

Concretely, as discussed in \S\ref{sec:choice1}, $A_{1,k}$ is a mapping from $H^{-1}_\Gamma$ to  $R(P_1) = \widetilde H^1(\Gamma^c)^\perp$, where 
\begin{equation} \label{eq:Omega*}
\Gamma^c := \R^n\setminus \Gamma, \qquad \widetilde H^1(\Gamma^c):= \text{clos}_{H^1(\R^n)}(C^{\infty}_0(\Gamma^c)),
\end{equation}
and  $\widetilde H^1(\Gamma^c)^\perp$ is the orthogonal complement of  $\widetilde H^1(\Gamma^c)$ in $H^1(\R^n)$ equipped with the norm \eqref{eq:unorm}. Importantly, $\widetilde H^1(\Gamma^c)^\perp$ is a natural realisation in $H^1(\R^n)$, equipped with the standard norm \eqref{eq:unorm}, of the dual space  $(H^{-1}_\Gamma)^*$ (see the discussion below \eqref{eq:dual2}). Throughout the paper when we write $\|A_{1,k}\|$ and $\|A_{1,k}^{-1}\|$ these will denote the standard induced operator norms of $A_{1,k}$ and $A_{1,k}^{-1}$ when $H_\Gamma^{-1}\subset H^{-1}(\R^n)$ and $\widetilde H^1(\Gamma^c)^\perp\subset H^1(\R^n)$ are equipped with the standard norms on $H^{\pm 1}(\R^n)$, given by \eqref{eq:unorm}.

But most of our results, as is usual in the study of $k$-dependence of operator norms (see, e.g., the discussion in \cite{CWHewett2015}), will be for the case when  $H^{\pm 1}(\R^n)$ are equipped with the wavenumber-dependent norms \eqref{eq:knorm}, so that the canonical choice of $P$ is $P=P_k$, with $R(P_k) = \widetilde H^1(\Gamma^c)^{\perp_k}$, the orthogonal complement of $\widetilde H^1(\Gamma^c)$ when $H^1(\R^n)$ is equipped with the norm \eqref{eq:knorm}, and  $A_k=A_{k,k}$. We denote the  induced  operator norms in this case by $\|A_{k,k}\|_k$ and $\|A_{k,k}^{-1}\|_k$, so that
\begin{align} \label{eq:Aknorm}
\|A_{k,k}\|_k  &:= \sup_{0\neq \psi\in H^{-1}_\Gamma}\frac{\|A_{k,k}\psi\|_{H^{1}_k(\R^n)}}{\|\psi\|_{H^{-1}_k(\R^n)}},\\ \label{eq:Aknorm2}
  \|A_{k,k}^{-1}\|_k &:= \sup_{0\neq \psi\in \widetilde H^1(\Omega)^{\perp_k}}\frac{\|A_{k,k}^{-1}\psi\|_{H^{-1}_k(\R^n)}}{\|\psi\|_{H^{1}_k(\R^n)}}.
\end{align}
We emphasise that these norms coincide with the continuity and the reciprocal of the inf-sup constants, respectively, of the sesquilinear form corresponding to $A_{k,k}$; see \eqref{eq:samenorms} and \eqref{eq:samenorms2} applied with $t=k$.

%Note that $\|A_k\|=\|A_k\|_1$ and $\|A^{-1}_k\|=\|A^{-1}_k\|_1$. 
Our main results, summarised below, are stated as bounds on $\|A_{k,k}\|_k$ and $\|A_{k,k}^{-1}\|_k$. But note that these lead immediately to corresponding bounds on $\|A_{1,k}\|$ and $\|A_{1,k}^{-1}\|$; it follows from the inequality \eqref{eq:equiv} below and \eqref{eq:samenorms} and \eqref{eq:samenorms2} applied with $t=1$ and $t=k$, that, for $k>0$, 
\begin{equation} \label{eq:equiv2}
 (\min(1,k^{-1}))^2\|A_{k,k}\|_k\leq \|A_{1,k}\| \leq (\max(1,k^{-1}))^2\|A_{k,k}\|_k
\end{equation}
and
\begin{equation} \label{eq:equiv2a}
(\min(1,k))^2\|A^{-1}_{k,k}\|_k \leq\|A^{-1}_{1,k}\| \leq (\max(1,k))^2\|A^{-1}_{k,k}\|_k.
\end{equation}
Combining bounds on $A_{t,k}$ and $A_{t,k}^{-1}$, for $t=1,k$, we also obtain bounds on 
$$
\cond(A_{1,k}) := \|A_{1,k}\|\, \|A_{1,k}^{-1}\| \quad \mbox{and} \quad \cond_k(A_{k,k}) := \|A_{k,k}\|_k\, \|A_{k,k}^{-1}\|_k,
$$
the condition numbers of  $A_k$ when $P=P_1$ and $P=P_k$, respectively, and when $H^{s}(\R^n)$ is equipped with the norms $\|\cdot\|_{H^{s}(\R^n)}$ and $\|\cdot\|_{H_k^{s}(\R^n)}$, respectively, for $s=\pm 1$. We discuss the value of estimating these condition numbers in \S\ref{sec:sig} below.

The following is our upper bound on the norm of $A_{k,k}$.

\begin{proposition} \label{prop:Ak} 
Given any $k_0>0$, there exists $c>0$ such that
$$
\|A_{k,k}\|_k \leq ck, \qquad k\geq k_0.
$$
\end{proposition}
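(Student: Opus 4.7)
My plan is to exploit that $P_k$ is the $H^1_k$-orthogonal projection onto $R(P_k)$, so that $\|A_{k,k}\phi\|_{H^1_k(\R^n)}\leq \|\chi\cA_k\phi\|_{H^1_k(\R^n)}$, where $\chi\in C^\infty_c(\R^n)$ is the fixed cutoff with $\chi\equiv 1$ in a neighbourhood of $\Gamma$ that appears in the definition of $A_k$. The task thus reduces to proving the cut-off Newtonian potential bound $\|\chi\cA_k\phi\|_{H^1_k(\R^n)}\leq ck\|\phi\|_{H^{-1}_k(\R^n)}$ for $\phi\in H^{-1}_\Gamma$ and $k\geq k_0$.

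Setting $u:=\cA_k\phi\in\Hol(\R^n)$, my starting point is the distributional identity
\begin{equation*}
(-\Delta+k^2)(\chi u) = \phi+2k^2\chi u-[\Delta,\chi]u \quad \text{in } \R^n,
\end{equation*}
which follows from $(-\Delta-k^2)u=\phi$ and $\chi\phi=\phi$, together with the fact that $\chi u\in H^1(\R^n)$ is compactly supported. Testing against arbitrary $v\in H^1(\R^n)$ and taking the supremum over $\|v\|_{H^1_k}=1$ yields
\begin{equation*}
\|\chi u\|_{H^1_k} \leq \|\phi\|_{H^{-1}_k} + 2k\|\chi u\|_{L^2} + c\|\eta u\|_{L^2},
\end{equation*}
where: the first term uses the definition of $\|\cdot\|_{H^{-1}_k}$; the second uses $|2k^2(\chi u,v)_{L^2}|\leq 2k\|\chi u\|_{L^2}\|v\|_{H^1_k}$ via $k\|v\|_{L^2}\leq\|v\|_{H^1_k}$; and the third follows from one further integration by parts in $\langle[\Delta,\chi]u,v\rangle$, observing that $[\Delta,\chi]u=(\Delta\chi)u+2\nabla\chi\cdot\nabla u$ is supported in $K:=\supp\nabla\chi$, which is disjoint from $\Gamma$, so that $u=\eta u$ on $K$ for a convenient choice of $\eta\in C^\infty_c(\R^n)$ with $\eta\equiv 1$ on $K$ and $\supp\eta\cap\Gamma=\emptyset$ (the constant $c$ depending only on $\chi,\eta,k_0$).

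The proof thus hinges on the wavenumber-uniform bound $\|\chi'\cA_k\phi\|_{L^2(\R^n)}\leq c\|\phi\|_{H^{-1}_k(\R^n)}$ for $\phi\in H^{-1}_\Gamma$, $k\geq k_0$, and any fixed $\chi'\in C^\infty_c(\R^n)$, which I would apply with $\chi'=\chi$ and with $\chi'=\eta$. I would deduce this by duality: using that the formal $L^2$-adjoint of $\cA_k$ is $\cA_{-k}$ and inserting $\hat\chi\in C^\infty_c(\R^n)$ with $\hat\chi\equiv 1$ on $\Gamma$, for $g\in L^2(\R^n)$ one has
\begin{equation*}
(\chi'\cA_k\phi,g)_{L^2} = \langle\phi,\hat\chi\,\cA_{-k}(\chi'g)\rangle \leq \|\phi\|_{H^{-1}_k}\,\|\hat\chi\cA_{-k}(\chi'g)\|_{H^1_k},
\end{equation*}
reducing matters to the wavenumber-uniform mapping bound $\|\hat\chi\cA_{\pm k}\chi'\|_{L^2\to H^1_k}\leq c$ for $k\geq k_0$.

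This last estimate is the key analytic input and the main obstacle. I would establish it by combining the classical free-space cut-off resolvent bound $\|\hat\chi\cA_{\pm k}\chi'\|_{L^2\to L^2}\leq ck^{-1}$ — which holds for $k\geq k_0$ since the unperturbed problem is trivially non-trapping — with a Caccioppoli-type energy argument: testing $(-\Delta-k^2)w=\chi'g$ against $\hat\chi^2\bar w$ and taking real parts controls $\|\hat\chi\nabla w\|_{L^2}^2$ in terms of $k^2\|\hat\chi w\|_{L^2}^2$, $\|g\|_{L^2}^2$, and lower-order cutoff commutator terms. Assembling these ingredients and invoking $\|P_k\|_{H^1_k\to H^1_k}=1$ closes the proof.
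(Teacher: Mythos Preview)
Your argument is correct but proceeds by a genuinely different route from the paper's proof. Both begin by observing that $\|P_k\|=1$ reduces matters to bounding $\chi\cA_k$ (or $\chi\cA_k\chi$) from $H^{-1}_k$ to $H^1_k$, and both ultimately rest on the free-space non-trapping cut-off resolvent estimate. The paper, however, first establishes the endpoint bound $\|\chi\cA_k\chi\|_{L^2\to H^2_k}\leq Ck$ (quoting a free-space $L^2\to H^2_k$ resolvent bound), uses the real-self-adjointness of $\chi\cA_k\chi$ to transfer this to $\|\chi\cA_k\chi\|_{H^{-2}_k\to L^2}\leq Ck$, and then applies \emph{complex interpolation} on the exact scale $\{H^s_k\}$ to obtain the intermediate $H^{-1}_k\to H^1_k$ bound. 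Your approach instead exploits directly that $-\Delta+k^2:H^1_k\to H^{-1}_k$ is a unitary isomorphism, so that $\|\chi u\|_{H^1_k}=\|(-\Delta+k^2)(\chi u)\|_{H^{-1}_k}$, and then estimates the right-hand side term by term; this reduces to the $L^2\to L^2$ and (by duality) $L^2\to H^1_k$ cut-off resolvent bounds, the latter of which you recover by a Caccioppoli argument rather than citing an $H^2_k$ estimate. Your route is more elementary in that it avoids interpolation theory, at the cost of some hands-on commutator bookkeeping; the paper's route is slicker and, as a bonus, delivers the full range of bounds $\|\chi\cA_k\chi\|_{H^{s-1}_k\to H^{s+1}_k}\leq Ck$ for $-1\le s\le 1$, which are reused later in the paper (e.g.\ in the proof of Proposition~\ref{prop:lower}).
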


\noindent In many cases this bound is sharp, as captured in our second result.
\begin{proposition} \label{prop:Ak2} 
Suppose that $\mathrm{int}(\Gamma)$, the interior of $\Gamma$, is non-empty. Then, given any $k_0>0$, there exists $c'>0$ such that
$$
\|A_{k,k}\|_k \geq c'k, \qquad k\geq k_0.
$$
\end{proposition}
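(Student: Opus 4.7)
My plan is to exhibit, for each $k\geq k_0$, an explicit $\phi_k\in H^{-1}_\Gamma$ realising the lower bound $\|A_{k,k}\phi_k\|_{H^1_k(\R^n)}/\|\phi_k\|_{H^{-1}_k(\R^n)}\geq c'k$. Since $\mathrm{int}(\Gamma)\neq\emptyset$, I would fix a real-valued $\rho\in C^\infty_0(\mathrm{int}(\Gamma))$ with $\rho\not\equiv 0$, a unit vector $\hat e\in\R^n$, and set
$$
v_k(x):=\re^{\ri k\,\hat e\cdot x}\rho(x),\qquad \phi_k:=(-\Delta-k^2)v_k,
$$
so that $v_k,\phi_k\in C^\infty_0(\mathrm{int}(\Gamma))\subset \Lcomp(\R^n)\cap H^{-1}_\Gamma$. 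The function $v_k$ is the standard semiclassical quasimode whose Fourier transform concentrates near the sphere $|\xi|=k$, where the symbol of $-\Delta-k^2$ vanishes; this is what will produce the amplification factor $k$.

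I would then identify $A_{k,k}\phi_k$ in three short steps. First, direct computation using the fundamental solution identity $(-\Delta-k^2)\Phi_k=\delta$ and the convolution rule $L(f*g)=(Lf)*g$ for the constant-coefficient operator $L=-\Delta-k^2$ gives $\cA_k\phi_k=\Phi_k*(-\Delta-k^2)v_k=\delta*v_k=v_k$. Second, choosing the cutoff $\chi$ in the definition of $A_k$ identically $1$ on a neighbourhood of $\Gamma$ (as done in \S\ref{sec:IE}) yields $\chi\cA_k\phi_k=v_k$. Third, I claim $v_k\in R(P_k)=\widetilde H^1(\Gamma^c)^{\perp_k}$: for any $w\in C^\infty_0(\Gamma^c)$ the supports of $v_k$ and $w$ are disjoint, hence $\langle v_k,w\rangle_{H^1_k(\R^n)}=0$, and density in $\widetilde H^1(\Gamma^c)$ completes the check. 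Combining these three facts gives $A_{k,k}\phi_k=P_k(\chi\cA_k\phi_k)=P_k v_k=v_k$.

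Next I would estimate the two norms explicitly. Using $\nabla v_k=\re^{\ri k\hat e\cdot x}(\ri k\hat e\rho+\nabla\rho)$ and the identity $\int\rho(\hat e\cdot\nabla\rho)\,dx=\tfrac12\int\hat e\cdot\nabla(\rho^2)\,dx=0$ to kill the cross term yields
$$
\|v_k\|^2_{H^1_k(\R^n)}=\int\bigl(|\nabla v_k|^2+k^2|v_k|^2\bigr)dx=2k^2\|\rho\|^2_{L^2}+\|\nabla\rho\|^2_{L^2}.
$$
Since $\phi_k=-2\ri k\re^{\ri k\hat e\cdot x}(\hat e\cdot\nabla\rho)-\re^{\ri k\hat e\cdot x}\Delta\rho$, one has $\|\phi_k\|_{L^2}\leq 2k\|\nabla\rho\|_{L^2}+\|\Delta\rho\|_{L^2}$; combining with the elementary inequality $\|\psi\|_{H^{-1}_k(\R^n)}\leq k^{-1}\|\psi\|_{L^2}$ gives $\|\phi_k\|_{H^{-1}_k(\R^n)}\leq 2\|\nabla\rho\|_{L^2}+k_0^{-1}\|\Delta\rho\|_{L^2}$ uniformly for $k\geq k_0$. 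Dividing produces $\|A_{k,k}\phi_k\|_{H^1_k}/\|\phi_k\|_{H^{-1}_k}\geq c'k$ with $c'>0$ depending only on $\rho$ and $k_0$.

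The one conceptual point is verifying $v_k\in R(P_k)$, which is made trivial here by disjointness of supports but is the only step in which the particular definition of the projection intervenes; the rest is a classical quasimode/modulated-bump computation.
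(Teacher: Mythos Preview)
Your proof is correct and follows essentially the same approach as the paper: both construct the quasimode $v_k(x)=\re^{\ri k\hat e\cdot x}\rho(x)$ supported in $\mathrm{int}(\Gamma)$, set $\phi_k=(-\Delta-k^2)v_k$, use \eqref{eq:inv} (equivalently your convolution identity) to get $\cA_k\phi_k=v_k$, observe that $v_k\in\widetilde H^1(\Gamma^c)^{\perp_k}$ by support considerations so that $A_{k,k}\phi_k=v_k$, and then compare $\|v_k\|_{H^1_k}\sim k$ with $\|\phi_k\|_{H^{-1}_k}\leq k^{-1}\|\phi_k\|_{L^2}=O(1)$. Your write-up is slightly more explicit in the norm computation (exploiting the vanishing cross term to get the exact value $2k^2\|\rho\|_{L^2}^2+\|\nabla\rho\|_{L^2}^2$), but the argument is the same.
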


Our main bound on $A^{-1}_{k,k}$ is expressed in terms of the norms of (cut-off) resolvents of $-\Delta$. Given a domain $U\subset \R^n$, let $-\Delta_D(U)$ denote the self-adjoint operator that is the negative Dirichlet Laplacian on $L^2(U)$ with domain $\{v\in H_0^1(U):\Delta v \in L^2(U)\}$ (e.g., \cite[\S2.1.6]{levitin2023topics}). Then, for $k^2\not\in \spec(-\Delta_D(U))$, the resolvent $R(k;U):=  (-\Delta_D(U)-k^2)^{-1}:L^2(U)\to L^2(U)$ is well-defined as a bounded operator. Indeed, since $-\Delta_D(U)$ is self-adjoint, 
$$
\|R(k;U)\|_{L^2(U)\to L^2(U)}=\dist\left(k^2,\spec(-\Delta_D(U))\right)^{-1}, \quad k^2\not\in \spec(-\Delta_D(U)).
$$

Our results below make use of $-\Delta_D(U)$ for the case $U=\Omega$ and the case $U=\Omega_-$, with $\Omega_-$  as defined in \eqref{eq:invert}.  (Recall that $\Omega := O^c$ is the complement of the compact obstacle $O\subset \R^n$, and recall our standing assumptions that $\Omega$ is connected and $\Gamma$ satisfies \eqref{eq:GamRest}, so that $\Gamma^c = \Omega \cup \Omega_-$.) Applying the above bound with $U=\Omega_-$ we have, in the case that $\Omega_-$ is non-empty, that
\begin{equation} \label{eq:resol}
\|R(k;\Omega_-)\|_{L^2(\Omega_-)\to L^2(\Omega_-)}=C_k(\Omega_-),
\end{equation}
where 
$$
C_k(\Omega_-) := \left\{\begin{array}{ll}
\left(\dist(k^2,\sigma(-\Delta_D(\Omega_-))\right)^{-1}, & \mbox{if  $\Omega_-$ is non-empty},\\
0, & \mbox{if $\Omega_-=\emptyset$}.\end{array}\right.
$$
In the case that $U=\Omega$ it is well known that $\spec(-\Delta_D(U))=[0,\infty)$, so that $R(k;U)$ is not defined as an operator on $L^2(U)$ for $k$ real. However (e.g., \cite[Lect.~4]{wilcox1975}, \cite{NR87}), for $k>0$, the limit \begin{equation} \label{eq:lap}
v=\lim_{\epsilon\to 0^+}R(k+\ri\epsilon;\Omega)f 
\end{equation}
is well-defined in $\Lloc(\Omega)$ for every $f\in \Lcomp(\Omega)$; indeed, $v$ is the unique solution in $\Hol_0(\Omega)$ to $\Delta v + k^2v = -f$ that satisfies the radiation condition \eqref{eq:src}. Let
$$
 B_R:=\{x\in \R^n:|x|<R\}, \quad \Omega_R:= \Omega \cap B_R,  \quad R_\Gamma := \max_{x\in \Gamma}|x|.
$$
Then, for every $R>R_\Gamma$, there exists a minimal $C_{k,R}(\Omega)>0$ such that, if $\supp(f)\subset \Omega_R$, then 
\begin{equation} \label{eq:CkR}
\|v|_{\Omega_R}\|_{L^2(\Omega_R)}\leq C_{k,R}(\Omega) \|f\|_{L^2(\Omega)},
\end{equation}
so that the resolvent $R(k;\Omega)$ is well-defined for $k>0$ by the limit \eqref{eq:lap} as a continuous mapping $\Lcomp(\Omega)\to \Lloc(\Omega)$. Thus, where $\chi_R$ denotes the characteristic function of $\Omega_R$, the {\em cut-off resolvent} $\chi_R R(k;\Omega)\chi_R$ is well-defined for $k>0$ as a continuous mapping on $L^2(\Omega)$. Indeed, the minimal $C_{k,R}(\Omega)$ in \eqref{eq:CkR} is
\begin{equation} \label{eq:CkR2}
C_{k,R}(\Omega) := \|\chi_R R(k;\Omega)\chi_R\|_{L^2(\Omega)\to L^2(\Omega)}.
\end{equation}
%the norm of the cut-off resolvent.

Our main bound on $A_{k,k}^{-1}$ is a bound in terms of these resolvent norms, $C_k(\Omega_-)$ and $C_{k,R}(\Omega)$. We use here and subsequently the notation
\begin{equation} \label{eq:Kkdef}
\Sigma(\Omega_-) := \{k>0: k^2\in \sigma(-\Delta_D(\Omega_-)\}.
\end{equation}
\begin{theorem} \label{thm:main}
Given any $k_0>0$ and $R>R_\Gamma$ there exists $C>0$ such that
$$
\|A_{k,k}^{-1}\|_k \leq Ck^2\left(C_{k,R}(\Omega) + C_k(\Omega_-)\right), \qquad k\in [k_0,\infty)\setminus \Sigma(\Omega_-),
$$
in particular
$$
\|A_{k,k}^{-1}\|_k \leq Ck^2 C_{k,R}(\Omega), \qquad k\in [k_0,\infty),
$$
if $\Gamma=O$ so that $\Omega_-=\emptyset$.
\end{theorem}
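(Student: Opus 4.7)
The plan is to establish an a priori estimate: given $\phi\in H^{-1}_\Gamma$ with $g:=A_{k,k}\phi$, bound $\|\phi\|_{H^{-1}_k(\R^n)}$ by a constant multiple of $k^2(C_{k,R}(\Omega)+C_k(\Omega_-))\|g\|_{H^1_k(\R^n)}$. The workhorse is $u:=\cA_k\phi\in\Hol(\R^n)$, which by construction satisfies $(-\Delta-k^2)u=\phi$ distributionally, solves the Helmholtz equation in $\R^n\setminus\Gamma$, and obeys the Sommerfeld condition \eqref{eq:src}. The identity $A_{k,k}\phi=g$ unpacks, via the definition of $P_k$, as $\chi u-g\in\widetilde H^1(\Gamma^c)$; this encodes that the ``trace'' of $u$ on $\Gamma$ is determined by the boundary values of $g$, and is the bridge between $u$ and $g$.

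\emph{Interior piece.} With $\chi$ chosen to equal $1$ on a neighbourhood of $O$, the inclusion $\chi u-g\in\widetilde H^1(\Gamma^c)$ forces $h_-:=(u-g)|_{\Omega_-}$, extended by zero across $\partial\Omega_-\subset\Gamma$, to lie in $\widetilde H^1(\Omega_-)\cong H^1_0(\Omega_-)$, and $h_-$ solves weakly $(-\Delta-k^2)h_-=(\Delta+k^2)g$ in $\Omega_-$. Since $k^2\notin\spec(-\Delta_D(\Omega_-))$, inversion gives $h_-=R(k;\Omega_-)[(\Delta+k^2)g]$. The crucial step is upgrading the self-adjoint bound $\|R(k;\Omega_-)\|_{L^2\to L^2}=C_k(\Omega_-)$ to a $k$-weighted Sobolev estimate $\|R(k;\Omega_-)\|_{H^{-1}_k\to H^1_k}\leq C(1+k^2 C_k(\Omega_-))$. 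This is straightforward via functional calculus in an eigenbasis of $-\Delta_D(\Omega_-)$, reducing to the elementary inequality $(\lambda+k^2)/|\lambda-k^2|\leq 1+2k^2 C_k(\Omega_-)$ for every eigenvalue $\lambda$. Combined with the continuity $\|(\Delta+k^2)g\|_{H^{-1}_k(\Omega_-)}\leq C\|g\|_{H^1_k(\R^n)}$, this yields $\|u-g\|_{H^1_k(\Omega_-)}\leq Ck^2 C_k(\Omega_-)\|g\|_{H^1_k(\R^n)}$.

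\emph{Exterior piece.} Choose a cutoff $\chi_0\in C_0^\infty(B_R)$ equal to $1$ on a neighbourhood of $O$, and set $w:=u|_\Omega-\chi_0 g$. Then $w$ has zero trace on $\partial\Omega\subset\Gamma$ (because $u-g$ does near $\Gamma$), still satisfies the Sommerfeld condition (since $\chi_0 g$ is compactly supported), and solves $(-\Delta-k^2)w=(\Delta+k^2)(\chi_0 g)=:f$ in $\Omega$, with $\supp f\subset B_R$ and $\|f\|_{H^{-1}_k(\R^n)}\leq C\|g\|_{H^1_k(\R^n)}$. By the limiting absorption principle \eqref{eq:lap}, $w$ is $R(k;\Omega)f$ interpreted in $\Hol(\Omega)$. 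Extending the cut-off resolvent bound \eqref{eq:CkR2} from an $L^2\to L^2$ estimate to one of the form $\Hmc(\Omega)\to \locsob{\Omega}$ by combining it with an energy/Green's identity argument (multiplying the Helmholtz equation by $\eta w$ for a suitable cutoff $\eta$) then yields $\|w\|_{H^1_k(\Omega_R)}\leq Ck^2 C_{k,R}(\Omega)\|g\|_{H^1_k(\R^n)}$.

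\emph{Recovery of $\phi$.} The two local estimates combine to give $\|u\|_{H^1_k(B_R)}\leq Ck^2(C_{k,R}(\Omega)+C_k(\Omega_-))\|g\|_{H^1_k(\R^n)}$. Since $\supp\phi\subset\Gamma\subset B_R$, for any $v\in H^1(\R^n)$ and any cutoff $\eta\in C_0^\infty(B_R)$ with $\eta=1$ near $\Gamma$ one has
$$
\langle\phi,v\rangle=\langle\phi,\eta v\rangle=\langle(-\Delta-k^2)u,\eta v\rangle,
$$
and Green's identity on $\R^n\setminus\Gamma$ (using compact support of $\eta v$ and $u\in\Hol(\R^n\setminus\Gamma)$) produces $|\langle\phi,v\rangle|\leq C\|u\|_{H^1_k(B_R)}\|v\|_{H^1_k(\R^n)}$. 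Dividing by $\|g\|_{H^1_k}\|v\|_{H^1_k}$ and taking supremum delivers the desired bound on $\|A_{k,k}^{-1}\|_k$; when $\Omega_-=\emptyset$ the interior step is vacuous and the bound reduces to $Ck^2 C_{k,R}(\Omega)$.

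The main obstacle is the passage from $L^2$-resolvent estimates to $H^{-1}\to H^1$ estimates with sharp $k$-dependence, particularly in the exterior case where a clean spectral calculus is not available: this requires combining the cut-off resolvent bound with elliptic energy estimates for the Helmholtz equation and careful commutator control of the cutoff $\chi_0$, and it is from these estimates that the factor $k^2$ in the conclusion originates.
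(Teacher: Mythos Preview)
Your route is genuinely different from the paper's and, I believe, can be made to work, but the description of the key exterior step is incomplete.

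\medskip
\textbf{Comparison of strategies.} Both approaches reduce to bounding $\|u\|_{H^1_k(B_R)}$ with $u=\cA_k\phi$, and both split $u$ as ``comparison function $+$ remainder with zero trace on $\Gamma$''. The paper's comparison function is $w=\cA_\lambda\mu$ with $\mu=A_{k,\lambda}^{-1}g$ and $\lambda^2=k^2+\ri\beta(k)$ (Lemma~\ref{lem:step2}); coercivity of the complex-wavenumber problem gives $\|w\|_{H^1_k}\le (C_2k^2/\beta(k))\|g\|_{H^1_k}$, and then $v:=u-\psi w$ satisfies $(\Delta+k^2)v=-h$ in $\Gamma^c$ with $h\in L^2$ (Lemma~\ref{lem:step3}), so only the \emph{$L^2\to L^2$} cut-off resolvent bound (upgraded by one factor of $k$ to $L^2\to H^1_k$) is needed. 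Your comparison function is just $\chi_0 g$, which is simpler, but the remainder $w=u-\chi_0 g$ then satisfies $(-\Delta-k^2)w=(\Delta+k^2)(\chi_0 g)$ with right-hand side only in $H^{-1}_k$, forcing you to establish an $H^{-1}_k\to H^1_k$ bound for the exterior cut-off resolvent with constant $\sim k^2 C_{k,R}(\Omega)$. The paper's complex-wavenumber device thus buys $L^2$ data and avoids this upgrade; your approach is more direct but pushes the difficulty into the resolvent estimate.

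\medskip
\textbf{The gap.} For the interior problem your spectral-calculus argument $(\lambda+k^2)/|\lambda-k^2|\le 1+2k^2C_k(\Omega_-)$ is correct and clean. For the exterior problem, however, the phrase ``combining it with an energy/Green's identity argument (multiplying by $\eta w$)'' is not enough on its own: multiplying $(-\Delta-k^2)w=f$ by $\eta^2\bar w$ and rearranging yields
\[
\|\eta w\|_{H^1_k}^2 \;\le\; 2k^2\|\eta w\|_{L^2}^2 + \|f\|_{H^{-1}_k}\|\eta^2 w\|_{H^1_k} + \mbox{(commutator)},
\]
which, after absorption, controls $\|w\|_{H^1_k(\Omega_R)}$ in terms of $\|w\|_{L^2(\Omega_{R'})}$ and $\|f\|_{H^{-1}_k}$. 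You still need $\|w\|_{L^2(\Omega_{R'})}\lesssim kC_{k,R'}(\Omega)\|f\|_{H^{-1}_k}$, and this does \emph{not} follow from the energy identity: it requires a duality argument, using that (for compactly supported $L^2$ test data $h$ and $v=R(-k;\Omega)h$) one has $(w,h)_{L^2}=\langle f,v\rangle$, whence $|(w,h)|\le \|f\|_{H^{-1}_k}\|v\|_{H^1_k(\Omega_R)}\lesssim kC_{k,R}(\Omega)\|f\|_{H^{-1}_k}\|h\|_{L^2}$. Only by combining this $H^{-1}_k\to L^2$ bound with the energy step do you recover the full $k^2C_{k,R}(\Omega)$ factor. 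This duality step works for general compact $O$ (the boundary pairing vanishes because both $w$ and $v$ lie in $H^{1,\mathrm{loc}}_0(\Omega)$, so one approximates by $C_0^\infty(\Omega)$; the contribution at infinity cancels by the opposing radiation conditions), but it is the missing ingredient in your sketch. A minor additional point: when $\Gamma$ has positive Lebesgue measure you must also account for $\|u\|_{H^1_k}$ over $\Gamma$ itself, which follows because $\chi u-g\in\widetilde H^1(\Gamma^c)$ forces $u=g$ and $\nabla u=\nabla g$ a.e.\ on $\Gamma$.
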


The above result is useful because much is known about the growth of $C_{k,R}(\Omega)$ with $k$ and how that depends on the geometry of $O$, in particular whether or not $O$ is trapping and, if $O$ is trapping, the strength of that trapping. For an overview see, e.g., \cite[\S1.1, Table 6.1]{chandler2020high}, \cite[\S1.1]{lafontaine2021most}, or \cite[\S1.2]{SiavashSimon0}; as usual (see \cite{chandler2020high} for more detail) we say that $O$ is nontrapping if, for some $R>R_\Gamma$, all billiard trajectories starting in $\Omega_R$ escape from $\Omega_R$ after some uniform time, and say that $O$ is trapping otherwise. In particular, as shown in \cite{CWMonk2008} (and see \cite[Thm.~1.2]{SiavashSimon0} for the case $n> 3$), for every $R>R_\Gamma$ and $k_0>0$ there exists $C_R>0$ such that
\begin{equation} \label{eq:nt}
C_{k,R}(\Omega) \leq C_Rk^{-1}, \qquad k\geq k_0,
\end{equation}
if the compact set $O$ is star-shaped in the following sense. (The result  in \cite{CWMonk2008} builds on classical results of Morawetz for the case where $O$ is star-shaped and smooth \cite{morawetzludwig1968,morawetz1975}.)
\begin{definition}[Star-shaped] \label{def:ss}
We say that a set $T\subset \R^n$ is {\em star-shaped} if there exists $x\in T$ such that the line segment $[x,y]:= \{tx+(1-t)y:0\leq t\leq 1\}$ is contained in $T$ for every $y\in T$.
\end{definition}
\noindent We note that the bound \eqref{eq:nt} is optimal, in the sense that a simple quasi-mode construction (see, e.g., the discussion before Lemma 3.10 in \cite{CWMonk2008}) shows that, for every compact obstacle $O$, $R>R_\Gamma$, and $k_0>0$, there exists $C_R'>0$ such that 
\begin{equation} \label{eq:nt2}
C_{k,R}(\Omega) \geq C_R'k^{-1}, \qquad k\geq k_0.
\end{equation}

Combining \eqref{eq:nt} with Theorem \ref{thm:main} and Proposition \ref{prop:Ak},  we obtain the following corollary.

\begin{corollary} \label{cor:ss}
If $\Gamma=O$ and $O$ is star-shaped then, given any $k_0>0$, there exists $C>0$ such that
$$
\|A_{k,k}^{-1}\|_k \leq C k, \quad \cond_k(A_{k,k}) \leq C k^2, \qquad k\in [k_0,\infty).
$$
\end{corollary}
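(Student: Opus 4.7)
The plan is to simply combine the three preceding results, since the hypotheses of the corollary are exactly tailored so that each input estimate is available and the exponents in $k$ conspire to give the claimed bounds.

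First I would observe that the hypothesis $\Gamma = O$ forces $\Omega_- := O \setminus \Gamma = \emptyset$, so we are in the second (stronger) case of Theorem \ref{thm:main}. Applying that theorem, for any fixed $R > R_\Gamma$ and any $k_0 > 0$, there exists $C_1 > 0$ such that
\begin{equation*}
\|A_{k,k}^{-1}\|_k \leq C_1 k^2 \, C_{k,R}(\Omega), \qquad k \geq k_0.
\end{equation*}
Next I would invoke the star-shaped resolvent bound \eqref{eq:nt}, which applies because $O$ is assumed star-shaped: there exists $C_2 > 0$ (depending on $R$ and $k_0$) such that $C_{k,R}(\Omega) \leq C_2 k^{-1}$ for $k \geq k_0$. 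Substituting this into the previous display yields
\begin{equation*}
\|A_{k,k}^{-1}\|_k \leq C_1 C_2 \, k, \qquad k \geq k_0,
\end{equation*}
which is the first claimed inequality.

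For the condition number bound, I would combine this with the upper bound $\|A_{k,k}\|_k \leq c k$ from Proposition \ref{prop:Ak}, applied with the same $k_0$. Multiplying the two estimates gives $\cond_k(A_{k,k}) = \|A_{k,k}\|_k \, \|A_{k,k}^{-1}\|_k \leq c \, C_1 C_2 \, k^2$ for $k \geq k_0$, and then taking $C := \max(C_1 C_2, c\, C_1 C_2)$ yields both inequalities with a single constant.

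There is essentially no obstacle here: the work has all been done upstream, in Theorem \ref{thm:main} (which reduces $\|A_{k,k}^{-1}\|_k$ to the cut-off resolvent $C_{k,R}(\Omega)$ plus an interior term that vanishes here), in the Morawetz-type estimate \eqref{eq:nt} (which controls $C_{k,R}(\Omega)$ for star-shaped $O$), and in Proposition \ref{prop:Ak}. The only thing to notice is that $R > R_\Gamma$ can be chosen freely, and that the constant $C$ in the statement is allowed to depend on $\Gamma$ and $k_0$, which is enough to absorb both $R$ and the constants produced by the three inputs.
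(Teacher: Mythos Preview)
Your proof is correct and follows exactly the same approach as the paper, which simply states that the corollary is obtained by combining \eqref{eq:nt} with Theorem~\ref{thm:main} and Proposition~\ref{prop:Ak}. You have merely spelled out the details of that combination.
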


A variety of other bounds on $\|A_{k,k}^{-1}\|_k $ can be obtained by combining Theorem \ref{thm:main} with one of the other known bounds for the cut-off resolvent, in particular the bounds summarised in \cite[Table 6.1]{chandler2020high}. All but one of these require that $O$ is $C^\infty$; the two exceptions are the bound of \cite{BaskinWunsch2013} for non-trapping polygons, and the bound for parabolic trapping that is \cite[Thm.~1.1]{chandler2020high}, which requires that $O$ be Lipschitz plus some additional non-trivial technical assumptions. 

We will restrict ourselves to stating two further corollaries. The first of these is obtained by combining Theorem \ref{thm:main} with the well-known worst case (for $C^\infty$ obstacles) cut-off resolvent bound of Burq \cite[Thm.~2]{Burq1998}, that, if $\Omega$ is $C^\infty$ then, for every $k_0>0$ and $R>R_\Gamma$, there exists $C_R>0$ and $\alpha>0$ such that
\begin{equation} \label{eq:worst}
C_{k,R}(\Omega) \leq C_R\re^{\alpha k}, \qquad k\geq  k_0.
\end{equation}
This exponential growth can be achieved as $k\to\infty$ through some sequence in the case of strong trapping (see, e.g., the example for $n=2$ in \cite[Eqn.~(2.20)]{BetCha11}).
Combining Theorem \ref{thm:main} with \eqref{eq:worst} and Proposition \ref{prop:Ak} we obtain the following result.
\begin{corollary} \label{cor:Cinf}
If $\Gamma=O$ and $\Omega$ is a $C^\infty$ domain, then, for every $k_0>0$, there exists $C>0$ and $\alpha>0$ such that
$$
\|A_{k,k}^{-1}\|_k \leq C\re^{\alpha k}, \quad \cond_k(A_{k,k}) \leq C\re^{\alpha k}, \qquad k\geq  k_0.
$$
\end{corollary}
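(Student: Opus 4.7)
The proof is a direct assembly of three ingredients already in place. Since $\Gamma=O$, the set $\Omega_-:= O\setminus \Gamma$ is empty, so $\Sigma(\Omega_-)=\emptyset$ and the second clause of Theorem \ref{thm:main} applies. Fixing any $R>R_\Gamma$, this yields a constant $C_1>0$ such that
$$
\|A_{k,k}^{-1}\|_k \leq C_1 k^2\, C_{k,R}(\Omega), \qquad k\geq k_0.
$$

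Next I would invoke Burq's cut-off resolvent estimate \eqref{eq:worst}, which is available precisely because $\Omega$ is $C^\infty$. It provides constants $C_R>0$ and $\alpha_0>0$, depending on $R$ and $k_0$, such that $C_{k,R}(\Omega)\leq C_R\, \re^{\alpha_0 k}$ for $k\geq k_0$. Substituting gives
$$
\|A_{k,k}^{-1}\|_k \leq C_1 C_R\, k^2 \re^{\alpha_0 k}, \qquad k\geq k_0.
$$
Since the prefactor $k^2$ grows sub-exponentially, for any $\alpha>\alpha_0$ one can absorb it into the exponential: there exists $C>0$ (depending on $k_0$, $\alpha$, $\alpha_0$) such that $C_1C_R k^2\re^{\alpha_0 k}\leq C\re^{\alpha k}$ for $k\geq k_0$. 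This delivers the first claimed bound.

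For the condition number, combine this with Proposition \ref{prop:Ak}, which gives $c>0$ with $\|A_{k,k}\|_k\leq ck$ for $k\geq k_0$. Multiplying the two bounds yields
$$
\cond_k(A_{k,k}) = \|A_{k,k}\|_k\, \|A_{k,k}^{-1}\|_k \leq cC\, k\, \re^{\alpha k}, \qquad k\geq k_0,
$$
and absorbing the linear factor into the exponential (at the cost of enlarging $\alpha$ and $C$ once more) completes the proof. There is no genuine obstacle here: the entire argument is routine bookkeeping of constants, and the only subtlety worth flagging is that the exponent $\alpha$ appearing in the statement must be chosen strictly larger than the $\alpha_0$ coming from \eqref{eq:worst} in order to absorb the polynomial prefactors $k^2$ and $k$.
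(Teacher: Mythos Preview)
Your proof is correct and follows exactly the approach the paper indicates: combine Theorem~\ref{thm:main} (in the case $\Gamma=O$, so $\Omega_-=\emptyset$) with Burq's estimate \eqref{eq:worst} and Proposition~\ref{prop:Ak}, then absorb the polynomial prefactors into the exponential by slightly enlarging $\alpha$.
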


The example for $n=2$ in \cite[Thm.~2.2]{BetCha11} can be tweaked to show that there are (strongly trapping) $C^\infty$ obstacles for which $\|A_k^{-1}\|_k$ grows exponentially as $k\to\infty$ through some sequence, so that Corollary \ref{cor:Cinf} is sharp in some sense. But if smoothness of $O$ is dropped growth can be arbitrarily worse. We prove below the following lower bound. Its proof  adapts an example  for which we prove in \cite{SiavashSimon0} arbitrarily fast increase, as $k\to\infty$ through some sequence, of  $C_{k,R}(\Omega)$.

\begin{proposition} \label{prop:lower}
Suppose that  $(k_m)_{m\in \N}$ and $(a_m)_{m\in \N}$ are increasing, unbounded, positive sequences, and that
\begin{equation} \label{eq:kconstraint}
\sum_{m=1}^\infty k^{-n}_m <\infty.
\end{equation}
Then there exists a compact obstacle $O$ (with $\Omega=O^c$ connected) such that, with $\Gamma=O$,
$$
\|A_{k_m,k_m}^{-1}\|_{k_m} \geq a_m \quad \mbox{and} \quad \cond_{k_m}(A_{k_m,k_m}) \geq a_m, \qquad m\in \N.
$$
\end{proposition}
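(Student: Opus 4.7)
The plan is to transfer the arbitrarily fast resolvent-growth example of \cite{SiavashSimon0} to a lower bound on $\|A_{k,k}^{-1}\|_k$ via a quasi-resonant scattering problem with a carefully chosen incident field. Let $(k_m)$ and $(a_m)$ be as in the statement, and introduce an auxiliary increasing unbounded positive sequence $(b_m)$ to be fixed below. The growth condition \eqref{eq:kconstraint} is precisely what is needed to invoke \cite{SiavashSimon0}, which yields a compact obstacle $O\subset \R^n$ with connected complement $\Omega = O^c$, having non-empty interior, and a radius $R>R_\Gamma$, such that $C_{k_m,R}(\Omega) \geq b_m$ for every $m$. I set $\Gamma = O$, so that $\Omega_- = \emptyset$ and $A_{k,k}$ is invertible for every $k>0$ by \eqref{eq:invert}; Proposition~\ref{prop:Ak2} then yields $\|A_{k,k}\|_k \geq c'k_0$ for all $k\geq k_0$.

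For each $m$, pick $f_m\in L^2(\Omega)$ with $\|f_m\|_{L^2}=1$ and $\supp(f_m)\subset \Omega_R$ (which may, by density, be assumed to lie at positive distance from $\partial O$) such that $\|R(k_m;\Omega) f_m|_{\Omega_R}\|_{L^2}\geq b_m/2$. Define the incident field $u_m^i:=\mathcal{A}_{k_m} f_m$; it satisfies $(\Delta+k_m^2)u_m^i = -f_m$ in $\R^n$ together with the Sommerfeld radiation condition, and solves the Helmholtz equation in an open neighbourhood of $O$, so it is an admissible incident field. Let $\phi_m\in H^{-1}_\Gamma$ solve $A_{k_m}\phi_m = g_m$, where $g_m$ is the $P_{k_m}$-projection of $-\chi u_m^i$, and set $u_m := \mathcal{A}_{k_m}\phi_m$. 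Then the total field $u_m^t := u_m^i + u_m$ satisfies $(\Delta+k_m^2)u_m^t = -f_m$ in $\Omega$ with vanishing Dirichlet trace on $\partial O$ and the radiation condition, hence $u_m^t = R(k_m;\Omega)f_m$ on $\Omega$. Consequently,
\begin{equation*}
\|u_m|_{\Omega_R}\|_{L^2} \;\geq\; \|u_m^t|_{\Omega_R}\|_{L^2} - \|u_m^i|_{\Omega_R}\|_{L^2} \;\geq\; b_m/2 - C_1 k_m^{\alpha_1},
\end{equation*}
where the bound on the volume potential $u_m^i$ restricted to $\Omega_R$ follows from Young's inequality and the pointwise estimates on the convolution kernel $\Phi_{k_m}$.

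Combining this with the polynomial bounds $\|g_m\|_{H^1_{k_m}(\R^n)} \leq C_2 k_m^{\alpha_2}$ (from the mapping $f \mapsto \chi\mathcal{A}_k f$ between $L^2_{\mathrm{comp}}$ and the $k$-weighted Sobolev norms) and $\|u_m|_{\Omega_R}\|_{L^2} \leq C_3 k_m^{\alpha_3}\|\phi_m\|_{H^{-1}_{k_m}(\R^n)}$ (from the continuity, after cut-off, of $\mathcal{A}_k:H^{-1}_\Gamma\to H^1(\R^n)$), I obtain
\begin{equation*}
\|A_{k_m,k_m}^{-1}\|_{k_m} \;\geq\; \frac{\|\phi_m\|_{H^{-1}_{k_m}(\R^n)}}{\|g_m\|_{H^1_{k_m}(\R^n)}} \;\geq\; \frac{b_m/2 - C_1 k_m^{\alpha_1}}{C_2 C_3\,k_m^{\alpha_2+\alpha_3}}.
\end{equation*}
Since $(b_m)$ is freely at our disposal in \cite{SiavashSimon0}, I choose it so that the right-hand side dominates $a_m/(c'k_0)$; concretely, take $b_m := 4C_1 k_m^{\alpha_1} + (4C_2 C_3/(c'k_0))\, a_m k_m^{\alpha_2+\alpha_3}$ and replace by the running maximum to enforce monotonicity. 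Then $\|A_{k_m,k_m}^{-1}\|_{k_m}\geq a_m/(c'k_0)$, and Proposition~\ref{prop:Ak2} gives $\cond_{k_m}(A_{k_m,k_m}) = \|A_{k_m,k_m}\|_{k_m}\,\|A_{k_m,k_m}^{-1}\|_{k_m} \geq a_m$, as required.

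The main obstacle is the identification at the second step---verifying, within the functional-analytic framework of \S\ref{sec:IE}, that the integral-equation solution $\phi_m$ associated to the particular incident field $u_m^i = \mathcal{A}_{k_m}f_m$ yields a scattered field $\mathcal{A}_{k_m}\phi_m$ whose sum with $u_m^i$ agrees in $\Omega$ with the exterior-Dirichlet resolvent $R(k_m;\Omega) f_m$---together with a confirmation that the obstacles produced in \cite{SiavashSimon0} can be arranged (possibly after a mild thickening) to have non-empty interior. The remaining steps are routine polynomial mapping estimates for $\mathcal{A}_k$ and for multiplication by $\chi$ between the $k$-weighted Sobolev spaces, whose $k$-growth is absorbed into the freely chosen sequence $(b_m)$.
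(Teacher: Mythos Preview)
Your approach is essentially the same as the paper's: both use the resolvent-growth construction of \cite{SiavashSimon0}, take $\Gamma=O$, feed a near-maximiser $f_m$ for $C_{k_m,R}(\Omega)$ through the Newtonian potential to manufacture data $g_m=-P_{k_m}(\chi\cA_{k_m}f_m)$, and exploit that the resulting $\phi_m$ recovers (via $\cA_{k_m}$) the large resolvent. The paper defines $\phi_m$ directly as $-(\Delta+k_m^2)(u_m-\cA_{k_m}f_m)$ and then checks $A_{k_m,k_m}\phi_m=g_m$; you go the other way, solving the IE and identifying the total field with $R(k_m;\Omega)f_m$ by uniqueness. These are equivalent.

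There is, however, a genuine order-of-dependencies problem in your write-up. You choose $(b_m)$ in terms of constants $C_1,C_2,C_3,c'$ that depend on $R$ and on the obstacle $O$ --- but $O$ is produced by \cite{SiavashSimon0} \emph{after} $(b_m)$ is specified, so the argument is circular as written. The paper breaks this circularity by observing that \cite[Thm.~1.4]{SiavashSimon0} gives a uniform bound $R_1$ on the obstacle diameter depending only on $(k_m)$, \emph{before} the target sequence is chosen. One then fixes $R>4R_1$ and $\chi$, which fixes the mapping constants for $\cA_k$ (these depend only on $R$, $\chi$, $k_0$, not on $O$), adjoins a ball of radius $R_1$ to force non-empty interior (so that the constant $c'$ from Proposition~\ref{prop:Ak2} also depends only on $R_1,k_0$), and only then defines the auxiliary sequence and invokes \cite{SiavashSimon0} to produce $O_1$. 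Your ``mild thickening'' remark and the polynomial bounds with unspecified $\alpha_i$ are fine in spirit, but you need to make explicit that all relevant constants can be pinned down independently of $(b_m)$ before $(b_m)$ is chosen.
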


Our final corollary holds for every compact obstacle $O$ and shows that, in every case, in contrast to Corollary \ref{cor:Cinf} and Proposition \ref{prop:lower}, growth of $\|A_k^{-1}\|_k$ is at worst polynomial in $k$ if a set of wavenumbers of arbitrarily small measure is excluded. Its proof uses the following resolvent estimates proved in our companion paper \cite{SiavashSimon1}. The first estimate  is that, if $\Omega_-$ is non-empty, then, for every $\epsilon>0$ and $k_0>0$, there exists $C>0$ and $E\subset [k_0,\infty)$, with Lebesgue measure $m(E)\leq \epsilon$, such that
\begin{align} \nonumber
\|R(k;\Omega_-)\|_{L^2(\Omega_-)\to L^2(\Omega_-)}&= C_k(\Omega_-)= \left(\dist(k^2,\sigma(-\Delta_D(\Omega_-))\right)^{-1}\\ \label{eq:bounds1}
&\leq C(1+\log^2(k))k^{n-1}, \qquad k\in [k_0,\infty)\setminus E.
\end{align}
Of course the set $E$ in the above inequality must contain a neighbourhood of $\Sigma(\Omega_-)$. This first estimate follows (for details see \cite{SiavashSimon1}) from the Weyl asymptotics for the eigenvalues of $-\Delta_D(\Omega_-)$, recalled in \cite[Thm.~3.3.1, Eqn.~(3.3.4)]{levitin2023topics}.

The second (and rather similar) estimate is that, for every $\epsilon>0$, $\delta>0$, $R>R_\Gamma$, and $k_0>0$, there exists $C>0$ and $E\subset [k_0,\infty)$, with Lebesgue measure $m(E)\leq \epsilon$, such that
\begin{equation} \label{eq:bounds2}
C_{k,R}(\Omega) \leq Ck^{2n+\delta}, \qquad k\in [k_0,\infty)\setminus E.
\end{equation}
This second estimate is a refinement of the main result (Theorem 1.1) of \cite{lafontaine2021most}; our refinement in \cite{SiavashSimon1} is to prove the bound \eqref{eq:bounds2} for arbitrary compact $O$ and to sharpen the exponent (the proof in \cite{lafontaine2021most} establishes \eqref{eq:bounds2} only for Lipschitz $O$, and with our exponent $2n+\delta$ replaced by the larger exponent $5n/2+\delta$).   
Clearly, combining Theorem \ref{thm:main} with \eqref{eq:bounds1} and \eqref{eq:bounds2} and Proposition \ref{prop:Ak} we obtain the following corollary. 
%(cf.~\cite[Cor.~2.8]{lafontaine2021most}\footnote{This result, \cite[Cor.~2.8]{lafontaine2021most}, is an analogous bound  on the norm of the inverse of the BIO in the standard 2nd kind BIE for the sound-soft scattering problem when $\Omega$ is Lipschitz, but note that $C''k^{5n/2+1/2+\eps}$ in this bound should read $C'' k^{5n/2+3/2+\eps}$ (E.~A.~Spence, private communication).}). 
We emphasise that this result holds for every compact $O\subset \R^n$ (with $\Omega=O^c$ connected) and for every compact set $\Gamma$ satisfying \eqref{eq:GamRest}.

\begin{corollary} \label{cor:gen}
For every $\epsilon>0$, $\delta>0$, and $k_0>0$, there exists $C>0$ and $E\subset [k_0,\infty)$, with Lebesgue measure $m(E)\leq \epsilon$, such that
\begin{equation} \label{eq:bounds3}
\|A_{k,k}^{-1}\|_{k}\leq Ck^{2n+2+\delta}, \quad \cond_k(A_{k,k}) \leq Ck^{2n+3+\delta},\qquad k\in [k_0,\infty)\setminus E.
\end{equation}
\end{corollary}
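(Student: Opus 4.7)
The plan is to combine Theorem \ref{thm:main}, the two resolvent estimates \eqref{eq:bounds1} and \eqref{eq:bounds2}, and the upper bound on $\|A_{k,k}\|_k$ from Proposition \ref{prop:Ak}, taking the union of the small exceptional sets arising from \eqref{eq:bounds1} and \eqref{eq:bounds2}. This is essentially a bookkeeping exercise, as the preamble to the corollary already asserts.

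Fix $\eps, \delta, k_0 > 0$ and choose any $R > R_\Gamma$. I would first apply \eqref{eq:bounds2} with exponent shift $\delta/2$ in place of $\delta$ and tolerance $\eps/2$, obtaining a constant $C_1$ and a set $E_1 \subset [k_0,\infty)$ with $m(E_1) \leq \eps/2$ such that $C_{k,R}(\Omega) \leq C_1 k^{2n+\delta/2}$ on $[k_0,\infty) \setminus E_1$. Separately, assuming $\Omega_-$ is non-empty, apply \eqref{eq:bounds1} with tolerance $\eps/2$ to get $C_2$ and $E_2 \subset [k_0,\infty)$ with $m(E_2) \leq \eps/2$ such that $C_k(\Omega_-) \leq C_2(1+\log^2 k) k^{n-1}$ on $[k_0,\infty) \setminus E_2$ (if $\Omega_- = \emptyset$, set $E_2 = \emptyset$ and $C_k(\Omega_-) = 0$). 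Set $E := E_1 \cup E_2$; then $m(E) \leq \eps$, and since $E_2$ must contain a neighbourhood of $\Sigma(\Omega_-)$, the complement $[k_0,\infty)\setminus E$ is disjoint from $\Sigma(\Omega_-)$ and Theorem \ref{thm:main} is applicable throughout.

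For $k \in [k_0, \infty) \setminus E$, both resolvent bounds hold. Since $n - 1 < 2n + \delta/2$ for $n \geq 2$ and $\log^2 k \leq C_\delta k^{\delta/2}$ for $k \geq k_0$, the combined bound reads $C_{k,R}(\Omega) + C_k(\Omega_-) \leq C_3 k^{2n+\delta}$ for an appropriate $C_3 > 0$. Theorem \ref{thm:main} then yields
\[
\|A_{k,k}^{-1}\|_k \leq C\, k^2 \bigl(C_{k,R}(\Omega) + C_k(\Omega_-)\bigr) \leq C_4\, k^{2n+2+\delta}, \qquad k \in [k_0,\infty)\setminus E,
\]
which is the first inequality of \eqref{eq:bounds3}. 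Multiplying by the bound $\|A_{k,k}\|_k \leq c k$ from Proposition \ref{prop:Ak} gives the condition number bound $\cond_k(A_{k,k}) = \|A_{k,k}\|_k \|A_{k,k}^{-1}\|_k \leq C_5 k^{2n+3+\delta}$ on the same set.

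There is no real conceptual obstacle, and indeed no step beyond combining already-established results. The only minor care needed is in the treatment of the logarithmic factor $(1+\log^2 k)$ from \eqref{eq:bounds1} and in matching exponents: both are handled by spending a small amount of the $\delta$-budget up front, so that the final exponent is exactly $2n+2+\delta$ (respectively $2n+3+\delta$) as stated.
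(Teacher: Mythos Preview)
Your proposal is correct and follows exactly the approach the paper takes: the paper simply states that the corollary follows by combining Theorem~\ref{thm:main} with \eqref{eq:bounds1}, \eqref{eq:bounds2}, and Proposition~\ref{prop:Ak}, and your argument carries out precisely this combination with appropriate care over the $\eps/2$ splitting of the exceptional sets and the absorption of the logarithmic factor into the $\delta$-budget.
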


\subsection{Significance and relationship to previous work} \label{sec:sig}

The integral equations that appear in the studies cited in \S\ref{sec:intro}, and our own IE \eqref{eq:iemain}, take the form
\begin{equation} \label{eq:gen}
B_k \phi = g,
\end{equation}
where $B_k:\cH\to \cH^*$  is a continuous mapping from a Hilbert space $\cH$ to its dual space $\cH^*$. In each case $\cH$ is a space of functions defined on $\Gamma$, a compact set satisfying \eqref{eq:GamRest}, with $\Gamma:= \partial \Omega$ in the usual case that the IE is a BIE. In \eqref{eq:gen}, $g\in \cH^*$ is some known data and $\phi$ is the solution to be determined. 

One motivation for estimating $\|B_k\|$ and $\|B_k^{-1}\|$ -- the focus of this paper, and of many of the papers cited in \S\ref{sec:intro} -- is that this leads to estimates for the condition number $\cond(B_k):= \|B_k\|\|B_k^{-1}\|$, and this, by standard estimates for linear operators (e.g., \cite[\S2.7.2]{GVL}) leads to estimates for the perturbation in $\phi$ induced by perturbations in $g$ or the operator $B_k$, this relevant, for  example, to the study of uncertainty quantification (e.g.,  \cite{UQ2,UQ1}). For example, in the simplest case that $g$ is replaced by perturbed data $g^\delta$ in \eqref{eq:gen}, it is elementary that the corresponding perturbed solution $\phi^\delta$ satisfies
$$
\frac{\|\phi-\phi^\delta\|_\cH}{\|\phi\|_\cH} \leq \cond(B_k) \frac{\|g-g^\delta\|_{\cH^*}}{\|g\|_{\cH^*}}.
$$ 

To appreciate the relevance of estimates for $\cond(B_k)$ to the numerical analysis of \eqref{eq:gen}, assume $k>0$ is such that $B_k$ is invertible and consider solution of \eqref{eq:gen} by a least squares method in which one seeks an approximation $\phi_N\in \cH_N$ from some $N$-dimensional subspace of $\cH$, and chooses $\phi_N$ so as to minimise $\|B_k\phi_N-g\|_{\cH^*}$. Equivalently (e.g., \cite[\S3.2.1]{Kirsch}), one chooses $\phi_N \in \cH_N$ so as to solve the variational problem
\begin{equation} \label{eq:ls}
b_k(\phi_N,\psi_N)=(g,B_k\psi_N)_{\cH^*}, \qquad \forall \psi_N\in \cH_N,
\end{equation}
where $(\cdot,\cdot)_{\cH^*}$ is the inner product on $\cH^*$ and $b_k(\cdot,\cdot)$ is the least squares sesquilinear form given by
$$
b_k(\chi,\psi):=(B_k \chi,B_k\psi)_{\cH^*}, \qquad  \chi, \psi\in \cH.
$$
Trivially, $b_k(\cdot,\cdot)$ is symmetric (i.e., $b_k(\chi,\psi)=\overline{b_k(\psi,\chi)}$, for $\chi,\psi\in \cH$), and is continuous and coercive with continuity and coercivity constants $\|B_k\|^2$ and $\|B_k^{-1}\|^{-2}$, i.e.
$$
|b_k(\chi,\psi)| \leq \|B_k\|^2 \|\chi\|_\cH \|\psi\|_{\cH}, \quad |b_k(\psi,\psi)| \geq \|B_k^{-1}\|^{-2}\|\psi\|_{\cH}^2 \qquad \forall \chi, \psi\in \cH.
$$
Thus, by a standard refinement of C\'ea's lemma in the symmetric case (e.g., \cite[(2.8.5), Rem.~2]{BrSc:08}),
$$
\|\phi -\phi_N\|_\cH \leq \cond(B_k) e_N(\phi), \quad \mbox{where} \quad e_N(\phi):= \min_{\psi_N\in \cH_N} \|\phi-\psi_N\|_\cH
$$
is the error in best approximation of $\phi$ from the subspace $\cH_N$. 

To ensure that the error remains small as $k$ increases it is sufficient to increase $N$ sufficiently rapidly (and usually one also chooses $\cH_N$ to depend on $k$) to ensure that $e_N(\phi)$ decreases at least as fast as $(\cond(B_k))^{-1}$. Thus, understanding how $\cond(B_k)$ grows with $k$ is key to the design of a discretisation space $\cH_N$ that will ensure an accurate solution. 

In the 2D case ($n=2$) it is known, for certain geometries of $O$ and choices of $\cH$ (e.g., \cite{HeLaMe:13,ChHeLaTw:15,HeLaCh:15}), how to choose $\cH_N$ dependent on $k$ so that, for every $k_0>0$ and $N_0\in \N$ there exists $c,C,m>0$ such that
$$
e_N(\phi) \leq Ck^m \re^{-cN^{1/2}}, \qquad k\geq k_0, \quad N\geq N_0.
$$
If such an estimate holds, and as long as $\cond(B_k)$ increases at most at a polynomial rate with $k$ (as implied for $\cond_k(A_{k,k})$, outside some set $E$ of small measure, by Proposition \ref{prop:Ak} and Corollary \ref{cor:gen}), it is enough for $N$ to increase in proportion to $\log^2 k$, to ensure, for any given $\epsilon>0$, that $\|\phi -\phi_N\|_\cH\leq \epsilon$ for $k\geq k_0$.   See L\"ohndorf and Melenk \cite{LoMe:11} and the discussions in \cite{chandler2020high,lafontaine2021most} for other, related motivations for establishing a growth of $\cond(B_k)$ with $k$ that is at most polynomial.

Note that understanding conditioning, and its dependence on $k$ and the geometry of $O$, is also important for understanding the convergence of the iterative solvers that are needed to solve the large dense linear systems that typically arise in the numerical solutions of BIE formulations (that all take the form \eqref{eq:gen}) for obstacle scattering problems (see, e.g., \cite{Gr:97,AnDa:21,MaGaSpSp:22}, \cite[Thm.~4.11]{ChSp:24}). (The relevant solvers are the conjugate gradient method and MINRES in the case (as for discretisations of \eqref{eq:ls}) that the matrix is symmetric and positive definite, or GMRES for general complex matrices; see \cite{Gr:97} for specifications of these algorithms.)

The above provides motivation for the results that we have described in \S\ref{sec:main}. Regarding the novelty of our results compared to previous studies of wavenumber dependence of norms and condition numbers of integral operators, and the relationship of this paper to previous work, we make the following remarks.

\begin{remark}[\bf 1st vs.~2nd kind IEs]
This paper is concerned with the $k$-dependence of conditioning of operators arising in first kind IEs. The vast majority of the papers cited in \S\ref{sec:intro} deal with second kind BIE formulations of either the Dirichlet \cite{KrSp83,Kr85,Am90,ChHe:06,BaSa2007,DomGra07,CWMonk2008,ChGrLaLi:09,SpeCha11,BetCha11,chandler2012numerical,Me:12,spence2014wavenumber,HaTa:15,SpKaSm:15,baskin2016sharp,GaSp2019,chandler2020high,lafontaine2021most} or the Neumann \cite{BoTu2013,GaMaSpe2022}  scattering problem. In each of these papers the obstacle $O$ is Lipschitz or smoother, and in each case the integral operator acts on the relatively simple Hilbert space $\cH = L^2(\Gamma)$, where $\Gamma=\partial \Omega$.
%, enabling use of Rellich identities and bounds on DtN and NtD maps. 

Importantly, there are theoretical and computational reasons for preferring first kind IEs, as noted, for example, in \cite{BufSau07}, namely that the associated operators are compact perturbations of coercive operators, as we exhibit, even for general compact obstacles, in Proposition \ref{prop:coer} below.
Thus, by a standard generalisation of C\'ea's lemma (e.g., \cite[Thm.~2.3]{CWSpence22}), all Galerkin methods based on asymptotically dense sequences of subspaces are convergent. By contrast, the operators in the standard second kind IEs are not compact perturbations of coercive operators for general Lipschitz domains, or even for all polyhedra in 3D \cite{CWSpence22}, so that Galerkin methods, in particular based on standard boundary element approximation spaces, need not be convergent \cite[Thm.~1.4]{CWSpence22}.
\end{remark}

\begin{remark}[\bf Previous work on 1st kind IEs]
The only previous $k$-explicit bounds on condition numbers  for first kind IEs are those of Chandler-Wilde and Hewett \cite{CWHewett2015}, building on work of Ha Duong \cite{Ha-Du:90,Ha-Du:92}.
% and that of our own companion paper \cite{SiavashSimon1}. 
The paper \cite{CWHewett2015} considers first kind BIEs in fractional order Sobolev spaces on $\Gamma=O$ for both the Dirichlet and Neumann problems (for the Dirichlet case the BIO is the operator $S_k$ of \eqref{eq:Sk}),  but for the special case that $O$ is a flat screen, meaning that $O$ is contained in some hyperplane in $\R^n$. In that case the BIOs are convolution operators so that Fourier analysis methods can be used to estimate norms.  (A similar, but only partly rigorous, Fourier analysis is carried out for a modified version of the single-layer BIO $S_k$ in the case that $\Gamma$ is a sphere in \cite{BufSau07}.)
\end{remark}

\begin{remark}[\bf Generality of the geometry in previous results] In contrast to all previous studies of $k$-dependence, the obstacle $O$ in this paper can be any compact subset of $\R^n$. For example, $O$ might be a fractal,  with fractal dimension $d$ taking any value in $[0,n]$, see \S\ref{sec:dset} for more detail (though the case $d<n-2$ is uninteresting as the scattered field is  zero; see  \cite[Rem.~3.5]{caetano2025integral}).  All previous studies of $k$-dependence assume that $\Omega$ is Lipschitz or smoother, with the IE a BIE posed on $\Gamma=\partial \Omega$ which has integer dimension $d=n-1$, or assume \cite{Ha-Du:90,Ha-Du:92,CWHewett2015} that the integral equation is posed on $\Gamma=O$, where $O$ is the closure of some relatively open subset of a hyperplane in $\R^n$ (so that, again, $\Gamma$ has dimension $d=n-1$).
\end{remark}

\begin{remark}[\bf Failure of invertibility for all $k>0$] This is the first paper to derive wavenumber-explicit estimates for the inverse of an integral operator $B_k$ in a case where $B_k$ is not invertible for all $k>0$. This introduces a new element; clearly $\|B_k^{-1}\|$ and $\cond(B_k)$ must blow up at values of $k>0$ where $B_k$ is not invertible.
\end{remark}

\begin{remark}[\bf Results similar to \S\ref{sec:main} for 2nd kind BIEs]
The BIO in the standard 2nd kind BIE on $\Gamma = \partial O$ for the sound-soft scattering problem when $\Omega$ is Lipschitz is $B_k=\frac{1}{2}I+D_k-\ri k S_k$, where $D_k$ is the standard acoustic double-layer BIO. It is shown as \cite[Lemma 6.2]{chandler2020high} (a result that encapsulates arguments in Spence \cite{spence2014wavenumber} for special classes of Lipschitz $\Omega$) that, for every $k_0>0$ there exists $C>0$ such that 
\begin{equation} \label{eq:spence}
\|B_k^{-1}\|_{L^2}\leq Ck^{3/2}C_{k,R}(\Omega), \qquad k\geq k_0,
\end{equation}
where $\|B_k^{-1}\|_{L^2}$ denotes the norm of $B_k^{-1}$ as an operator on $L^2(\Gamma)$. This result, analogous to our Theorem \ref{thm:main}, is proved via bounds on the exterior Dirichlet to Neumann (DtN) map and the interior impedance to Dirichlet map on $\Gamma$, using a representation for $B_k^{-1}$ in terms of these maps \cite[Thm.~2.33]{chandler2012numerical}. In turn, the norm of the DtN map can be bounded in terms of $C_{k,R}(\Omega)$\cite{spence2014wavenumber,chandler2020high}, in part using a $k$-explicit Rellich lemma. None of these tools are available if $\Omega$ is not Lipschitz. But we borrow a key component of the arguments used to prove the DtN bound in \cite[\S3]{spence2014wavenumber} in our proof of Theorem \ref{thm:main}, as detailed at the beginning of \S\ref{sec:Ainv} and in Remark \ref{rem:spence}. 

Just as we have derived corollaries of Theorem \ref{thm:main} in \S\ref{sec:main}, bounds on $\|B_k^{-1}\|_{L_2}$ have been derived by combining \eqref{eq:spence} with known bounds on $C_{k,R}(\Omega)$ for particular geometries of $\Omega$, many of these summarised in \cite[Table 6.1]{chandler2020high}. In particular, a result analogous to Corollary \ref{cor:gen}, proved as \cite[Cor.~2.8]{lafontaine2021most} (with a correction in E.~A.~Spence, private communication), and obtained by combining \eqref{eq:spence} with the bound on $C_{k,R}(\Omega)$ that is \cite[Thm.~1.1]{lafontaine2021most}, is that, for every $\epsilon>0$, $\delta>0$, and $k_0>0$, there exists $C>0$ and $E\subset [k_0,\infty)$, with Lebesgue measure $m(E)\leq \epsilon$, such that 
$$
\|B_k^{-1}\|_{L^2} \leq C k^{5n/2+3/2+\delta}, \qquad k\in [k_0,\infty)\setminus E.
$$
As noted in \cite{SiavashSimon1}, the exponent $5n/2+3/2+\delta$ in the above bound can be reduced to $2n+3/2+\delta$ by combining \eqref{eq:spence} with \eqref{eq:bounds2}, the sharper bound on $C_{k,R}(\Omega)$ shown in \cite{SiavashSimon1}.
\end{remark}

%\blue{We note that in our own companion paper  in preparation \cite{SiavashSimon1} we consider IE methods for the same Dirichlet problem as this paper, but for the special case when $\Omega$ is Lipschitz, bounding the condition number of $S_k:H^{-1/2+s}(\Gamma)\to H^{1/2+s}(\Gamma)$, for $|s|\leq 1/2$, in the case that $\Gamma=\partial \Omega = \partial O$, via bounds on exterior and interior DtN maps derived using Rellich identities, arguments that need Lipschitz $\Omega$. Using these methods adapted to Lipschitz $\Omega$ we obtain in \cite{SiavashSimon1} bounds on $\|S_k^{-1}\|$ that are sharper, by a power $k^2$, than the bounds that can be obtained (for the case $s=0$) by combining Corollary \ref{thm:Sk0} below with the bounds on $\|A_{k,k}^{-1}\|_k$ of \S\ref{sec:main}. But these methods do not extend beyond the Lipschitz case, in particular cannot reproduce the bounds on $\|S_k^{-1}\|$ of Corollary \ref{thm:Sk} below for the case when $\Gamma$ is a screen.} 

\begin{remark}[\bf Work on numerical solution of \eqref{eq:iemain}] While the motivation for this paper comes in large part from  numerical analysis, we defer further discussion of numerical solution of \eqref{eq:iemain}, by the least squares method of \eqref{eq:ls} or otherwise, to a future paper. But note that numerical solution of  \eqref{eq:iemain} is certainly feasible for complex obstacles, including large classes of fractal obstacles. Indeed, a fully discrete Galerkin method for numerical solution of \eqref{eq:iemain} is described and implemented in \cite{caetano2025integral} for fractals with dimension $d\in (n-2,n]$ that are attractors of iterated function systems of contracting similarities in the sense of \cite[\S2(a)]{caetano2025integral} or \cite[\S9.2]{Falconer2014} (and so are $d$-sets in the sense of \eqref{eq:dset} below). The paper \cite{caetano2025integral} provides a numerical analysis of this Galerkin scheme, but without any consideration of dependence on $k$. The results of this paper should provide the first steps towards a $k$-explicit numerical analysis.
\end{remark}

%Let us say a few words about the methods that we use to obtain our results and their provenance. The starting point for our analysis is the recent paper of Caetano et al \cite{caetano2025integral} that proposes the first kind IE formulation \eqref{ie:main} and establishes its well-posedness for $k^2\not\in \sigma(-\Delta_D(\Omega_-))$, but we need to build on that ana 

\subsection{Implications for acoustic single-layer BIOs} \label{sec:slp}

As alluded to in \S\ref{sec:intro}, in the case that $\Omega$ is Lipschitz the scattering problem can  be reformulated as the well-known first kind IE (e.g., \cite[\S3.2.1]{Ned}, \cite[Thm.~9.11]{mclean2000strongly}, \cite[Eqn.~(2.63]{chandler2012numerical})
\begin{equation} \label{eq:first}
S_k\phi=g 
\end{equation}
on $\Gamma=\partial \Omega = \partial O$; here $g:=-u^i|_\Gamma\in H^{1/2}(\Gamma)$ and $S_k:H^{-1/2}(\Gamma)\to H^{1/2}(\Gamma)$ is the acoustic single-layer BIO; explicitly,
\begin{equation} \label{eq:Sk}
S_k\psi(x) := \int_{\Gamma}\Phi_k(x,y)\psi(y)\,ds(y), \qquad x\in \Gamma,
\end{equation}
in the case that $\psi\in L^2(\Gamma)$. (Our boundary Sobolev spaces are as defined in \S\ref{sec:boundary} below.)
The IE \eqref{eq:first} has a solution $\phi\in H^{-1/2}(\Gamma)$ for all $k>0$, and $u=\cS_k \phi\in \Hol(\R^n)$, a single-layer potential with the density $\phi$, is the unique solution to the scattering problem (e.g., \cite[Thm.~9.11]{mclean2000strongly}). Essentially the same IE applies when $\Gamma=O$ is an infinitely thin screen with a sufficiently smooth boundary \cite{Stephan1987}.

Our bounds on the norms of $A_k$ and $A_k^{-1}$ imply wavenumber-explicit bounds on the norms of $S_k$ and $S_k^{-1}$ in the case that $\Gamma$ is the boundary of a Lipschitz domain, is a screen in the sense of \cite{Stephan1987}, or a multi-screen in the sense of \cite{ClaeysHiptmair2013}, via results, in particular the norm-equivalence \eqref{eq:bAknorm}, that apply to a version of the integral operator $S_k$ (denoted $\bA_k$, and see \eqref{eq:bAk2} below) that apply whenever $\Gamma$ is a $d$-set, for some $n-2<d\leq n$, in the sense of \eqref{eq:dset} below; these results are discussed in \S\ref{sec:dset} below. In each of these cases $S_k$ is invertible if and only if $A_k$ is invertible, i.e., if and only if $k\not\in \Sigma(\Omega_-)$. In the Lipschitz case we have the following estimates.

\begin{corollary} \label{thm:Sk0} 
Suppose that $O$ is the closure of a Lipschitz domain and that $\Gamma=\partial O$. Then, for every $k_0>0$ there exists $c,C>0$ such that the norm of $S_k:H^{-1/2}(\Gamma)\to H^{1/2}(\Gamma)$ is bounded by
$$
\|S_k\| \leq c\|A_{k,k}\|_k \leq Ck, \qquad k\geq k_0,
$$
and the norm of its inverse by
$$
\|S_k^{-1}\| \leq Ck^2\|A_{k,k}^{-1}\|_k, \qquad k\in [k_0,\infty)\setminus \Sigma(\Omega_-).
$$
\end{corollary}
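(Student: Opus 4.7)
The plan is to reduce these bounds on $S_k$ to the bounds on $A_{k,k}$ already asserted in Proposition \ref{prop:Ak} and Theorem \ref{thm:main} via a norm-equivalence between the two operators, exploiting the fact that when $\Gamma=\partial O$ is the boundary of a Lipschitz domain, $\Gamma$ is a $d$-set with $d=n-1$. The norm-equivalence \eqref{eq:bAknorm} (to be established in \S\ref{sec:dset} for general $d$-sets) relates $A_{k,k}$ to the $d$-set integral operator $\bA_k$, and in the Lipschitz case $\bA_k$ coincides (up to identification of function spaces) with the classical single-layer BIO $S_k$, since the $(n-1)$-dimensional Hausdorff measure on $\Gamma$ is comparable to surface measure. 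The two claimed inequalities are then essentially translations of the corresponding inequalities for $A_{k,k}$ through the relevant isomorphisms.

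Concretely, I would set up an isomorphism $\iota:H^{-1/2}(\Gamma)\to H^{-1}_\Gamma$ realising distributions on $\Gamma$ as $H^{-1}(\R^n)$-distributions supported on $\Gamma$, together with the dual identification between $(H^{-1}_\Gamma)^*$ (realised as $R(P_k)\subset H^1(\R^n)$) and $H^{1/2}(\Gamma)$ obtained by restriction to $\Gamma$ via the trace operator $\gamma:H^1(\R^n)\to H^{1/2}(\Gamma)$. Once these identifications are in place, one checks the factorisation $S_k = \gamma\circ A_{k,k}\circ \iota$, so that $\|S_k\|$ is controlled by $\|\gamma\|\,\|A_{k,k}\|_k\,\|\iota\|$ and $\|S_k^{-1}\|$ by $\|\iota^{-1}\|\,\|A_{k,k}^{-1}\|_k\,\|\gamma^{-1}\|$, where the inverses here refer to the corresponding continuous inverses between $H^{\pm 1/2}(\Gamma)$ and the relevant closed subspaces of $H^{\pm 1}(\R^n)$ equipped with the $k$-dependent norms \eqref{eq:knorm}. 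Invertibility transfers via \eqref{eq:invert} since the relationship of $S_k$ to $\sigma(-\Delta_D(\Omega_-))$ is the classical one. The asymmetric factor of $k^2$ in the second bound is expected to come precisely from tracking how these embedding/lifting constants between $H^{\pm 1}_k(\R^n)$ and the unweighted boundary spaces $H^{\pm 1/2}(\Gamma)$ scale with $k$: in one direction they are uniformly bounded, while in the other they pick up a factor of $k^{1/2}$ at each trace, giving $k$ total at each of $\iota^{-1}$ and $\gamma^{-1}$.

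The first inequality then follows by substituting the bound $\|A_{k,k}\|_k\leq ck$ from Proposition \ref{prop:Ak}, and the second by substituting the bound of Theorem \ref{thm:main} (with the $k^2$ factor absorbed into the statement). The main obstacle I anticipate is the clean bookkeeping of $k$-powers in the norm-equivalence \eqref{eq:bAknorm} and in the trace/lifting isomorphisms $\gamma$, $\gamma^{-1}$, $\iota$, $\iota^{-1}$, so as to ensure that the constants $c,C$ in the claim depend only on $k_0$ and $\Gamma$ and not on $k$; the Lipschitz hypothesis enters exactly here, to guarantee boundedness of the trace and the existence of a continuous right inverse, and to ensure comparability of Hausdorff $(n-1)$-measure on $\Gamma$ with surface measure so that $\bA_k$ can be identified with $S_k$.
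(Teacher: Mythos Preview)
Your approach is correct and essentially matches the paper's: identify $\Gamma$ as an $(n{-}1)$-set so that $S_k=\bA_k$ with $H^{\pm 1/2}(\Gamma)=\bH(\Gamma),\bH^*(\Gamma)$ (equivalent, $k$-independent norms), then invoke \eqref{eq:bAknorm} together with \eqref{eq:equiv2}/\eqref{eq:equiv2a} and Proposition~\ref{prop:Ak}. The paper's routing is slightly cleaner in that the $k^2$ factor on the inverse is read off directly from \eqref{eq:equiv2a} (since $\|\bA_k^{-1}\|=\|A_{1,k}^{-1}\|$ exactly, with no $k$-dependence) rather than from $k$-dependent trace/lifting bounds; your ``$k^{1/2}$ per trace'' heuristic is not quite the right accounting, though your final count of $k$ at each of $\iota^{-1}$ and $\gamma^{-1}$, hence $k^2$ overall, is correct.
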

\noindent Clearly, estimates for $\|S_k^{-1}\|$ that are explicit in their dependence on $k$ can be obtained by combining the second estimate in the above corollary with the results of \S\ref{sec:main}.

In the case that  $U\subset \R^n$ is a Lipschitz screen  in the sense of \S\ref{sec:boundary} and $\Gamma := O := \overline{U}$ we have the following result, in which $\|S_k\|$ and $\|S_k^{-1}\|$ denote the norms of $S_k:\widetilde H^{-1/2}(U)\to H^{1/2}(U)$ and $S_k^{-1}:H^{1/2}(U)\to \widetilde H^{-1/2}(U)$. In this result we spell out consequences of combining the first bound on $\|S_k^{-1}\|$ with the results of \S\ref{sec:main}.
\begin{corollary} \label{thm:Sk} 
Suppose that $U$ is a Lipschitz screen and $\Gamma=O=\overline{U}$. Then, for every $k_0>0$ there exists $c,C>0$ such that
$$
\|S_k\| \leq c\|A_{k,k}\|_k \leq Ck \quad \mbox{and} \quad \|S_k^{-1}\| \leq Ck^2 \|A_{k,k}^{-1}\|_k, \qquad k\geq k_0.
$$
Further, $S_k$ is invertible for all $k>0$ and, for every $\epsilon>0$, $\delta>0$, and $k_0>0$, there exists $C'>0$ and $E\subset [k_0,\infty)$, with Lebesgue measure $m(E)\leq \epsilon$, such that
\begin{equation} \label{eq:boundsSkgen}
\|S_k^{-1}\|\leq C'k^{2n+4+\delta}, \qquad k\in [k_0,\infty)\setminus E.
\end{equation}
In the case that $U$ and thus $\Gamma$ is star-shaped, there exists, for every $k_0>0$, a $C''>0$ such that 
\begin{equation} \label{eq:Skss}
\|S_k^{-1}\|\leq C''k^3, \qquad k\geq k_0.
\end{equation}
\end{corollary}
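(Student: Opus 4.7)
The plan is to deduce every part of the corollary from the bounds on $A_{k,k}$ already established in \S\ref{sec:main}, by means of the norm equivalences for $d$-sets developed in \S\ref{sec:dset}. Since $U$ is a Lipschitz screen, $\Gamma = O = \overline{U}$ is Ahlfors--David $(n-1)$-regular, hence a $d$-set with $d=n-1$ in the sense of \eqref{eq:dset}. Moreover, since $\Gamma = O$ we have $\Omega_- = O\setminus \Gamma = \emptyset$, so $\Sigma(\Omega_-)=\emptyset$ and $A_k$ is invertible for every $k>0$ by \eqref{eq:invert}; this will transfer to invertibility of $S_k$ for every $k>0$ via the equivalences below.

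First I would invoke the realisation of $A_{k,k}$ as the integral operator $\bA_k$ with respect to $(n-1)$-dimensional Hausdorff measure from \S\ref{sec:dset}, together with the norm equivalence \eqref{eq:bAknorm}. On a Lipschitz screen this Hausdorff measure coincides with ordinary surface measure, and the canonical identifications of $H^{-1}_\Gamma$ with $\widetilde H^{-1/2}(U)$ and of $R(P_k)=\widetilde H^1(\Gamma^c)^{\perp_k}$ with $H^{1/2}(U)$ identify $\bA_k$ with $S_k$. This should produce the two norm-equivalence inequalities $\|S_k\| \leq c\|A_{k,k}\|_k$ and $\|S_k^{-1}\| \leq Ck^2\|A_{k,k}^{-1}\|_k$, where the factor $k^2$ in the second inequality arises from converting between the unweighted trace-space norms on $\widetilde H^{-1/2}(U)$ and $H^{1/2}(U)$ and the $k$-weighted ambient norms on $H^{\pm 1}_k(\R^n)$ -- one power of $k$ from each side of the operator $A_{k,k}^{-1}$.

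Granted these two inequalities, the first asserted bound $\|S_k\|\leq Ck$ is immediate from Proposition \ref{prop:Ak}. For \eqref{eq:boundsSkgen} I would combine $\|S_k^{-1}\|\leq Ck^2\|A_{k,k}^{-1}\|_k$ with Corollary \ref{cor:gen}, which gives $\|A_{k,k}^{-1}\|_k \leq C'k^{2n+2+\delta}$ on $[k_0,\infty)\setminus E$ for a set $E$ of Lebesgue measure at most $\epsilon$; multiplication by $k^2$ yields $\|S_k^{-1}\|\leq C''k^{2n+4+\delta}$ off the same set. For the star-shaped case \eqref{eq:Skss}, the hypothesis that $U$ is star-shaped implies that $\Gamma=\overline{U}$ is star-shaped in the sense of Definition \ref{def:ss}, and since also $\Omega_-=\emptyset$, Corollary \ref{cor:ss} applies and gives $\|A_{k,k}^{-1}\|_k \leq Ck$; multiplying by the factor $k^2$ then produces $\|S_k^{-1}\|\leq C''k^3$.

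The main obstacle is verifying the two norm-equivalence inequalities, with sharp and explicit control of their $k$-powers, in particular the factor $k^2$ in the bound for $\|S_k^{-1}\|$. This reduces to $k$-explicit control of the trace operator from $H^1_k(\R^n)$ onto $H^{1/2}(U)$, of a right-inverse extension operator, and of the extension-by-zero map $\widetilde H^{-1/2}(U) \to H^{-1}_\Gamma \subset H^{-1}_k(\R^n)$; each of these operators is bounded with a constant that depends on $k$ in a controlled way, and combining these factors on the source and range sides is exactly what produces the $k^2$. This $k$-explicit bookkeeping is precisely the technical content developed in \S\ref{sec:dset}, which makes \S\ref{sec:slp2} largely a matter of assembling pieces.
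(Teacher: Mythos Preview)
Your proposal is correct and follows essentially the same route as the paper: identify $S_k$ with $\bA_k$ via the $d$-set machinery of \S\ref{sec:dset} (noting $\Gamma=\overline U$ is an $(n-1)$-set and $\Omega_-=\emptyset$), use \eqref{eq:bAknorm} to pass to $A_{1,k}$, then \eqref{eq:equiv2}--\eqref{eq:equiv2a} to reach $A_{k,k}$, and finally apply Proposition~\ref{prop:Ak} and Corollaries~\ref{cor:ss} and~\ref{cor:gen}. One clarification: the $k^2$ factor comes cleanly from \eqref{eq:equiv2a} (converting $\|A_{1,k}^{-1}\|$ to $\|A_{k,k}^{-1}\|_k$), while the identification $\bH(\Gamma)=H^{1/2}(U)$, $\bH^*(\Gamma)=\widetilde H^{-1/2}(U)$ that equates $\|S_k^{\pm1}\|$ with $\|\bA_k^{\pm1}\|$ is $k$-independent---so the trace and extension bounds you mention need only be uniform in $k$, not $k$-explicit.
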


We also, as a corollary of Propositions \ref{prop:L2bound} and \ref{prop:L2bound2}, have the following estimate for $\|S_k\|_{L^2}$,  the norm of $S_k:L^2(\Gamma)\to L^2(\Gamma)$. This result is known in dimensions $n=2,3$ \cite[Theorem 3.3]{ChGrLaLi:09}, and follows in that case from Proposition \ref{prop:L2bound2} which gives that $\|S_k\|_{L^2}\leq C k^{(n-3)/2}$, for $k\geq k_0$, for every dimension $n$. If $\Gamma$ is piecewise $C^\infty$ then, for some $C>0$, $\|S_k\|_{L^2} \leq C k^{-1/2}\log(1+k)$, for $k\geq k_0$ \cite{HaTa:15}, a bound that is stronger than the following corollary if $n\geq 3$. Moreover \cite{HaTa:15,GaSp2019}, if $\Gamma$ contains a $C^2$ neighbourhood of a straight line segment, then, for some $C'>0$, $\|S_k\|\geq C' k^{-1/2}$, $k\geq k_0$, so that the estimates below are sharp for $n=2$, and, if they are not sharp for $n\geq 3$ as estimates that hold for all Lipschitz $\Gamma$, fail to be sharp in their $k$-dependence by at worst a factor $k^{1/2+\eps}$ for $n\geq 3$.
\begin{corollary} \label{cor:L2}
Suppose that $O$ is the closure of a  Lipschitz domain and that $\Gamma=\partial O$ or that $U$ is a Lipschitz screen and $\Gamma=O=\overline{U}$. Then, if $n=2,3$, for every $k_0>0$ there exists $c>0$ such that
$$
\|S_k\|_{L^2} \leq c k^{(n-3)/2}, \qquad k\geq k_0.
$$   
If $n\geq 4$ then, for every $\eps>0$ and $k_0>0$ there exists $C>0$ such that
$$
\|S_k\|_{L^2} \leq C k^\eps, \qquad k\geq k_0.
$$
\end{corollary}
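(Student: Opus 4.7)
The plan is to derive Corollary \ref{cor:L2} as a direct consequence of Propositions \ref{prop:L2bound} and \ref{prop:L2bound2}, which are proved in the general $d$-set framework of \S\ref{sec:dset}. The only thing to check is that the Lipschitz domain and Lipschitz screen settings of Corollary \ref{cor:L2} fall into that framework with $d = n{-}1$, and that this identification preserves the $L^2\to L^2$ operator norm of $S_k$ up to multiplicative constants.

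First I would verify that in both settings of the corollary the set $\Gamma$ is an Ahlfors--David $(n{-}1)$-regular set, i.e., satisfies the $d$-set condition \eqref{eq:dset} with $d = n{-}1$. For $\Gamma = \partial O$ with $O$ the closure of a Lipschitz domain this is classical and follows from the local graph description of $\partial O$; for $\Gamma = \overline{U}$ with $U$ a Lipschitz screen the same local graph argument applies on the relative interior of the screen, and the boundary $\partial U$ is of zero $(n{-}1)$-dimensional Hausdorff measure and so does not affect the regularity estimate. On such a $\Gamma$, surface measure and $\mathcal{H}^{n-1}|_\Gamma$ are uniformly comparable, so $L^2(\Gamma)$ in the sense of Corollary \ref{cor:L2} coincides, with equivalent norm, with the Hausdorff-measure $L^2$ used in \S\ref{sec:dset}; moreover, by the norm-equivalence \eqref{eq:bAknorm} noted in \S\ref{sec:slp}, the classical single-layer BIO $S_k$ of \eqref{eq:Sk} is identified with the integral operator $\bA_k$ of \S\ref{sec:dset} up to a bounded multiplicative constant depending only on $\Gamma$. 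Consequently $\|S_k\|_{L^2}$ and $\|\bA_k\|_{L^2\to L^2}$ agree up to the same constant.

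Given this identification, for $n = 2, 3$ I would simply invoke Proposition \ref{prop:L2bound2} with $d = n{-}1$ to obtain $\|\bA_k\|_{L^2\to L^2} \leq C k^{(n-3)/2}$ and hence $\|S_k\|_{L^2} \leq c k^{(n-3)/2}$, which is the first assertion. For $n \geq 4$ I would analogously apply Proposition \ref{prop:L2bound} with $d = n{-}1$ to extract the $k^\eps$ bound, giving the second assertion. The only points that call for care are the comparability of surface measure with $\mathcal{H}^{n-1}$ and the identification of $S_k$ with $\bA_k$, both of which are standard for Lipschitz $\Gamma$; there is thus no substantive obstacle, since Propositions \ref{prop:L2bound} and \ref{prop:L2bound2} do all the analytic work.
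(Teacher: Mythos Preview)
Your proposal is correct and matches the paper's approach closely: both reduce Corollary~\ref{cor:L2} to Propositions~\ref{prop:L2bound} and~\ref{prop:L2bound2} via the identification of $S_k$ with $\bA_k$ when $\Gamma$ is an $(n{-}1)$-set, noting that $\cH^{n-1}$ coincides with surface measure (in fact exactly, not merely comparably, by \cite[Thm.~3.8]{EvansGariepy}, so $S_k=\bA_k$ with equality of $L^2$ norms; your reference to \eqref{eq:bAknorm} is a slight misattribution, since that equation concerns $\bH^*\to\bH$ norms, whereas what is needed here is the pointwise equality of the integral representations \eqref{eq:Sk} and \eqref{eq:bAk2}).

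The one place the paper proceeds differently is the Lipschitz screen case: rather than verifying directly that $\Gamma=\overline{U}$ is an $(n{-}1)$-set and applying Propositions~\ref{prop:L2bound} and~\ref{prop:L2bound2} to it, the paper embeds $L^2(\Gamma)$ as a closed subspace of $L^2(\Gamma')$, where $\Gamma'$ is the full boundary of an ambient Lipschitz domain containing the screen, and then invokes the already-established Lipschitz-boundary case for $\Gamma'$. Your direct route is also valid (the lower Ahlfors bound at points of $\Pi=\partial U$ follows from the Lipschitz dissection hypothesis), but the paper's embedding argument sidesteps that verification entirely.
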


We make the following additional remarks regarding the novelty of the above results and their relationship to previous work. 
\begin{remark}[\bf Bounds on $\|S_k\|$] 
Our bound on $S_k$ in Corollary \ref{thm:Sk0} improves, in higher dimensions and for non-smooth boundaries, on the  bounds that have been obtained by Graham et al \cite{GrLoMeSp:15} and by Galkowski and Spence \cite{GaSp2019}. %These are bounds on $S_k$ as a mapping $L^2(\Gamma)\to H^1(\Gamma)$, but such a bound implies, by interpolation (see, e.g., the discussion in \cite[\S2.3]{chandler2020high}), the same bound (to within constants) for the mapping $S_k:H^{-1/2}(\Gamma)\to H^{1/2}(\Gamma)$ of Theorem \ref{thm:Sk0}. 
The bound of \cite[Thm.~1.6 (i)]{GrLoMeSp:15}, which applies, as does our result, when $\Gamma$ is the boundary of a Lipschitz domain, is that, for some constant $C>0$ dependent on $k_0$, $\|S_k\|\leq Ck^{(n-1)/2}$ for $k\geq k_0$. This is sharper than our bound for dimension $n=2$, and less sharp for $n>3$. The bounds of   \cite[Thm.~1.5]{GaSp2019} are that, for $k\geq k_0$, $\|S_k\|\leq Ck^{1/2}\log(1+k)$ if $\Gamma$ is piecewise smooth, $\|S_k\|\leq Ck^{1/3}\log(1+k)$ if $\Gamma$ is piecewise curved
%, and $\|S_k\|\leq Ck^{1/3}\log(1+k)$ if $\Gamma$ is curved  
(as defined in  \cite{GaSp2019}). These bounds are shown in  \cite{GaSp2019} to be sharp, to within the $\log$ factors,  for their respective geometry classes, but the proofs rely on the (piecewise) smoothness of the geometry.

The bound  in Corollary \ref{thm:Sk}  that  $\|S_k\|\leq Ck$, for $k\geq k_0$, can be deduced from the corresponding bound in Corollary \ref{thm:Sk0}. Similarly, bounds on $S_k:\widetilde H^{-1/2}(U)\to H^{1/2}(U)$ can be deduced from the results in \cite{GrLoMeSp:15,GaSp2019}; for example, for $k\geq k_0$, $\|S_k\|\leq Ck^{(n-1)/2}$ and, if $\Gamma$ is piecewise smooth, $\|S_k\|\leq Ck^{1/2}\log(1+k)$. In the special case that $U$ (and so $\Gamma=O$) is contained in a hyperplane, it is shown in \cite{CWHewett2015} that $\|S_k\| \leq Ck^{1/2}$, and that this estimate is sharp.
\end{remark}

\begin{remark}[\bf Bounds on $\|S_k^{-1}\|$]
Our bounds on $\|S_k^{-1}\|$ in Corollary \ref{thm:Sk} are the first $k$-explicit bounds for  a screen that is not flat. The only previous $k$-explicit bound for $\|S_k^{-1}\|$ for a screen is that of \cite{CWHewett2015} for the case that the screen $O$ is contained in a hyperplane. The Fourier analysis in that paper \cite[Thm.~1.7]{CWHewett2015} -- which relies on the flatness of the screen -- implies, together with \eqref{eq:equiv}, that, for every $k_0>0$ there exists $C>0$ such that  $\|S_k^{-1}\|\leq Ck$, $k\geq k_0$. 

Our bounds on $\|S_k^{-1}\|$ in Corollary \ref{thm:Sk0} are the first $k$-explicit bounds for the case that $\Gamma$ is the boundary of a Lipschitz domain. Sharper bounds for this case (sharper by a factor $k^2$) are provided in our companion paper \cite{SiavashSimon1} through additional arguments, including $k$-explicit Rellich identities, that are available in that case  (cf.~\cite[\S3.3]{spence2014wavenumber}, \cite[\S4.2]{chandler2020high}). We emphasise that these methods and the associated sharper bounds do not apply in other cases, in particular do not apply when $\Gamma$ is a Lipschitz screen.
\end{remark}

%normal equations
%$$
%P_NB_k^*B_k\phi_N = P_NB_k^* g,
%$$
%where $B_k^*:\cH^*\to \cH$ is the (Hilbert space) adjoint of $B_k$ and $P_N:\cH\to \cH_N$ is orthogonal projection.

\section{Function spaces} \label{sec:fs}
We give brief details in this section of the function space notations that we will need to state and prove our results.

\subsection{Sobolev spaces on $\R^n$ and on domains $\Omega\subset \R^n$} \label{sec:fsRn}
Given a domain (i.e., a non-empty open set) $\Omega\subset\mathbb{R}^n$, let $C_0^\infty(\Omega)\subset C^\infty(\R^n)$ denote the set of smooth complex-valued functions which are compactly supported in $\Omega$. %Let $\cD(\Omega)$, the space of test functions, denote $C_0^\infty(\Omega)$ equipped with its usual topology (e.g., \cite[\S6.3]{RudinFA}), and $\cD^*(\Omega)$ denote its dual space\footnote{Throughout, to suit a Hilbert space setting, our distributions and other functionals will be anti-linear rather than linear, so that our dual spaces are spaces of anti-linear continuous functionals.}, the space of distributions in $\Omega$. 
As usual (e.g., \cite{mclean2000strongly}), let $\cS(\R^n)\supset C_0^\infty(\R^n)$ denote the Schwartz space of rapidly decreasing, $C^\infty$ functions, $\cS^*(\R^n)$ its dual space, the space of tempered distributions. (Throughout, to suit a Hilbert space setting, our distributions and other functionals will be anti-linear rather than linear, so that our dual spaces are spaces of anti-linear continuous functionals.) 

For $u\in \cS(\R^n)$ let $\hat u\in \cS(\R^n)$ denote the Fourier transform of $u$, choosing the normalising constant in our definition so that the mapping $u\mapsto \hat u$ is unitary on $L^2(\R^n)$; precisely, 
$$
\hat u(\xi) := (2\pi)^{-n/2}\int_{\R^n}\re^{-\ri \xi\cdot x}u(x)\, dx, \qquad \xi\in \R^n.
$$
We extend the domain of the Fourier transform to the tempered distributions $\cS^*(\R^n)$ in the usual way. For $s\in \R$, let $H^s(\R^n)$ denote the standard Sobolev space of those $u\in \cS^*(\R^n)$ whose distributional Fourier transform $\hat u$ is locally integrable and satisfies
\begin{equation} \label{eq:unorm}
\|u\|_{H^s(\R^n)} := \left(\int_{\R^n}|\hat u(\xi)|^2(1+|\xi|^2)^s\, \rd \xi\right)^{1/2} < \infty.
\end{equation}
As usual, we identify $L^2(\R^n)$ with $H^0(\R^n)$, so that $H^s(\R^n)\subset L^2(\R^n)$, for $s\geq 0$, and identify $H^{-s}(\R^n)$ with the dual space $(H^s(\R^n))^*$.

We will often equip $H^s(\R^n)$, as is common in the study of $k$-dependence of operators and their norms (see, e.g., the discussion around equation (26) in \cite{CWHewett2015}),  with a $k$-dependent norm given by
\begin{equation} \label{eq:knorm}
\|u\|_{H_k^s(\R^n)} := \left(\int_{\R^n}|\hat u(\xi)|^2(k^2+|\xi|^2)^s\, \rd \xi\right)^{1/2},
\end{equation}
and $H_k^s(\R^n)$ will denote the set $H^s(\R^n)$ equipped with the norm \eqref{eq:knorm}. ($H^s(\R^n)$ and $H^s_k(\R^n)$ are both Hilbert spaces equipped with the inner products implied by their respective norms.)
Clearly, $\|\cdot\|_{H_k^s(\R^n)}= \|\cdot\|_{H^s(\R^n)}$ for $k=1$, and, for every $k>0$, the norms  $\|\cdot\|_{H_k^s(\R^n)}$ and $\|\cdot\|_{H^s(\R^n)}$ are equivalent, with
\begin{equation} \label{eq:equiv}
\min(1,k^{-s})\|u\|_{H_k^s(\R^n)}\leq  \|u\|_{H^s(\R^n)}\leq \max(1,k^{-s})\|u\|_{H_k^s(\R^n)}, \qquad u\in H^s(\R^n).
\end{equation}
For $s\in \R$ and $k>0$, $H_k^{-s}(\R^n)$ is a natural realisation of the dual space of $H_k^s(\R^n)$ through the duality pairing
\begin{equation} \label{eq:dual}
\langle u,v\rangle := \int_{\R^n} \hat u(\xi) \overline{\hat v(\xi)}\, d\xi, \qquad u\in H_k^{-s}(\R^n), \quad v\in H_k^s(\R^n),
\end{equation}
that extends the inner product %, \blue{$(\cdot,\cdot)_{L^2(\R^n)}$,} 
on $L^2(\R^n)$ and the duality pairing on $\cS^*(\R^n)\times \cS(\R^n)$.

We will use at one point that, for every $k>0$, an application of \cite[Cor.~3.2]{chandler2015interpolation} (as corrected in \cite{ChHeMo:22}), gives that $\{H_k^s(\R^n):s\in \R\}$ is an {\em exact interpolation scale} in the sense of \cite[Rem.~3.8]{chandler2015interpolation} (as corrected in \cite{ChHeMo:22}). This means that, for $s,t\in \R$ and $0<\theta<1$, $(H_k^s(\R^n), H_k^t(\R^n))_{[\theta]} = H_k^r(\R^n)$, with equality of norms, where  $r = (1 - \theta)s + \theta t$ and $(H_k^s(\R^n), H_k^t(\R^n))_{[\theta]}$ denotes the result of complex interpolation between the indicated spaces (our notation is that of \cite{Bergh:76}); equivalently, $(H_k^s(\R^n), H_k^t(\R^n))_{[\theta]}$ denotes the space $(H_k^s(\R^n), H_k^t(\R^n))_{\theta,2}$ that is the result of real interpolation with either the $K$- or the $J$-method, with an appropriate normalisation in each case (see \cite{chandler2015interpolation,ChHeMo:22} for details).

Given a domain $\Omega\subset\mathbb{R}^n$, we say that $u\in H^1(\Omega)$ if $u\in L^2(\Omega)$ and its weak derivative $\nabla u\in (L^2(\Omega))^n$. Let $\|\cdot\|_{L^2(\Omega)}$ denote the usual $L^2(\Omega)$ norm. Then $H^1(\Omega)$ is a Hilbert space when equipped with the norm $\|\cdot\|_{H^1(\Omega)}$, defined by
\begin{equation*}
\|u\|^2_{H^1(\Omega)}=\| |\nabla u| \|^2_{L^2(\Omega)}+\| u\|^2_{L^2(\Omega)}, \quad u\in H^1(\Omega).
\end{equation*}
Let
 $$
 H_0^1(\Omega) := \text{clos}_{H^1(\Omega)}(C^{\infty}_0(\Omega)),
 $$
 the closure of $C_0^\infty(\Omega)$ in $H^1(\Omega)$.
 
The $k$-dependent Sobolev space $H_k^1(\Omega)$, $k>0$, is defined as the set $H^1(\Omega)$ equipped with the Hilbert space norm $\|\cdot\|_{H_k^1(\Omega)}$, defined by   
\begin{equation*}
\|u\|^2_{H_k^1(\Omega)}=\| |\nabla u| \|^2_{L^2(\Omega)}+k^2\| u\|^2_{L^2(\Omega)}, \quad u\in H_k^1(\Omega).
\end{equation*}
Note that, in the case $\Omega=\R^n$, these definitions of $H^1(\Omega)$ and $H^1_k(\Omega)$ agree, with equality of norms, with our definitions above for $H^1(\R^n)$ and $H^1_k(\R^n)$.  
Generalising $H_k^1(\Omega)$, we will need, for $m\in \N$ and $k>0$,  at one point the space $H_k^m(\Omega)$, defined, for the case $\Omega\neq \R^n$, as the space of those $u\in L^2(\Omega)$ whose (weak) partial derivatives of all orders $\leq m$ are square integrable on $\Omega$, a Hilbert space equipped with the norm $\|\cdot\|_{H_k^m(\Omega)}$ defined by
$$
\|u\|^2_{H_k^m(\Omega)} := k^m\sum_{0\leq |\alpha|\leq m}\|(k^{-1}\partial)^\alpha u\|^2_{L^2(\Omega)}, \qquad u\in H_k^m(\Omega).
$$
This definition coincides, for $m=1$, with our definition of $H_k^1(\Omega)$, and is consistent (to within equivalence of norms, uniformly for $k>0$) to the definition above of $H^m_k(\R^n)$.
 
 We will use  certain spaces of compactly supported and locally integrable functions. Let $\Lcomp(\Omega)$ denote the space of those functions in $L^2(\Omega)$ that have bounded support, a topological vector space (indeed an LF-space, e.g. \cite{Treves67}) in which a sequence $(u_n)$ is convergent if it is convergent in $L^2(\Omega)$ and, for some bounded $V\subset \Omega$, $\supp(u_n)\subset V$ for each $n$. Let $\Lloc(\Omega)$ denote the set of functions on $\Omega$ that satisfy $u|_V\in L^2(V)$ for every bounded, measurable $V\subset \Omega$, a Fr\'echet space with the obvious topology (e.g., \cite[p.~52]{wilcox1975}) for which a sequence $(u_n)\subset \Lloc(\Omega)$ is convergent if $(u_n|_V)$ is convergent in $L^2(V)$ for every bounded $V\subset \Omega$. Let 
\begin{align*}
 H^{1,\text{loc}}(\Omega)&:= \{u\in \Lloc(\Omega):\nabla u\in (\Lloc(\Omega))^n\}\\ &=\{u\in \Lloc(\R^n): \chi|_\Omega \,u\in H^1(\Omega), \forall \chi\in C^\infty_0(\R^n)\},
\end{align*} 
and let
$$
H_0^{1,\text{loc}}(\Omega):=\{u\in H^{1,\text{loc}}(\Omega): \chi|_\Omega\, u\in H_0^1(\Omega), \forall \chi\in C^\infty_0(\mathbb{R}^n)\}.
$$
%$$\widetilde{H}^{1,\text{loc}}(\Omega)=\{u\in H^{1,\text{loc}}(\mathbb{R}^n): \psi u\in\widetilde{H}^1(\Omega), \hspace{0.2cm}\forall \psi\in C^\infty_0(\mathbb{R}^n)\}.$$
  
 \subsection{Subspaces of $H^s(\R^n)$} \label{sec:sub}
We need also certain subspaces of $H^s(\R^n)$. For $s\in\mathbb{R}$ and every domain $\Omega\subset \R^n$, let  
 $$
 \widetilde{H}^s(\Omega):=\text{clos}_{H^s(\R^n)}(C^{\infty}_0(\Omega)).
 $$ 
 Importantly, if $u\in \widetilde{H}^1(\Omega)$, then $u=0$ a.e. on $\mathbb{R}^n\setminus\Omega$ and $u|_\Omega\in H_0^1(\Omega)$. Conversely, if $u\in H_0^1(\Omega)$ and the definition of $u$ is extended to $\R^n$ by setting $u=0$ on $\R^n\setminus \Omega$, then the extended $u\in \widetilde H^1(\Omega)$. 
 
 For any closed set $F\subset\mathbb{R}^n$, we set
\begin{equation*}
H^s_F:=\{u\in H^s(\mathbb{R}^n): \text{supp}(u)\subset F\},
\end{equation*}
noting that $H^s_F$ is trivial, meaning that $H^s_F=\{0\}$, if $\mathrm{int}(F)$, the interior of $F$, is empty and $s$ is large enough (see \cite{HeMo:16}); in particular, $H^{-1}_F=\{0\}$ if and only if $F$ has zero $H^1(\R^n)$ capacity \cite[Thm.~3.15]{HeMo:16}. (Note that  $F$ has positive $H^1(\R^n)$ capacity if $\dim_H(F)>n-2$, but not if $\dim_H(F)<n-2$ \cite[Thm.~2.12]{HeMo:16}, where $\dim_H(F)$ denotes the Hausdorff dimension of $F$, as defined,  e.g., in  \cite{Falconer2014}.) The subspace $\widetilde H^{-s}(\Omega)^\perp \subset H^{-s}(\R^n)$, where $\Omega:=\R^n\setminus F$ and $\perp$ denotes orthogonal complement, is an isometric realisation of the dual space $(H^s_F)^*$ via the duality pairing \eqref{eq:dual} restricted to $\widetilde H^{-s}(\Omega)^\perp \times H^s_F$ \cite[Thm.~3.15]{ChHeMo:17}. 

More generally, arguing as in  \cite[Thm.~3.15]{ChHeMo:17}, and where $\perp_k$ denotes orthogonal complement when $H^s(\R^n)$ is equipped with the norm $H^s_k(\R^n)$, $\widetilde H^{-s}(\Omega)^{\perp_k} \subset H^{-s}(\R^n)$ is an isometric realisation of the dual space $(H^s_F)^*$ via the duality pairing \eqref{eq:dual} restricted to $\widetilde H^{-s}(\Omega)^{\perp_k} \times H^s_F$, when these spaces are equipped with the norms $H^{-s}_k(\R^n)$ and $H^s_k(\R^n)$, respectively. Precisely, for $k>0$, the mapping $\mathcal{I}:\widetilde H^{-s}(\Omega)^{\perp_k} \to (H^s_F)^*$, defined by
$$
\mathcal{I}u(v) := \langle u,v\rangle, \quad v\in H^s_F,
$$
is an isometric isomorphism when $\widetilde H^{-s}(\Omega)^{\perp_k}$ and  $H^s_F$ are equipped with the norms $H^{-s}_k(\R^n)$ and $H^s_k(\R^n)$, enabling us to identify $(H^s_F)^*=(\widetilde H^{-s}(\Omega))^{\perp_k}$, with equality of norms.  

Extending this further, %let $Q:H^{-s}(\R^n)\to H^{-s}(\R^n)$ be any continuous projection operator with range $R(Q)=\widetilde H^{-s}(\Omega)$, in which case (e.g., \cite[\S5.15-5.16]{RudinFA}) 
let $P:H^{-s}(\R^n)\to H^{-s}(\R^n)$ be any projection operator with closed range $R(P)\subset H^{-s}(\R^n)$ and kernel $\widetilde H^{-s}(\Omega)$, so that the mapping $\widetilde P:\widetilde H^{-s}(\Omega)^{\perp_k}\to R(P)$, defined by $\widetilde Pu :=Pu$, $u\in  \widetilde H^{-s}(\Omega)^{\perp_k}$, is bijective and so an isomorphism by the Banach theorem (e.g., \cite[Corollaries 2.12]{RudinFA}). Then we can identify $R(P)$ with the dual space $(H_F^s)^*$, through the isomorphism
\begin{equation} \label{eq:dual2}
R(P)\to (H_F^s)^*, \; u\mapsto \tilde u, \mbox{ where } \tilde u(v) := (\mathcal{I}\widetilde P^{-1}u)(v)=\langle \widetilde P^{-1}u,v\rangle = \langle P^+u,v\rangle,
\end{equation}
for $u\in R(P)$, $v\in H_F^s$, where $P^+:H^{-s}(\R^n)\to H^{-s}(\R^n)$ is the Moore-Penrose inverse of $P$ (e.g., \cite[\S2.1.2]{Hagen}).

Note that \eqref{eq:dual2}  is an isometric isomorphism in the special case that % $Q$ is orthogonal projection, in which case 
$P$ is orthogonal projection onto $R(P) =  \widetilde H^{-s}(\Omega)^{\perp_k}$, in which case $\widetilde P$ is the identity operator, $P^+=P$, and the isomorphism coincides with $\mathcal{I}$. Conversely, if the isomorphism \eqref{eq:dual2}  is isometric then, since $\mathcal{I}$ is isometric, also $\widetilde P$ is isometric, and so (e.g., \cite[Theorem 12.13]{RudinFA}) unitary, so that, if $u\in \widetilde H^{-s}(\Omega)^{\perp_k}$, then, where $(\cdot,\cdot)_{k}$ denotes the inner product on $H^{-s}(\R^n)$ compatible with the norm $\|\cdot\|_{H_k^{-s}(\R^n)}$, 
$$
\|u-Pu\|^2_{H_k^{-s}(\R^n)} =(u-Pu,u-Pu)_{k}=(P(u-Pu),P(u-Pu))_{k}=0,
$$
since $P$ is a projection. Thus $\widetilde P$ is the identity operator, $R(P)=\widetilde H^{-s}(\Omega)^{\perp_k}$, and $P$ is orthogonal projection onto $\widetilde H^{-s}(\Omega)^{\perp_k}$.
%, by \cite[Theorem 12.14]{RudinFA}, and $Q$ is orthogonal projection onto $\widetilde H^{-s}(\Omega)$.

%We note that, for all $s\in\mathbb{R}$ and every domain $\Omega\subset \R^n$ we have
%\begin{equation}\label{4}
%\widetilde{H}^s(\Omega)\subset H^s_{\overline{\Omega}},
%\end{equation}
%with equality in many cases, in particular if $\Omega$ is $C^0$ \cite[Thm.~3.29]{mclean2000strongly}, \cite[Lem.~3.15]{ChHeMo:17}. 

%For compact $F\subset \R^n$ it will be convenient to use 

\subsection{Boundary Sobolev spaces} \label{sec:boundary}
We will need in \S\ref{sec:slp2} also Sobolev spaces defined on the boundaries of bounded Lipschitz domains. Suppose that $\Omega^{*}\subset \R^n$ is a bounded Lipschitz domain with boundary $\Gamma = \partial \Omega^{*}$. 
%(in which case $\Omega_+ := \R^n\setminus \overline{\Omega_-}$ is also a Lipschitz domain with boundary $\Gamma$). 
For $|s|\leq1$ we can define boundary Sobolev spaces $H^s(\Gamma)$ on $\Gamma$ in one of several standard equivalent ways (which give the same space, with equivalent norms). Concretely,  we define $H^s(\Gamma)$, for $|s|\leq 1$, as in  \cite[\S A.3]{chandler2012numerical} (or see \cite[pp.~98--99]{mclean2000strongly}), so that $H^s(\Gamma)$ is a Hilbert space for $|s|\leq 1$, $H^s(\Gamma)\subset L^2(\Gamma)$, $0\leq s\leq 1$, indeed $H^0(\Gamma)=L^2(\Gamma)$ with equivalence of norms, $H^s(\Gamma)$ is continuously and densely embedded in $H^t(\Gamma)$, for $-1\leq t< s\leq 1$. and $H^{-s}(\Gamma)=(H^s(\Gamma))^*$, $|s|\leq 1$, with duality pairing $\langle\cdot, \cdot \rangle_\Gamma$ that extends the inner product $(\cdot,\cdot)_{L^2(\Gamma)}$.

We will need also subspaces of $H^s(\Gamma)$, analogous to the subspaces of $H^s(\R^n)$ introduced in \S\ref{sec:sub}. Suppose that $\Gamma_1,\Gamma_2\subset \Gamma$ are disjoint, non-empty, relatively open subsets of $\Gamma$, with $\Pi$ as their common boundary in $\Gamma$, which are such that $(\Gamma_1,\Pi,\Gamma_2)$ is a Lipschitz dissection of $\Gamma$ in the sense of \cite[p.~99]{mclean2000strongly}. (Roughly speaking, this means that $\Gamma_1$ is a subset of $\Gamma$ with a Lipschitz boundary $\Pi$, and we will refer to $\Gamma_1$ as a {\em Lipschitz subset} of $\Gamma$, or as a {\em Lipschitz screen}.)  Let $C_0^\infty(\Gamma):= \{u|_\Gamma:u\in C_0^\infty(\R^n)\}$, $C_0^\infty(\Gamma_1) := \{\phi\in C_0^\infty(\Gamma):\mathrm{supp}(\phi)\subset \Gamma_1\}$ and, for $|s|\leq 1$,
$$
\widetilde H^s(\Gamma_1) :=  \text{clos}_{H^s(\Gamma)}(C^{\infty}_0(\Gamma_1)).% \quad  H^s_{\Gamma_1} :=  \{\phi\in H^s(\Gamma):\mathrm{supp}(\phi)\subset \Gamma_1\},
$$
%noting that, since the boundary of $\Gamma_1$ is Lipschitz,  
%\begin{equation} \label{eq:same}
%\widetilde H^s(\Gamma_1) = H^s_{\Gamma_1}, \qquad |s|\leq 1
%\end{equation}
%(see \cite[p.~99, Theorem 3.29]{mclean2000strongly}). 
For $|s|\leq 1$, let $H^s(\Gamma_1) := \{\phi|_{\Gamma_1}:\phi\in H^s(\Gamma)\}$, which we equip with the quotient norm $\|\cdot\|_{H^s(\Gamma_1)}$, defined by 
$$
\|\psi\|_{H^s(\Gamma_1)} := \inf_{\stackrel{\phi\in H^s(\Gamma)}{\phi|_{\Gamma_1} = \psi}}\|\phi\|_{H^s(\Gamma)}, \qquad \psi \in H^s(\Gamma_1).
$$ %and note that 
$H^s(\Gamma_1)$ is a natural realisation of $(\widetilde H^{-s}(\Gamma_1))^*$, with duality pairing extending the $L^2(\Gamma_1)$ inner product (see \cite[p.~99, Theorem 3.30]{mclean2000strongly}). Our notation $H^1(\Gamma_1)$ is consistent with our notation in \S\ref{sec:fsRn} for $H^1(\Omega)$ by \cite[p.~99, Theorem 3.30]{mclean2000strongly}.

\section{The IE formulation of Caetano et al, and its extension to higher dimensions} \label{sec:IE}
In this section we recall the IE formulation of \cite[\S3(a)]{caetano2025integral} and its justification. As we do this we make non-trivial extensions of the results of \cite{caetano2025integral}. In particular, we provide additional argument that clarifies that the results apply in all space dimensions $n\geq 2$ (the focus in \cite{caetano2025integral} is $n=2,3$). We show that there is large choice available in the selection of the projection operator $P$ in the definition \eqref{eq:Akdef} of the operator $A_k$ below (this flexibility is crucial to our $k$-explicit arguments; see \S\ref{sec:choice1}). We also provide justification that, even when uniqueness fails, the IE has a solution, and that any such solution provides a solution to the scattering problem.

As we recalled in \S\ref{sec:intro}, given some compact obstacle $O\subset \R^n$ and some incident field $u^i\in \Hol(\R^n)$, the approach of Caetano et al \cite{caetano2025integral} to the exterior sound-soft scattering problem is to choose a compact $\Gamma\subset \R^n$ satisfying \eqref{eq:GamRest} and look for a solution in the form $u|_\Omega$, where $u=\cA_k \phi$, for some $\phi\in H^{-1}_\Gamma$. Here $\cA_k$ is the acoustic Newtonian potential, defined by  \eqref{eq:Newt} for $\phi\in \Lcomp(\R^n)$, where $\Phi_k(x,y)$ is the outgoing fundamental solution of the Helmholtz equation in $\R^n$, given by 
(e.g., \cite[Eqn.~(9.14)]{mclean2000strongly})
\begin{equation} \label{eq:Phidef}
\Phi_k(x,y) := \frac{\ri}{4}\left(\frac{k}{2\pi |x-y|}\right)^{(n-2)/2}H_{(n-2)/2}^{(1)}(k|x-y|), \quad x,y\in \R^n, \;\; x\neq y,
\end{equation} 
where $H_m^{(1)}$ denotes the Hankel function of the first kind of order $m$. This reduces (see \cite[Eqn.~10.16.2]{NIST}) to the familiar \eqref{eq:Phidef2} in the physically important case $n=3$.

 It is standard (e.g., \cite[Lem.~3.24]{mclean2000strongly}) that $\Lcomp(\R^n)$ is dense in $H^{s}_{\mathrm{comp}}(\R^n)$ for every $s<0$ and (e.g., \cite[Thm.~3.1.2]{SaSc:11}, \cite[Thm.~6.1]{mclean2000strongly}) that, for every $s\in \R$, $\cA_k$ extends to a continuous map  
\begin{equation} \label{eq:mapping}
\cA_k:H^{s}_{\mathrm{comp}}(\R^n) \to H^{s+2,\mathrm{loc}}(\R^n).
\end{equation}  
Further, for every $\phi\in H^{-1}_\Gamma$, since $\Gamma \subset O$, $\cA_k\phi$ satisfies \eqref{eq:he} (e.g., \cite[Eqns.~(6.2), (9.14)]{mclean2000strongly}); indeed, for every $s\in \R$,
\begin{equation} \label{eq:inv}
(\Delta  + k^2)\cA_k\phi = \cA_k(\Delta  + k^2)\phi = -\phi, \qquad \phi\in H^s_{\mathrm{comp}}(\R^n).
\end{equation}
Moreover, $\cA_k\phi$ satisfies the radiation condition \eqref{eq:src} (e.g.,  \cite[Lem.~7.14, Thm.~9.6]{mclean2000strongly}).  In the proof of Proposition \ref{prop:coer} we will need the following lemma.

\begin{lemma} \label{lem:map}
 $\cA_k:\cS(\R^n)\to C^\infty(\R^n)$, the mapping is continuous, and \eqref{eq:inv} holds for $\phi\in \cS(\R^n)$. 
\end{lemma}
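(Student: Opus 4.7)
The plan is to identify $\cA_k\phi$ with the convolution $\Phi_k \ast \phi$ and then to invoke the classical theory of convolutions of tempered distributions with Schwartz functions. First I would verify that $\Phi_k \in \cS^{*}(\R^n)$: from \eqref{eq:Phidef} and the standard large- and small-argument asymptotics of the Hankel function (e.g., \cite[Chapter 10]{NIST}), one has $|\Phi_k(x,y)| \leq C(1+|x-y|)^{-(n-1)/2}$ as $|x-y|\to\infty$, which is tempered, while $\Phi_k(\cdot,y)$ has an $O(|\cdot-y|^{2-n})$ (or $O(\log|\cdot-y|)$ if $n=2$) singularity on the diagonal, both of which are locally integrable.

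Given that, I would appeal to the standard result (e.g., \cite[Thm.~27.3]{Treves67}) that convolution with a fixed tempered distribution defines a continuous map $\cS(\R^n) \to C^\infty(\R^n) \cap \cS^{*}(\R^n)$, where $C^\infty(\R^n)$ carries the Fréchet topology of uniform convergence on compacta of all derivatives. This immediately yields the first two assertions. Alternatively, continuity can be verified by hand using the commutation $\partial^\alpha \cA_k\phi = \cA_k(\partial^\alpha\phi)$: for each compact $K \subset \R^n$ and each multi-index $\alpha$, one splits the integral defining $(\cA_k \partial^\alpha\phi)(x)$ into the regions $|x-y|\leq 1$ and $|x-y|\geq 1$, bounding each piece by a finite Schwartz seminorm of $\phi$, uniformly in $x \in K$.

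For the identity \eqref{eq:inv}, I would combine the fundamental-solution property $(-\Delta-k^2)\Phi_k = \delta_0$ in $\cS^{*}(\R^n)$ with the standard rule $T \ast (\partial^\alpha\phi) = (\partial^\alpha T) \ast \phi$ valid for $T \in \cS^{*}(\R^n)$ and $\phi \in \cS(\R^n)$. This gives
\begin{equation*}
(\Delta+k^2)\cA_k\phi \;=\; \bigl((\Delta+k^2)\Phi_k\bigr) \ast \phi \;=\; -\delta_0 \ast \phi \;=\; -\phi,
\end{equation*}
and, by instead transferring the derivatives from $\phi$ onto $\Phi_k$ through the convolution, the same computation yields $\cA_k(\Delta+k^2)\phi = -\phi$.

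The only mildly delicate point I anticipate is the $n=2$ case, in which the diagonal singularity of $\Phi_k$ is merely logarithmic rather than a negative power, so a slight variant of the near-diagonal $L^1$ estimate is required; this is elementary. Everything else is routine bookkeeping with standard distribution-theoretic tools.
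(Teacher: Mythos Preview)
Your proposal is correct and follows essentially the same route as the paper: both arguments write $\cA_k\phi$ as the convolution $G\ast\phi$ with $G$ locally integrable and tempered at infinity, and then invoke standard convolution theory. The paper is slightly more concrete (it uses the decomposition $G\in L^1(\R^n)+L^\infty(\R^n)$ and Young's inequality directly, and obtains \eqref{eq:inv} by continuity from the dense subspace $C_0^\infty(\R^n)$), whereas you appeal to the black-box result on convolution with tempered distributions and verify \eqref{eq:inv} via the distributional identity $(-\Delta-k^2)\Phi_k=\delta_0$; these are interchangeable implementations of the same idea.
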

\begin{proof} It follows from \eqref{eq:Phidef} and the definition  and asymptotic properties of the Hankel function \cite[(10.2.2)-(10.2.5)]{NIST} that $\Phi(x,y)=G(x-y)$, with $G\in L^1(\R^n)+L^\infty(\R^n)$, so that \eqref{eq:Newt} can be written as $\cA_k\phi = G*\phi=\phi*G$, where $*$ is convolution (e.g., \cite[p.~58]{mclean2000strongly}). By  Young's inequality (e.g., \cite[Thm.~3.2 and its proof]{mclean2000strongly}), the mapping $\phi\mapsto G*\phi$ is continuous from $\cS(\R^n)\to C(\R)$, providing the unique extension of the definition \eqref{eq:Newt} from $\phi\in C_0^\infty(\R^n)$ to $\phi\in \cS(\R^n)$. Further, by standard Leibniz-rule arguments (cf.~\cite[Thm.~3.3]{mclean2000strongly}), $\cA_k\phi\in C^m(\R^n)$, for every $m\in \N$ and $\phi\in \cS(\R^n)$, with $\partial^\alpha(\cA_k\phi) = (\partial^\alpha \phi)*G$, for every multi-index $\alpha=(\alpha_1,\ldots,\alpha_m)\in \{1,\ldots,n\}^m$, so that, again by Young's inequality, $\cA_k:\cS(\R^n)\to C^\infty(\R^n)$ is continuous. The continuity of this mapping, and that \eqref{eq:inv} holds for the dense subspace $C_0^\infty(\R^n)$ of $\cS(\R^n)$, implies that \eqref{eq:inv} holds also for $\phi\in \cS(\R^n)$.
\end{proof}

It follows from the observations in the paragraph above Lemma \ref{lem:map} that, if $u:=\cA_k \phi$ with $\phi\in H^{-1}_\Gamma$, then $u\in \Hol(\R^n)$, so that $u^t:= u+u^i\in \Hol(\R^n)$, and that $u|_\Omega\in \Hol(\Omega)$ satisfies the exterior sound-soft scattering problem if and only if $u^t|_\Omega\in \Hol_0(\Omega)$.  Further, where $\Gamma^c$ is defined by \eqref{eq:Omega*}, $u^t|_\Omega\in \Hol_0(\Omega)$ if  $u^t|_{\Gamma^c}\in \Hol_0(\Gamma^c)$, since $\Omega\subset \Gamma^c$ and $\partial \Omega = \partial O\subset \Gamma$, so that $\partial \Omega\subset \partial \Gamma^c$.  Let
$$
C^\infty_{0,\Gamma} := \{\chi\in C_0^\infty(\R^n): \chi=1 \mbox{ in a neighbourhood of }\Gamma\},
$$
and note that
\begin{align*}
u^t|_{\Gamma^c}\in \Hol_0(\Gamma^c) &\Leftrightarrow (\chi u^t)|_{\Gamma^c}\in H_0^1(\Gamma^c), \; \forall \chi\in C_0^\infty(\R^n)\\
& \Leftrightarrow  (\chi u^t)|_{\Gamma^c}\in H_0^1(\Gamma^c), \; \mbox{ for some } \chi\in C^\infty_{0,\Gamma},
\end{align*}
since  $(\chi u^t)|_{\Gamma^c}\in H_0^1(\Gamma^c)$, for all $\chi\in C^\infty_{0,\Gamma}$, if and only if  $(\chi u^t)|_{\Gamma^c}\in H_0^1(\Gamma^c)$, for one $\chi\in C^\infty_{0,\Gamma}$.
Thus, fixing  some $\chi\in C^\infty_{0,\Gamma}$, it holds that $u^t|_\Omega\in \Hol_0(\Omega)$ if  $(\chi u^t)|_{\Gamma^c}\in H_0^1(\Gamma^c)$, which holds if $\chi u^t\in \widetilde H^1(\Gamma^c)$.
Let $P:H^1(\R^n)\to H^1(\R^n)$ be some continuous projection operator with kernel $\widetilde H^1(\Gamma^c)$ %, so that $P:= I-Q$ is also a projection operator, with kernel $\widetilde H^1(\Gamma^c)$ 
and range $R(P)$ that is a closed subspace of $H^1(\R^n)$ that we may identify with $(H_\Gamma^{-1})^*$ through the isomorphism \eqref{eq:dual2}. Then  $\chi u^t\in \widetilde H^1(\Gamma^c)$ if and only if $P(\chi u^t) = 0$.

 We have shown all but the last sentence of the following result (cf.~\cite[Thm.~3.4, Rem.~3.8]{caetano2025integral}). Note that the definition \eqref{eq:Akdef2} of $A_k$ in this result  is independent of the choice of $\chi$, since $(\chi_1-\chi_2)\Psi \in \widetilde H^1(\Gamma^c)$, so that $P((\chi_1-\chi_2)\Psi)=0$, for all $\Psi\in \Hol(\R^n)$ and $\chi_1,\chi_2 \in C^\infty_{0,\Gamma}$. Thus the following proposition and all subsequent results hold for every  choice of $\chi \in C^\infty_{0,\Gamma}$. We will comment on the choice of $P$ in \S\ref{sec:choice1} below. In \cite{caetano2025integral} $P$ is chosen to be orthogonal projection onto  $R(P)=\widetilde H^1(\Gamma^c)^\perp$, with the orthogonality that of $H^1(\R^n)$ equipped with its standard norm \eqref{eq:unorm}.

\begin{proposition} \label{prop:ie1} 
Suppose that the compact set $\Gamma\subset \R^n$ satisfies \eqref{eq:GamRest} and that $\phi\in H^{-1}_\Gamma$. Then $u|_\Omega$, where $u:=\cA_k \phi$, satisfies the exterior sound-soft scattering problem if 
\begin{equation} \label{eq:iemain2}
A_k\phi = g := -P(\chi u^i),
\end{equation}
where $A_k:H^{-1}_{\Gamma}\to R(P)\subset H^1(\R^n)$ is defined by
\begin{equation} \label{eq:Akdef2}
A_k\psi:=P(\chi \mathcal{A}_k\psi), \qquad \psi \in H^{-1}_\Gamma.
\end{equation}
Conversely, if $u\in \Hol(\Omega)$ satisfies the exterior sound-soft scattering problem and the definition of $u$ is extended to $\R^n$ by setting $u:=-u^i$ on $\R^n\setminus \Omega$, then $u\in \Hol(\R^n)$ and $u=\cA_k\phi$, where $\phi:= -(\Delta+k^2)u\in H^{-1}_{\partial O}\subset H^{-1}_\Gamma$ satisfies \eqref{eq:iemain2}.
\end{proposition}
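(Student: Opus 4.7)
My plan is as follows. The forward implication has essentially been assembled in the paragraph preceding the proposition. If $\phi\in H^{-1}_\Gamma$ and $u:=\cA_k\phi$, then the mapping property \eqref{eq:mapping} gives $u\in \Hol(\R^n)$, while \eqref{eq:inv} together with $\supp(\phi)\subset \Gamma\subset O$ forces $(\Delta+k^2)u=0$ on $\Omega$, and $\cA_k\phi$ is radiating by the references already cited. So it remains only to check the boundary condition $u^t|_\Omega\in\Hol_0(\Omega)$. Fixing any $\chi\in C^\infty_{0,\Gamma}$, I would compute
\[
P(\chi u^t) \;=\; P(\chi\cA_k\phi)+P(\chi u^i) \;=\; A_k\phi+P(\chi u^i),
\]
which vanishes exactly when $A_k\phi=g$; by the reduction already given in the excerpt ($\chi u^t\in \widetilde H^1(\Gamma^c)$ implies $u^t|_\Omega\in\Hol_0(\Omega)$), this closes the forward direction.

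For the converse, I first extend $u$ to $\R^n$ by $u:=-u^i$ on $\R^n\setminus\Omega$ and verify the extended $u$ lies in $\Hol(\R^n)$. Set $w:=u+u^i$. With our extension, $w=0$ on $\R^n\setminus\Omega$, while on $\Omega$ we have $w=u^t$, so $\chi w|_\Omega\in H^1_0(\Omega)$ for every $\chi\in C_0^\infty(\R^n)$. By the zero-extension characterisation of $\widetilde H^1(\Omega)$ recalled in \S\ref{sec:sub}, $\chi w\in \widetilde H^1(\Omega)\subset H^1(\R^n)$ for every such $\chi$, so $w\in\Hol(\R^n)$ and hence so is $u=w-u^i$.

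Next I would define $\phi:=-(\Delta+k^2)u$ as a distribution on $\R^n$ and check $\supp(\phi)\subset\partial O$: on $\Omega$, $u$ satisfies the Helmholtz equation by hypothesis, and on $\mathrm{int}(O)$, $u=-u^i$ satisfies it as well (since $u^i$ is a Helmholtz solution in a neighbourhood of $O$). Since $u\in\Hol(\R^n)$, $\phi\in H^{-1}_{\mathrm{loc}}(\R^n)$, and together with the support condition this yields $\phi\in H^{-1}_{\partial O}\subset H^{-1}_\Gamma$. To identify $u=\cA_k\phi$, I would apply \eqref{eq:inv} to see that $v:=u-\cA_k\phi$ is an entire solution of the Helmholtz equation on $\R^n$; since both $u$ (via $u^t\in\Hol_0(\Omega)$ and $u^i$ growing at worst polynomially outside $O$, actually already radiating as $u+u^i\to u^i$ at infinity... ) and $\cA_k\phi$ satisfy the Sommerfeld condition, so does $v$, and Rellich's lemma together with unique continuation forces $v\equiv 0$. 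Finally, the IE itself follows by the same identity as in the forward direction: $\chi u^t=\chi w$ lies in $\widetilde H^1(\Omega)\subset \widetilde H^1(\Gamma^c)=\ker P$, so $P(\chi u^t)=0$ and hence $A_k\phi=P(\chi u)=-P(\chi u^i)=g$.

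The main obstacle I foresee is the gluing step in the converse: for general (non-Lipschitz) $\Omega$ the condition $u^t|_\Omega\in\Hol_0(\Omega)$ is \emph{not} a pointwise trace condition but the statement that $\chi u^t|_\Omega\in H_0^1(\Omega)$ for every $\chi\in C_0^\infty(\R^n)$, and one must pass from this intrinsic formulation to global $H^1_{\mathrm{loc}}$-regularity of the zero-extended function across $\partial O$. The key bridge is the zero-extension characterisation of $\widetilde H^1(\Omega)$, which is dimension-independent and therefore makes the argument go through for every $n\geq 2$ once one also checks, as the excerpt does, that the mapping and radiation properties of $\cA_k$ used above hold in all dimensions via \eqref{eq:Phidef} and the standard references cited around \eqref{eq:mapping}--\eqref{eq:inv}.
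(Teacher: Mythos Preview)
Your proposal is correct and follows essentially the same route as the paper: the forward direction is exactly the reduction already assembled before the proposition, and for the converse you (like the paper) zero-extend $u^t$ to land in $\widetilde H^1(\Omega)$, read off $\supp\phi\subset\partial O$, identify $u=\cA_k\phi$ by showing the difference is an entire radiating Helmholtz solution hence zero, and conclude $A_k\phi=g$ from $\chi u^t\in\ker P$. One small clean-up: your parenthetical justification that the extended $u$ satisfies the Sommerfeld condition is muddled---the correct reason is simply that $u|_\Omega$ satisfies \eqref{eq:src} by hypothesis (it is part of the definition of the scattering problem) and the extension modifies $u$ only on the bounded set $O$, so the asymptotic condition is unaffected.
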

\begin{proof} It remains to show the last sentence of the proposition. Extend the definition of $u$ to $\R^n$ in the way indicated, and set $u^t:=u+u^i$. Then $(\chi u^t)|_\Omega\in H_0^1(\Omega)$ and $u^t=0$ in $\R^n\setminus \Omega$, so that $\chi u^t\in \widetilde H^1(\Omega)$, so that $u\in \Hol(\R^n)$.  Defining $\phi:= -(\Delta+k^2)u\in H^{-1,\mathrm{loc}}(\R^n)$, we have that $\phi=0$ in $\Omega$ by \eqref{eq:he} and $\phi=-(\Delta+k^2)u^i=0$ by \eqref{eq:he0} in $\R^n\setminus \overline{\Omega}$. Thus $\supp(\phi) \subset \partial O \subset \Gamma$. Further, defining $v:=\cA_k \phi - u\in \Hol(\R^n)$, it follows from \eqref{eq:inv} that $\Delta v + k^2 v=0$ in $\R^n$, and also (see the arguments above Lemma \ref{lem:map}) $v$ satisfies the radiation condition \eqref{eq:src}, so by the uniqueness of solution of  the exterior scattering problem in the simplest case $O=\emptyset$, we have that $v=0$ so that $u=\cA_k\phi$. Thus $A_k \phi = P(\chi u) = P(\chi(u^t-u^i))=-P(\chi u^i)$, since $\chi u^t\in \widetilde H^1(\Omega)\subset \widetilde H^1(\Gamma^c)$. 
\end{proof}

Since the exterior sound-soft scattering problem has a unique solution for all $k>0$, the above proposition has the following corollary (cf.~\cite[Prop.~3.6]{caetano2025integral}). 

\begin{corollary} \label{cor:exists}
Suppose that the compact set $\Gamma\subset \R^n$ satisfies \eqref{eq:GamRest}. Then \eqref{eq:iemain2} has a solution $\phi\in H^{-1}_{\partial O} \subset H^{-1}_\Gamma$, for all $k>0$. 
\end{corollary}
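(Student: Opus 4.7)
The plan is to deduce this corollary essentially immediately from the converse direction of Proposition \ref{prop:ie1}, together with the well-posedness of the exterior sound-soft scattering problem that was recalled in \S\ref{sec:intro}. There is really nothing new to do; the strategy is to invoke existence for the PDE problem to manufacture the required density $\phi$.

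In more detail, I would proceed as follows. First, fix $k>0$ and an incident field $u^i\in \Hol(\R^n)$ satisfying \eqref{eq:he0} in a neighbourhood of $O$. By the standard well-posedness result for the exterior Dirichlet scattering problem quoted in \S\ref{sec:intro} (via Rellich's lemma for uniqueness and a limiting-absorption argument for existence, as in \cite{wilcox1975,NR87,mclean2000strongly}), there exists $u\in \Hol(\Omega)$ solving \eqref{eq:he}, the Sommerfeld radiation condition \eqref{eq:src}, and with $u^t := u^i+u\in \Hol_0(\Omega)$. Second, I would apply the converse half of Proposition \ref{prop:ie1}: extending $u$ to $\R^n$ by setting $u:=-u^i$ on $\R^n\setminus \Omega$ and defining $\phi := -(\Delta+k^2)u$, that proposition tells us that $\phi\in H^{-1}_{\partial O}\subset H^{-1}_\Gamma$ and that $A_k\phi = -P(\chi u^i)=g$, which is exactly \eqref{eq:iemain2}.

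Since $g$ in \eqref{eq:iemain2} depends on $u^i$ but not on the solution $u$, and since existence for the scattering problem holds for every admissible $u^i$, this argument yields solvability of $A_k\phi = g$ for every $k>0$ and every right-hand side of the form $g=-P(\chi u^i)$. I do not foresee any obstacle: all analytic work has already been carried out inside Proposition \ref{prop:ie1}, where the compatibility between the PDE problem and the integral equation \eqref{eq:iemain2} is established, so here the corollary is a one-line consequence. The only tiny point worth spelling out is that the containment $H^{-1}_{\partial O}\subset H^{-1}_\Gamma$ is immediate from $\partial O\subset \Gamma$ and the definition of $H^{-1}_F$ in \S\ref{sec:sub}, so the solution $\phi$ produced is automatically an element of $H^{-1}_\Gamma$ as required.
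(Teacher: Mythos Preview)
Your proof is correct and follows exactly the approach indicated in the paper, which simply notes that the corollary follows from Proposition~\ref{prop:ie1} together with the well-posedness of the exterior sound-soft scattering problem. There is nothing to add.
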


Invertibility of $A_k:H^{-1}_\Gamma\to R(P)$ is studied in \cite{caetano2025integral} via a variational formulation of \eqref{eq:iemain2}. Define, in terms of the duality pairing \eqref{eq:dual}, the sesquilinear form $a_k(\cdot,\cdot)$ on  $H^{-1}_\Gamma \times H^{-1}_\Gamma$ associated to $A_k$ by
\begin{equation} \label{eq:sesqui}
a_k(\varphi,\psi) := \langle A_k\varphi,\psi\rangle, \qquad \varphi,\psi\in H^{-1}_\Gamma,
\end{equation}
for $k>0$. Since $\langle \varphi,\psi\rangle=0$ if $\varphi\in \widetilde H^1(\Gamma^c)$ and $\psi\in H^{-1}_\Gamma$, and $R(Q)=\widetilde H^1(\Gamma^c)$, where $Q:=I-P$, we see that
\begin{equation} \label{eq:ak2}
a_k(\varphi,\psi) = \langle P(\chi \cA_k\varphi),\psi\rangle = \langle \chi \cA_k\varphi,\psi\rangle, \qquad \varphi,\psi\in H^{-1}_\Gamma,
\end{equation}
so that the choice of $P$ does not affect the definition of $a_k(\cdot,\cdot)$.
Further, since $A_k\phi\in R(P)$ and $R(P)$ is a realisation of $(H^{-1}_\Gamma)^*$ through the isomorphism \eqref{eq:dual2}, $\phi\in H^{-1}_\Gamma$ satisfies \eqref{eq:iemain2} if and only if it satisfies the variational equation
\begin{equation} \label{eq:var1}
a_k(\phi,\psi) = \langle g,\psi\rangle, \qquad \forall \psi\in H_\Gamma^{-1},
\end{equation}
which can be written equivalently as
\begin{equation} \label{eq:var2}
\langle \chi \cA_k\varphi,\psi\rangle = -\langle \chi u^i,\psi\rangle, \qquad \forall \psi\in H_\Gamma^{-1}.
\end{equation}
Since $P$ does not appear in \eqref{eq:var2} the solution set of equation \eqref{eq:iemain2}, which is equivalent to \eqref{eq:var2}, is independent of the choice of $P$.

The following result, showing that $A_k$ is a compact perturbation of a continuous, coercive operator, is \cite[Lem.~3.3]{caetano2025integral}. (For detail of our terminology, of compactness and coercivity of operators and associated sesquilinear forms, see, e.g., 
\cite[\S2.2]{CWHeMoBe:21}; in particular we say that an operator $A_k:H_\Gamma^{-1}\to R(P)$ is coercive if the associated sesquilinear form, defined by \eqref{eq:sesqui}, is coercive.) Because the result in  \cite{caetano2025integral}, and the references cited there to justify parts of the proof, are claimed only for $n=2,3$, we provide an independent proof of this proposition (for all $n\geq 2$) modelled in part on \cite[Thm.~6.1]{mclean2000strongly}. 
\begin{proposition} \label{prop:coer}
Suppose that the compact set $\Gamma\subset \R^n$ satisfies \eqref{eq:GamRest}. 
Then, for each $k>0$, the sesquilinear form $a_k(\cdot, \cdot)$ is continuous and compactly perturbed coercive on  $H^{-1}_\Gamma \times H^{-1}_\Gamma$, indeed, for some constants $C_k, \alpha_k > 0$, and some compact sesquilinear form $\tilde a_k(\cdot, \cdot)$,
\begin{align*}
|a_k(\varphi,\psi)| &\leq C_k \|\varphi\|_{H^{-1}(\R^n)}\, \|\psi\|_{H^{-1}(\R^n)},\\  \rea(a_k(\psi,\psi)-\tilde a_k(\psi,\psi)) &\geq \alpha_k \|\psi\|^2_{H^{-1}(\R^n)}, \qquad \forall \varphi,\psi\in H^{-1}_\Gamma.
\end{align*}
\end{proposition}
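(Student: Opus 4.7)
The plan is to exploit the fact that the outgoing Helmholtz fundamental solution differs from a convenient elliptic model kernel only by a smoother correction. Concretely, let $B:=(I-\Delta)^{-1}$ denote the Bessel potential, defined on $\cS^*(\R^n)$ as the Fourier multiplier with symbol $(1+|\xi|^2)^{-1}$; then $B:H^{-1}(\R^n)\to H^1(\R^n)$ is an isometric isomorphism and satisfies $\Delta B\varphi=B\varphi-\varphi$. Setting $b(\varphi,\psi):=\langle\chi B\varphi,\psi\rangle$ and $\tilde a_k:=a_k-b$, I would show that $b$ is coercive and $\tilde a_k$ compact on $H^{-1}_\Gamma\times H^{-1}_\Gamma$. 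Continuity of $a_k$ is immediate from \eqref{eq:mapping} and the Fourier pairing \eqref{eq:dual}: for $\varphi\in H^{-1}_\Gamma$, $\cA_k\varphi\in H^{1,\mathrm{loc}}(\R^n)$ with $\|\chi\cA_k\varphi\|_{H^1(\R^n)}\leq C_k\|\varphi\|_{H^{-1}(\R^n)}$ by \eqref{eq:mapping}, and Cauchy--Schwarz on \eqref{eq:dual} delivers the bound on $|a_k(\varphi,\psi)|$.

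For coercivity of $b$, note that $(\chi-1)B\varphi\in H^1(\R^n)$ vanishes in an open neighbourhood of $\Gamma\supset\supp\psi$, so, by the definition of distributional support, $\langle(\chi-1)B\varphi,\psi\rangle=0$ for every $\psi\in H^{-1}_\Gamma$. Hence $b(\varphi,\psi)=\langle B\varphi,\psi\rangle$, and expressing this via \eqref{eq:dual} yields
\[
b(\varphi,\varphi)=\int_{\R^n}\frac{|\hat\varphi(\xi)|^2}{1+|\xi|^2}\,\rd\xi=\|\varphi\|_{H^{-1}(\R^n)}^2,
\]
so $\rea b(\varphi,\varphi)=\|\varphi\|_{H^{-1}(\R^n)}^2$ and one may take $\alpha_k=1$. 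For compactness of $\tilde a_k$ it suffices to show that $T:=\chi(\cA_k-B):H^{-1}_\Gamma\to H^1(\R^n)$ is compact, since then the induced operator $H^{-1}_\Gamma\to (H^{-1}_\Gamma)^*$ factors through a compact map. Combining \eqref{eq:inv} with $\Delta B\varphi=B\varphi-\varphi$ gives the key identity
\[
(\Delta+k^2)(\cA_k-B)\varphi=-(1+k^2)B\varphi\quad\text{in }\mathcal{D}'(\R^n),
\]
whose right-hand side lies in $H^1(\R^n)$ with norm controlled by $\|\varphi\|_{H^{-1}(\R^n)}$. Writing $w:=(\cA_k-B)\varphi\in H^{1,\mathrm{loc}}(\R^n)$, this gives $\Delta w\in \Lloc(\R^n)$ with local norm controlled by $\|\varphi\|_{H^{-1}(\R^n)}$. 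A localised interior elliptic regularity estimate then produces $\chi w\in H^2(\R^n)$ with $\|\chi w\|_{H^2(\R^n)}\leq C_k\|\varphi\|_{H^{-1}(\R^n)}$; since the range of $T$ lies in the subspace of $H^2(\R^n)$ of functions supported in $\supp\chi$, Rellich--Kondrashov delivers compactness of $T$.

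The main technical obstacle is this elliptic regularity upgrade: because $w$ is only $H^{1,\mathrm{loc}}$ and not globally in $L^2$, I cannot apply $(I-\Delta)^{-1}$ to the governing equation globally, and must instead use interior regularity with a cutoff $\eta\in C_0^\infty(\R^n)$ taken equal to $1$ on $\supp\chi$, tracking constants carefully in $k$. Once that local $H^2$-estimate is in place, the distributional-support localisation for the pairing in the coercivity step, the Parseval identification of $b(\varphi,\varphi)$, and the Rellich embedding step are all routine, and the argument works uniformly for all dimensions $n\geq 2$ since everything is Fourier-analytic and the Bessel comparison kernel $B$ is dimension-agnostic.
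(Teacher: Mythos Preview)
Your argument is correct, and it takes a genuinely different route from the paper. The paper constructs a bespoke coercive comparison operator $\cA_k^0$ whose Fourier symbol $\alpha_k^0(\xi)$ agrees with $(|\xi|^2-k^2)^{-1}$ for $|\xi|$ large and is smoothed off near $|\xi|=k$; the remainder $\cA_k-\cA_k^0$ then equals $\cA_k K$ where $K$ has \emph{compactly supported} Fourier symbol, so is smoothing to all orders, and compactness drops out from $\chi\cA_k K:H^{-1}_\Gamma\to H^2_{\supp\chi}\hookrightarrow H^1$. You instead take the simpler Bessel potential $B=(I-\Delta)^{-1}$ as the coercive part, which gives the exact identity $b(\varphi,\varphi)=\|\varphi\|_{H^{-1}}^2$ with no construction needed, but at the cost that $\cA_k-B$ is not a Fourier multiplier at all; you recover the one extra order of smoothing needed for Rellich via the PDE $(\Delta+k^2)w=-(1+k^2)B\varphi$ and a localised interior regularity estimate. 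Your support argument for dropping the cutoff $\chi$ in $b$ is clean and your identification of the interior regularity step as the only nontrivial point is accurate; that step is standard (e.g.\ \cite[Thm.~4.16]{mclean2000strongly}) and works in all dimensions. The paper's route buys a slightly slicker compactness argument (manifest infinite smoothing on the Fourier side, no PDE regularity needed); yours buys a simpler and more canonical coercive part with $\alpha_k=1$ independent of $k$.
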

\begin{proof} The first estimate follows immediately from \eqref{eq:mapping}, or from our proof below of the second estimate. To see the second estimate, we first make a splitting of $\cA_k$ into a part $\cA_k^0:H^{-1}(\R^n)\to H^1(\R^n)$ that is continuous and coercive, and a remainder that is a smoothing operator.
%$$
%a_k(\varphi,\psi) := \langle \chi \cA_k\varphi,\psi\rangle, \qquad \varphi,\psi\in H^{-1}_\Gamma.
%$$
Let $\cF$ denote the Fourier transform operator $u\mapsto \hat u$ on the space $\cS^*(\R^n)$ of tempered distributions; recall that $\cF$ is an isomorphism on both $\cS(\R^n)$ and $\cS^*(\R^n)$ (e.g., \cite{mclean2000strongly}). If $\psi\in \cS(\R^n)$, then
$\cF((\Delta + k^2)\psi)(\xi) = (k^2-|\xi|^2)\hat\psi(\xi)$, for $\xi\in \R^n$. Choose $\sigma\in C_0^\infty(\R^n)$ such that $0\leq \sigma(\xi)\leq 1$, for $\xi\in \R^n$, and such that $\sigma(\xi)=1$, $|\xi|\leq \sqrt{2}k$, $\sigma(\xi)\geq 1/2$, $|\xi|\leq 2k$, $\sigma(\xi)\leq 1/2$, $|\xi|\geq 2k$. Let $\cA_k^0$ be a regularised version of $-(\Delta+k^2)^{-1}$, defined by
$$
\cA_k^0 \psi = \cF^{-1}(\alpha_k^0\cF\psi), \qquad \psi\in C_0^\infty(\R^n), 
$$
where $\alpha_k^0\in C^\infty(\R^n)$ is given by
$$
\alpha_k^0(\xi) := \sigma(\xi) + (1-\sigma(\xi))(|\xi|^2-k^2)^{-1}, \qquad \xi\in \R^n,
$$
so that, for some $\xi_0>2k$,  $\alpha_k^0(\xi) = (|\xi|^2-k^2)^{-1}$, for $|\xi|>\xi_0$.
Then, for some constants $C,\alpha>0$,
$$
C(1+|\xi|^2)^{-1} \geq |a_k^0(\xi)| \geq \rea a_k^0(\xi) \geq \alpha(1+|\xi|^2)^{-1}, \qquad \xi \in \R^n,
$$
so that, for $s\in \R$, $\cA^0_k:H^s(\R^n) \to H^{s+2}(\R^n)$ is continuous. Further, the sesquilinear form on $H^{-1}(\R^n) \times H^{-1}(\R^n)$, defined by $a^0_k(\varphi,\psi) := \langle \cA^0_k\varphi,\psi\rangle$,  $\varphi,\psi\in H^{-1}(\R^n)$, is coercive  since, for $\psi\in C_0^\infty(\R^n)$, which is dense in $H^{-1}(\R^n)$, and where $(\cdot,\cdot)$ denotes the inner product on $L^2(\R^n)$,
$$
\rea a^0_k(\psi,\psi) = \rea(\cF^{-1}(\alpha_k^0\cF\psi),\psi)=\rea(\alpha_k^0\cF\psi,\cF\psi) \geq \alpha \|\psi\|_{H^{-1}(\R^n)}^2.
$$
Define $A_k^0:H^{-1}_\Gamma\to R(P)$ by $A_k^0\psi:= P(\chi \cA_k^0\psi)=P\cA_k^0\psi$, for $\psi\in H^{-1}_\Gamma$, and note that $a_k^0(\cdot,\cdot)$, restricted to $H^{-1}_\Gamma \times H^{-1}_\Gamma$, is coercive, and is the sesquilinear form associated to $A_k^0$, so that (by definition) $A_k^0$ is coercive. 

To complete the proof of the lemma it is enough to show that $A_k-A_k^0$ is compact. Note that multiplication by $\alpha_k^0$ is a continuous operator on $\cS(\R^n)$, so that $\cA_k^0:\cS(\R^n)\to \cS(\R^n)$ and is continuous. Thus, for $\psi\in C_0^\infty(\R^n)$, $\cA_k^0\psi\in \cS(\R^n)$, so that $\cF((\Delta + k^2)\cA^0_k\psi)(\xi) = (k^2-|\xi|^2)\alpha_k^0(\xi)\hat\psi(\xi)$, for $\xi\in \R^n$, 
and, where $\tau\in C_0^\infty(\R^n)$ is defined by $\tau(\xi) := \sigma(\xi)(1+k^2-|\xi|^2)$, $\xi\in \R^n$,
$$
(\Delta+k^2)\cA_k^0\psi = -\psi +K\psi, \quad \mbox{where} \quad   K\varphi:=  \cF^{-1}(\tau\cF\varphi), \qquad \psi \in C_0^\infty(\R^n).
$$
Now, for $s\in \R$,  $\cF:H^s_{\mathrm{comp}}(\R^n)\to C^\infty(\R^n)$ and is continuous; in particular, this holds for $s=-1$ so that $K:H^{-1}_{\mathrm{comp}}(\R^n)\to \cS(\R^n)$ and is continuous; in particular, $K\psi\in \cS(\R^n)$ for $\psi\in C_0^\infty(\R^n)$. Thus, 
multiplying both sides of the above equation by $\cA_k$ and using Lemma \ref{lem:map}, we see that
\begin{equation} \label{eq:Akrep}
\cA_k\psi = \cA_k^0 \psi + \cA_k K\psi, \qquad \psi \in C_0^\infty(\R^n).
\end{equation}
Now $\cA_k$, $\cA_k^0$, and $\cA_k K$ are continuous as mappings $H^{-1}_{\mathrm{comp}}(\R^n)\to \Hol(\R^n)$, indeed $\cA_k K$ is continuous as a mapping $H^{-1}_{\mathrm{comp}}(\R^n)\to C^\infty(\R^n)$, and so as a mapping $H^{-1}_{\mathrm{comp}}(\R^n)\to H^{s,\mathrm{loc}}(\R^n)$, for all $s\in \R$. This implies, since $C_0^\infty(\R^n)$ is dense in $H^{-1}_{\mathrm{comp}}(\R^n)$, that \eqref{eq:Akrep} holds in fact for all $\psi \in H^{-1}_{\mathrm{comp}}(\R^n)$, in particular, for all $\psi \in H^{-1}_{\Gamma}(\R^n)$. Thus $A_k = A_k^0+P\chi \cA_k K$, and  $\chi\cA_k K:H^{-1}_\Gamma\to H^2(\R^n)$ is continuous with range in $H^2_{\supp(\chi)}$, so that, by the compactness of the embedding $H^2_{\supp(\chi)}\subset H^1_{\supp(\chi)}$ \cite[Thm.~3.27(i)]{mclean2000strongly}, $\chi\cA_k K:H^{-1}_\Gamma\to H^1(\R^n)$ is compact, so that $P\chi \cA_k K:H^{-1}_\Gamma\to R(P)$ is compact.
\end{proof}

The following theorem, one part of \cite[Thm.~3.4]{caetano2025integral}, is, in part, a corollary of the above result, which implies that $A_k=A_k^0+\widetilde A_k$, where $\widetilde A_k$ is compact and $A_k^0$ is continuous and coercive. This in turn implies, by the Lax-Milgram lemma  (e.g., \cite[Lem.~2.32]{mclean2000strongly}), that $A_k^0$ is invertible, so that  (e.g., \cite[Thm.~2.33]{mclean2000strongly}) $A_k$ is Fredholm of index zero. The proof in \cite[Thm.~3.4]{caetano2025integral} that $A_k$ is injective (and so invertible) if and only if  $k^2\not\in \sigma(-\Delta_D(\Omega_-))$ is claimed only for $n=2,3$, but is valid for all $n\geq 2$. Recall from \S\ref{sec:intro} that 
\begin{equation} \label{eq:Omegam}
\Omega_-:= O\setminus \Gamma,
\end{equation} 
and that $\sigma(-\Delta_D(\Omega_-)):= \emptyset$ if $\Omega_-$ is empty.

\begin{theorem} \label{thm:invert}
Suppose that the compact set $\Gamma\subset \R^n$ satisfies \eqref{eq:GamRest}. Then $A_k:H^{-1}_\Gamma\to  R(P)$ is Fredholm of index zero for $k>0$. Further, $A_k$ is injective, and so invertible, if and only if $k^2\not\in \sigma(-\Delta_D(\Omega_-))$. 
\end{theorem}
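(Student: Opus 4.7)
The theorem splits into the Fredholm claim and the characterisation of injectivity. For the Fredholm claim I would invoke Proposition~\ref{prop:coer}, which writes $A_k=A_k^0+\widetilde A_k$ with $A_k^0:H_\Gamma^{-1}\to R(P)$ continuous and coercive, and $\widetilde A_k$ compact. Using the identification $R(P)\cong (H_\Gamma^{-1})^*$ from \eqref{eq:dual2}, the Lax--Milgram lemma gives that $A_k^0$ is an isomorphism, hence Fredholm of index zero, and adding the compact $\widetilde A_k$ preserves the index. Since the index is zero, $A_k$ is invertible iff it is injective, so it remains to show $\ker A_k=\{0\}$ iff $k^2\notin\sigma(-\Delta_D(\Omega_-))$.

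For the failure of injectivity when $k^2\in\sigma(-\Delta_D(\Omega_-))$, I would take a nonzero Dirichlet eigenfunction $v\in H_0^1(\Omega_-)$ with $(\Delta+k^2)v=0$ in $\Omega_-$, extend by zero to $\tilde v\in\widetilde H^1(\Omega_-)\subset\widetilde H^1(\Gamma^c)\subset H^1(\R^n)$, and set $\phi:=-(\Delta+k^2)\tilde v\in H^{-1}(\R^n)$. Since $\tilde v$ solves Helmholtz in $\Omega_-$ and vanishes outside $\overline{\Omega_-}$, one has $\supp\phi\subset\partial\Omega_-\subset\partial O\cup\Gamma\subset\Gamma$ (using $\partial O\subset\Gamma$), so $\phi\in H_\Gamma^{-1}$. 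As $\tilde v$ is compactly supported it trivially satisfies the Sommerfeld radiation condition, and uniqueness of radiating solutions to $-(\Delta+k^2)w=\phi$ (Rellich's lemma, or the converse part of the proof of Proposition~\ref{prop:ie1} applied with $O=\emptyset$) forces $\cA_k\phi=\tilde v$. Hence $\chi\cA_k\phi=\chi\tilde v\in\widetilde H^1(\Gamma^c)$ and $A_k\phi=P(\chi\cA_k\phi)=0$. Finally $\phi\neq 0$: otherwise $(\Delta+k^2)\tilde v=0$ on $\R^n$ with $\tilde v$ compactly supported would force $\tilde v=0$ via Fourier transform (since $\hat{\tilde v}$ is analytic and supported on $|\xi|=k$), contradicting $v\neq 0$.

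For the converse, assume $k^2\notin\sigma(-\Delta_D(\Omega_-))$ and $A_k\phi=0$, and set $u:=\cA_k\phi\in\Hol(\R^n)$, so that $\chi u\in\widetilde H^1(\Gamma^c)$ and $u$ satisfies the Helmholtz equation and the Sommerfeld radiation condition on $\R^n\setminus\Gamma=\Omega\cup\Omega_-$. Because functions in $C_0^\infty(\Gamma^c)$ have supports at positive distance from $\Gamma\supset\partial O=\partial\Omega$ and from $\partial\Omega_-\subset\Gamma$, restriction to $\Omega$ and $\Omega_-$ maps $\widetilde H^1(\Gamma^c)$ continuously into $H_0^1(\Omega)$ and $H_0^1(\Omega_-)$; combined with $\chi\equiv 1$ in a neighbourhood of $\Gamma$ this upgrades to $u\in\Hol_0(\Omega)$ and $u|_{\Omega_-}\in H_0^1(\Omega_-)$. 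Uniqueness of the exterior sound-soft scattering problem with zero incident field gives $u\equiv 0$ on $\Omega$, and $k^2\notin\sigma(-\Delta_D(\Omega_-))$ applied to the Dirichlet Helmholtz problem on $\Omega_-$ gives $u\equiv 0$ on $\Omega_-$; combined with $u=0$ a.e.\ on $\Gamma$ (encoded in $\chi u\in\widetilde H^1(\Gamma^c)$), this forces $u=0$ a.e.\ on $\R^n$ and hence $\phi=-(\Delta+k^2)u=0$. The main obstacle is this converse direction: $\Gamma$ may be rough or even fractal so no classical trace theory is available, and the argument must proceed entirely at the Sobolev-space level, exploiting the separation of $\Omega$ from $\Omega_-$ together with $\chi\equiv 1$ near $\Gamma$ to extract from $\chi u\in\widetilde H^1(\Gamma^c)$ the Dirichlet conditions needed to invoke exterior uniqueness on $\Omega$ and interior uniqueness on $\Omega_-$.
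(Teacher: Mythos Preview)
Your proposal is correct and matches the paper's approach: the Fredholm claim via Proposition~\ref{prop:coer}, Lax--Milgram, and stability of the index under compact perturbation is exactly the argument the paper gives, and your injectivity characterisation (extending a Dirichlet eigenfunction by zero for one direction; combining exterior-scattering uniqueness on $\Omega$ with interior Dirichlet uniqueness on $\Omega_-$ for the other, after reading off $u\in\Hol_0(\Omega)$ and $u|_{\Omega_-}\in H_0^1(\Omega_-)$ from $\chi u\in\widetilde H^1(\Gamma^c)$) is the natural argument that the paper defers to \cite[Thm.~3.4]{caetano2025integral}. The key containments $\partial\Omega=\partial O\subset\Gamma$ and $\partial\Omega_-\subset\Gamma$, together with $\chi\equiv 1$ near $\Gamma$, are precisely what makes the restriction step work without any trace theory, as you note.
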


Of course this result implies that $A_k$ is invertible for all $k>0$ in the case that $\Gamma=O$. Combined with Corollary \ref{cor:exists} and Proposition \ref{prop:ie1}, Theorem \ref{thm:invert} has the following straightforward corollary.

\begin{corollary} \label{cor:IEfinal}
Suppose that the compact set $\Gamma\subset \R^n$ satisfies \eqref{eq:GamRest}. Then \eqref{eq:iemain2} has a solution $\phi\in H^{-1}_{\partial O} \subset H^{-1}_\Gamma$, for all $k>0$, indeed this is the unique solution $\phi\in H^{-1}_\Gamma$ of \eqref{eq:iemain2} if $k^2\not\in \sigma(-\Delta_D(\Omega_-))$, while \eqref{eq:iemain2} has infinitely many solutions  $\phi\in H^{-1}_\Gamma$ if $k^2\in \sigma(-\Delta_D(\Omega_-))$. For all $k>0$, if  $\phi\in H^{-1}_\Gamma$ satisfies \eqref{eq:iemain2} and $u=\cA_k\phi$, then $u|_\Omega$ is the unique solution of the exterior sound-soft scattering problem.
\end{corollary}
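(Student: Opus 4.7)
The plan is to assemble the corollary from three results already proved in the excerpt, namely Corollary \ref{cor:exists}, Theorem \ref{thm:invert}, and Proposition \ref{prop:ie1}, together with the classical well-posedness of the exterior sound-soft scattering problem recalled in \S\ref{sec:intro}. I anticipate no real obstacle: this is a routine synthesis.

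First, for the existence of some $\phi\in H^{-1}_{\partial O}$ satisfying \eqref{eq:iemain2}, I would simply invoke Corollary \ref{cor:exists}; the inclusion $H^{-1}_{\partial O}\subset H^{-1}_\Gamma$ follows from $\partial O\subset \Gamma$ and the definition of $H^s_F$. For the uniqueness statement, when $k^2\not\in \sigma(-\Delta_D(\Omega_-))$, Theorem \ref{thm:invert} gives that $A_k:H^{-1}_\Gamma \to R(P)$ is invertible, so the solution in $H^{-1}_\Gamma$ is unique.

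To show that \eqref{eq:iemain2} has infinitely many solutions when $k^2\in \sigma(-\Delta_D(\Omega_-))$, I would use that, by Theorem \ref{thm:invert}, $A_k$ is Fredholm of index zero and not injective, so $\ker A_k\subset H^{-1}_\Gamma$ is a nontrivial (finite-dimensional) subspace. Letting $\phi_0\in H^{-1}_{\partial O}$ be the solution supplied by Corollary \ref{cor:exists} and fixing any nonzero $\eta\in \ker A_k$, the family $\phi_0+c\eta\in H^{-1}_\Gamma$ solves \eqref{eq:iemain2} for every $c\in \C$, giving infinitely many solutions. (Since $\phi_0\in H^{-1}_\Gamma$, we stay in the admissible space throughout.)

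Finally, for any solution $\phi\in H^{-1}_\Gamma$ of \eqref{eq:iemain2}, the forward direction of Proposition \ref{prop:ie1} tells us that $u:=\cA_k\phi$ satisfies, upon restriction to $\Omega$, the exterior sound-soft scattering problem; that $u|_\Omega$ is then \emph{the} (unique) scattering solution is the well-posedness of this boundary value problem, recalled at the start of \S\ref{sec:intro}. A brief remark worth making is that, in the non-unique case $k^2\in \sigma(-\Delta_D(\Omega_-))$, distinct solutions $\phi_1,\phi_2$ of the IE must satisfy $\cA_k(\phi_1-\phi_2)=0$ on $\Omega$, even though $\cA_k(\phi_1-\phi_2)$ need not vanish on $\Omega_-$; this is consistent with the reformulation because the IE only controls behaviour outside $\Gamma$.
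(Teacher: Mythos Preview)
Your proof is correct and follows essentially the same route as the paper, which simply states that the corollary is a straightforward combination of Corollary~\ref{cor:exists}, Proposition~\ref{prop:ie1}, and Theorem~\ref{thm:invert}. You have merely spelled out the details (in particular the Fredholm argument for the infinitely-many-solutions case) that the paper leaves implicit.
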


%\begin{remark}[The choice of $P$] \label{rem:choice1} 

\subsection{The choice of $P$} \label{sec:choice1}
There are many choices for the projection operator in the above formulation and the definition \eqref{eq:Akdef2} of $A_k$. For each  choice of $P$, the range $R(P)$ is a realisation of $(H_\Gamma^{-1})^*$, through the isomorphism \eqref{eq:dual2}.  While each choice of $P$ leads to a different definition of $A_k$, a consequence of Theorem \ref{thm:invert} is that, if $A_k$ is invertible for one choice of $P$ it is invertible for every  $P$. Further, as already noted, it follows from the equivalence of \eqref{eq:iemain2} with the variational formulation \eqref{eq:var2}, %if $\phi\in H^{-1}_\Gamma$ is a solution of \eqref{eq:iemain2} for one choice of $P$, it is a solution for every $P$, i.e., 
that the solution set of \eqref{eq:iemain2} is independent of $P$. 

But the choice of $P$ is important in the context of this paper because it influences the norm of $A_k$ and its inverse. As discussed in \S\ref{sec:sub}, there is precisely  one choice of $P$ for which $R(P)$ is a unitary realisation of $(H^{-1}_\Gamma)^*$ in the sense of  \cite[\S2.1]{ChHeMo:17}, i.e., for which the isomorphism \eqref{eq:dual2} is isometric. When $H^{\pm 1}(\R^n)$ are equipped with the norms $\|\cdot\|_{H_k^{\pm 1}(\R^n)}$ this is the choice $P=P_k$, where $P_k$ is the orthogonal projection operator
$$
P_k:H^1(\R^n)\to \widetilde H^{1}(\Gamma^c)^{\perp_k}, %= (H^{-1}_\Gamma)^*
$$
with $\perp_k$ as defined in \S\ref{sec:sub}, so that $R(P)=\widetilde H^{1}(\Gamma^c)^{\perp_k}$.
%, and $Q$ is an orthogonal projection. 
In the case that $P=P_t$, for some $t>0$, we will denote $A_k$ by $A_{t,k}$, i.e.,
\begin{equation} \label{eq:Akdef}
A_{t,k}\psi:=P_t(\chi \mathcal{A}_k\psi), \qquad \psi \in H^{-1}_\Gamma, \quad t,k>0.
\end{equation}

In \cite{caetano2025integral} the spaces $H^{\pm 1}(\R^n)$ are equipped with their usual norms \eqref{eq:unorm}, i.e., the norms $\|\cdot\|_{H_1^{\pm 1}(\R^n)}$, and only the choice $P=P_{1}$ is considered, in which case $A_k=A_{1,k}$. This will be one choice that will be our focus. The other choice that will be our focus is the choice $P=P_k$, so that $A_k=A_{k,k}$ and $R(P)$ is a unitary realisation of $(H^{-1}_\Gamma)^*$ when $H^{\pm 1}(\R^n)$ are equipped with the wavenumber dependent norms $\|\cdot\|_{H_k^{\pm 1}(\R^n)}$. Importantly, with these choices $A_k=A_{1,k}$ and $A_k=A_{k,k}$, the norms of $A_k$ and $A_k^{-1}$ coincide with the continuity constant and the reciprocal of the inf-sup constant of $a_k(\cdot,\cdot)$ when $H^{\pm 1}(\R^n)$ is equipped with the norm $\|\cdot\|_{H^{\pm 1}_t(\R^n)}$, for $t=1,k$, respectively.  Indeed, for $t,k>0$,
\begin{align} \label{eq:samenorms}
\|A_{t,k}\|_t &= \sup_{0\neq\varphi\in H_\Gamma^{-1}}  \sup_{0\neq \psi\in H_\Gamma^{-1}} \frac{|a_k(\varphi,\psi)|}{\|\varphi\|_{H^{-1}_t(\R^n)}  \|\psi\|_{H^{-1}_t(\R^n)}},\\ \label{eq:samenorms2}
\|A_{t,k}^{-1}\|^{-1}_t &= \inf_{0\neq\varphi\in H_\Gamma^{-1}}  \sup_{0\neq \psi\in H_\Gamma^{-1}} \frac{|a_k(\varphi,\psi)|}{\|\varphi\|_{H^{-1}_t(\R^n)}  \|\psi\|_{H^{-1}_t(\R^n)}},
\end{align}
with the second equation holding whenever $A_k$ is invertible. Recall that the quantities on the right hand sides of  \eqref{eq:samenorms} and \eqref{eq:samenorms2} are the continuity and inf-sup constants, respectively, and $\|A_{t,k}\|_t$ and $\|A_{t,k}^{-1}\|_t$ are our abbreviations for the norms of $A_{t,k}:H_\Gamma^{-1}\to \widetilde H^{1}(\Gamma^c)^{\perp_t}$ and $A_{t,k}^{-1}: \widetilde H^{1}(\Gamma^c)^{\perp_t}\to H_\Gamma^{-1}$ when $H^{\pm 1}(\R^n)$ are equipped with the norms $\|\cdot\|_{H^{\pm 1}_t(\R^n)}$, i.e.,
\begin{equation} \label{eq:normdef}
\|A_{t,k}\|_t  := \sup_{0\neq\varphi\in H_\Gamma^{-1}}  \frac{\|A_{t,k}\varphi\|_{H^1_t(\R^n)}}{\|\varphi\|_{H^{-1}_t(\R^n)}},  \; \|A_{t,k}^{-1}\|_t  := \sup_{0\neq\varphi\in \widetilde H^1(\Gamma^c)^{\perp_t}}  \frac{\|A^{-1}_{t,k}\varphi\|_{H^{-1}_t(\R^n)}}{\|\varphi\|_{H^{1}_t(\R^n)}}.
\end{equation}

The identities \eqref{eq:samenorms} and \eqref{eq:samenorms2} are an immediate consequence of the fact that, when $P=P_t$, the isomorphism  \eqref{eq:dual2} is isometric and  $\widetilde P$ in \eqref{eq:dual2} is the identity. For example, we have, whenever $A_k$ is invertible, that
\begin{align*}
\|A_{t,k}^{-1}\|_t &= \inf_{0\neq\varphi\in H_\Gamma^{-1}}  \frac{\|A_{t,k}\varphi\|_{H^1_t(\R^n)}}{\|\varphi\|_{H^{-1}_t(\R^n)}}\\
& = \inf_{0\neq\varphi\in H_\Gamma^{-1}} \sup_{0\neq\psi\in H_\Gamma^{-1}}  \frac{|\langle A_{t,k}\varphi,\psi \rangle|}{\|\varphi\|_{H^{-1}_t(\R^n)}\|\psi\|_{H^{-1}_t(\R^n)}}\\
& = \inf_{0\neq\varphi\in H_\Gamma^{-1}} \sup_{0\neq\psi\in H_\Gamma^{-1}}  \frac{|a_k(\varphi,\psi)|}{\|\varphi\|_{H^{-1}_t(\R^n)}\|\psi\|_{H^{-1}_t(\R^n)}},
\end{align*}
where we use that \eqref{eq:dual2} is isometric and  $\widetilde P$ in \eqref{eq:dual2} is the identity to obtain the second line of the above computation.

\section{Bounding $\|A_k\|$ and $\|A_k^{-1}\|$} \label{sec:main1}

In this section we prove the main results of the paper, the bounds on $\|A_{k,k}\|_k$ and $\|A_{k,k}^{-1}\|_k$ that are stated in \S\ref{sec:main} as Propositions \ref{prop:Ak} and \ref{prop:Ak2} (the bounds on $\|A_{k,k}\|_k$) and Theorem \ref{thm:main} and Proposition \ref{prop:lower} (the bounds on $\|A_{k,k}^{-1}\|_k$). Recall that $\|A_{k,k}\|_k$ and $\|A_{k,k}^{-1}\|_k$ are our notations \eqref{eq:Aknorm} for the norms of the operator $A_{k,k}:H^{-1}_\Gamma\to \widetilde H^1(\Gamma^c)^{\perp_k}$, given by \eqref{eq:Akdef}, and its inverse when $H^{-1}_\Gamma \subset H^{-1}(\R^n)$ and $\widetilde H^1(\Gamma^c)^{\perp_k} \subset H^1(\R^n)$ are equipped with the wavenumber-explicit norms \eqref{eq:knorm}. If required, bounds on $\|A_{1,k}\|$ and $\|A_{1,k}^{-1}\|$, the norms of $A_{1,k}:H^{-1}_\Gamma \to \widetilde H^1(\Gamma_c)^\perp$, given by \eqref{eq:Akdef}, and its inverse, when  $H^{-1}(\R^n)$ and $H^1(\R^n)$ are equipped with their usual norms \eqref{eq:unorm}, can be deduced immediately from \eqref{eq:equiv2} and \eqref{eq:equiv2a}.

\subsection{Bounds on $\|A_{k,k}\|_k$} \label{sec:Akbound}

We prove first the bounds in \S\ref{sec:main} on $\|A_{k,k}\|_k$ that are Propositions \ref{prop:Ak} (the upper bound) and \ref{prop:Ak2} (the lower bound).

\begin{proof}[Proof of Proposition \ref{prop:Ak}.]
$A_{k,k} = P_k(\chi\cA_k\psi) =  P_k(\chi\cA_k\chi \psi)$, for $\psi\in H^{-1}_\Gamma$ and $\chi\in C^{\infty}_{0,\Gamma}$,  so that $\|A_{k,k}\|_k\leq$ $\|\chi \cA_k\chi\|_k:=\|\chi\cA_k\chi\|_{H^{-1}_k(\R^n)\to H^{1}_k(\R^n)}$, since $\|P_k\|=1$. Let us fix a $k_0>0$ and a choice of $\chi$, with $0\leq \chi\leq 1$. Noting that $\chi\cA_k\chi:H^s_k(\R^n)\to H^{s+2}_k(\R^n)$ and is bounded for all $s\in \R$ and $k>0$ by \eqref{eq:mapping} and \eqref{eq:equiv}, let us first bound $\chi\cA_k\chi$ as a mapping $L^2(\R^n)\to H^2_k(\R^n)$ for $k\geq k_0$. 

Suppose $f\in L^2(\R^n)$ and $R>0$ is such that $\supp(\chi)\subset B_R$, and let $\phi:=\cA_k \chi f\in H^{2,\mathrm{loc}}(\R^n)$, by \eqref{eq:mapping}. Then there exists $c>0$ such that $\|\chi\phi\|_{H_k^2(\R^n)} \leq c\|\phi\|_{H_k^2(B_R)}$ and $\|\chi f\|_{L^2(\R^n)}\leq \|f\|_{L^2(B_R)}$, for $k\geq k_0$ and $f\in L^2(\R^n)$. Further, from \eqref{eq:inv} and the discussion above Lemma \ref{lem:map}, $(\Delta+k^2)\phi = - \chi f$ and $\phi$ satisfies the Sommerfeld radiation condition. It follows from standard estimates for the free-space cut-off resolvent (e.g., see \cite[Cor.~2.2]{Spence:23} and its proof, which is claimed to hold only for $n=2,3$ but in fact works for all $n\geq 2$), that, for some $c'>0$, $\|\phi\|_{H_k^2(B_R)}\leq c'k\|\chi f\|_{L^2(B_R)}\leq c'k\| f\|_{L^2(\R^n)}$,  for $k\geq k_0$ and $f\in L^2(\R^n)$. Thus, there exists $C>0$ such that
$$
\|\chi\cA_k\chi\|_{L^2(\R^n)\to H^2_k(\R^n)} \leq Ck,  \qquad k\geq k_0. 
$$

 Now it follows easily from \eqref{eq:Newt} that
$$
\langle \chi \cA_k\chi u, v\rangle = \langle u,\overline{\chi \cA_k\chi \bar{v}}\rangle, \qquad u,v\in L^2(\R^n),
$$
in other words that $\chi\cA_k\chi$ is self-adjoint with respect to the  {\em real} inner product $\langle\cdot,\cdot\rangle^r$ on $L^2(\R^n)$, defined by  $\langle u,v\rangle^r=\int_{\R^n}uv\, \rd x$, $u,v\in L^2(\R^n)$.  This implies (cf.~the discussion in  \cite[\S2.3]{chandler2020high}) that, for $k>0$, $\|\chi\cA_k\chi\|_{L^2(\R^n)\to H^2_k(\R^n)} =$ $\|\chi\cA_k\chi\|_{H_k^{-2}(\R^n)\to L^2(\R^n)}$. Thus
\begin{equation} \label{eq:boundn}
\|\chi\cA_k\chi\|_{H_k^{s-1}(\R^n)\to H^{s+1}_k(\R^n)} \leq Ck,  \qquad k\geq k_0,
\end{equation}
for $s=\pm1$. Since $\{H_k^s(\R^n):s\in \R\}$ is an exact interpolation scale for every $k>0$ (see \S\ref{sec:fsRn}), it follows by interpolation that \eqref{eq:boundn} holds for $-1\leq s\leq 1$, in particular that
$$
\|A_{k,k}\|_k\leq \|\chi\cA_k\chi\|_k = \|\chi\cA_k\chi\|_{H^{-1}_k(\R^n)\to H^{1}_k(\R^n)} \leq Ck,  \qquad k\geq k_0.
$$
\end{proof}

The following proof adapts an argument, using a quasi-mode $u_k$ for the Helmholtz equation, which is the standard proof of the lower bound \eqref{eq:nt2}  (see, e.g., the discussion before \cite[Lem.~3.10]{CWMonk2008}).

\begin{proof}[Proof of Proposition \ref{prop:Ak2}.] If $\mathrm{int}(\Gamma)$ is non-empty then there exists a real-valued $\sigma\in C_0^\infty(\R^n)$, not identically zero, with $\supp(\sigma)\subset \mathrm{int}(\Gamma)$. For $k>0$ define $u_k\in C_0^\infty(\R^n)$ by $u_k(x) := \re^{\ri k x_1} \sigma(x)$, $x\in \R^n$. Then, where $\phi_k:=-(\Delta +k^2) u_k\in C_0^\infty(\R^n)\cap H^{-1}_\Gamma$, $\phi_k(x) = -(2\ri k \partial_{x_1}\sigma(x) +\Delta\sigma(x))\re^{\ri k x_1}$, $x\in \R^n$. Further, where $\chi\in C^{0,\infty}_\Gamma$, using \eqref{eq:inv},
$$
A_{k,k}\phi_k = P_k(\chi\cA_k\phi_k) = P_k(\chi u_k) = P_k(u_k)=u_k,
$$
where the last equality holds since $u_k$ is supported in $\Gamma$, so that $u_k\in (\widetilde H^1(\Gamma^c))^{\perp_k}$. Thus
$$
\|A_{k,k}\|_k \geq \|u_k\|_{H^1_k(\R^n)}/\|\phi_k\|_{H^{-1}_k(\R^n)}, \qquad k>0.
$$
Now $\|u_k\|_{H^1_k(\R^n)}\geq k\|u_k\|_{L^2(\R^n)} = k\|\sigma\|_{L^2(\R^n)}$. Further, for $v\in H^1_k(\R^n)$,
\begin{eqnarray*}
|\langle \phi_k,v\rangle| &=& |(\phi_k,v)_{L^2(\R^n)}| \leq \|2\ri k \partial_{x_1}\sigma + \Delta \sigma\|_{L^2(\R^n)}\|v\|_{L^2(\R^n)}\\
& \leq & (\|\partial_{x_1}\sigma\|_{L^2(\R^n)} + k^{-1}\|\Delta \sigma\|_{L^2(\R^n)})\|v\|_{H^1_k(\R^n)},
\end{eqnarray*}
so that $\|\phi\|_{H^{-1}(\R^n)} \leq \|\partial_{x_1}\sigma\|_{L^2(\R^n)} + k^{-1}\|\Delta \sigma\|_{L^2(\R^n)}$. Thus
$$
\|A_{k,k}\|_k \geq \frac{k\|\sigma\|_{L^2(\R^n)}}{\|\partial_{x_1}\sigma\|_{L^2(\R^n)} + k^{-1}\|\Delta \sigma\|_{L^2(\R^n)}}, \qquad k>0,
$$
from which the result follows.
\end{proof}

\subsection{Bounds on $\|A_{k,k}^{-1}\|_k$} \label{sec:Ainv}

In this section we prove our main bound, Theorem \ref{thm:main}, on $\|A_{k,k}^{-1}\|_k$. Our proof is, in part, inspired by arguments made to bound the Dirichlet to Neumann (DtN) operator for $\Omega$ when $\Omega$ is Lipschitz in Spence \cite[\S3]{spence2014wavenumber} (and see \cite[Lemma 4.2]{chandler2020high}). %These lead to a bound on the DtN map for $\Omega$ in terms of the cut-off resolvent norm $C_{k,R}(\Omega)$, given by \eqref{eq:CkR2}. In a similar way as in  \cite[\S3]{spence2014wavenumber}, studying our operator $A_k$ in the case that the wavenumber $k$ is complex will play an important role. 
In particular, the idea of using properties of solution operators when the wavenumber is complex as a tool to reduce operator bounds to estimating cut-off resolvent norms is taken from  \cite[\S3]{spence2014wavenumber}.
However, the application here is substantially different, and key elements of the argument in \cite[\S3]{spence2014wavenumber} that use Rellich-identity methods are not applicable in our setting where $\Omega$ is not necessarily Lipschitz.

As the above suggests, we will make use of the Newtonian potential \eqref{eq:Newt} and the integral equation \eqref{eq:iemain} in the case that $k$ is complex, precisely in the case that 
$\mathrm{Im}\, k>0$. The theory is significantly simpler in that case since $k^2$ is not in the spectrum, $[0,\infty)$, of  $-\Delta:L^2(\R^n)\to L^2(\R^n)$, with domain $\{v\in H^1(\R^n):\Delta v\in L^2(\R^n)\}$, the self-adjoint free-space Laplacian. Indeed, $-\Delta - k^2$ is an isomorphism from $H^{s+2}(\R^n)\to H^s(\R^n)$, for all $s\in \R$, and $\cA_k = (-\Delta-k^2)^{-1}:H^s(\R^n)\to H^{s+2}(\R^n)$ is its inverse, with a convolution kernel \eqref{eq:Phidef} that is exponentially decreasing at infinity (see, e.g,  \cite[p.~282]{mclean2000strongly}). Proposition \ref{prop:ie1} holds for $\mathrm{Im}\, k>0$ with some simplifications: for $\phi\in H^{-1}_\Gamma$ (indeed, for any $\phi\in H^{-1}(\R^n)$), $u=\cA_k\phi\in H^1(\R^n)$ so that the $\chi$ in \eqref{eq:Akdef2} is superfluous and $A_k = P\cA_k$. Corollary \ref{cor:exists} also applies in this setting. So does Proposition \ref{prop:coer}, with the simplification that $a_k(\cdot,\cdot)$ is coercive in the case that $\mathrm{Im}\, k>0$, so that (cf.~Theorem \ref{thm:invert}) $A_k$ is invertible by Lax-Milgram for $\mathrm{Im}\, k>0$. The coercivity of $a_k(\cdot,\cdot)$, i.e.~of $A_k=P \cA_k$, follows from the coercivity of $\cA_k:H^{-1}_k(\R^n)\to H^1(\R^n)=((H^{-1}(\R^n))^*$  (cf.~the deduction that $A_k^0$ is coercive because $\cA_k^0$ is coercive, in the proof of Proposition \ref{prop:coer}). 

To prove Theorem \ref{thm:main} we need to show that, given any $k_0>0$ and $R>R_\Gamma$, there exists $C>0$ such that, if $g\in \widetilde H^1(\Gamma^c)^\perp$ and $k\in [k_0,\infty)\setminus \Sigma(\Omega_-)$ (so $A_k$ is invertible by Theorem \ref{thm:invert}), and if $\phi:=A_{k,k}^{-1}g\in H^{-1}_\Gamma$, then
\begin{equation} \label{eq:need}
\|\phi\|_{H_k^{-1}(\R^n)} \leq Ck^2\left(C_{k,R}(\Omega) + C_k(\Omega_-)\right)\|g\|_{H^1_k(\R^n)}.
\end{equation}
Our argument to show that the above bound holds, has the following steps, captured in Lemmas \ref{lem:step1}-\ref{lem:step3} below. These lemmas, making the choice $\beta(k)=k^p$, for some $p\in [1,2]$, in steps \ref{step2} and \ref{step3}, provide the proof of Theorem \ref{thm:main}, also given below.
\begin{enumerate}
\item \label{step1} Bound $\phi\in H^{-1}_\Gamma$ in terms of $u=\cA_k \phi$, for $k\geq k_0$.
\item \label{step2} Bound $w=\cA_\lambda \mu$ in terms of $g\in \widetilde H^1(\Gamma^c)^\perp$, where $\mu \in H^{-1}_\Gamma$ is the solution of $A_\lambda \mu = g$, in the case that $\mathrm{Im}\, \lambda>0$ and $\lambda^2 = k^2 + \ri \beta(k)$, with $0<\beta(k)\leq k^2$.
\item \label{step3} For $k\in [k_0,\infty)\setminus \Sigma(\Omega_-)$ and $R>R_\Gamma$, choose $\psi \in C_{0,\Gamma}^\infty$ with $\supp(\psi)\subset B_R$, and define  $u$ and $w$ as in steps \ref{step1} and \ref{step2}, with $\phi$ the unique solution of $A_{k,k}\phi = g$. Then it is easy to see that $v:= u-\psi w\in \widetilde H^1(\Gamma^c)$, $v$ satisfies the radiation condition \eqref{eq:src}, and $\Delta v + k^2 v = -h\in L^2(\R^n)$, with $\supp(h)\subset B_R$ and $h$ given explicitly in terms of $w$, which leads to a bound for $v$ in terms of $w$, $C_{k,R}(\Omega)$, and $C_k(\Omega_-)$.
\end{enumerate}

\begin{lemma} \label{lem:step1} Given any $k_0>0$ and $R>R_\Gamma$ there exists $C_1>0$ such that
$$
\|\phi\|_{H_k^{-1}(\R^n)} \leq C_1 \|u\|_{H^1_k(B_R)},
$$
if $\phi\in H^{-1}_\Gamma$, $k\geq k_0$, and $u:= \cA_\lambda \phi$, where $\lambda\in \C$ with $\mathrm{Im}\,\lambda \geq 0$ and $\lambda^2=k^2+\ri \beta(k)$, with $0\leq \beta(k)\leq k^2$. 
\end{lemma}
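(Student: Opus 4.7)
The plan is to exploit the distributional identity $\phi = -(\Delta+\lambda^2)u$ from \eqref{eq:inv}, together with the crucial fact that $\supp(\phi)\subset \Gamma\subset B_{R_\Gamma}\subset B_R$. This support condition lets me localize any test function $v\in H^1(\R^n)$ to a compactly supported function $\chi v$, at which point the pairing $\langle \phi, v\rangle$ reduces to a local Dirichlet-form-type expression on $B_R$ where $u|_{B_R}\in H^1(B_R)$ (regardless of whether $\mathrm{Im}\,\lambda>0$ or $\lambda=k$ is real, by \eqref{eq:mapping}). The whole bound will then drop out of Cauchy--Schwarz once I use $|\lambda^2|=\sqrt{k^4+\beta(k)^2}\leq \sqrt{2}\,k^2$.

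Concretely, since $R>R_\Gamma$ I would fix a cutoff $\chi\in C_0^\infty(\R^n)$ with $\chi\equiv 1$ on an open neighbourhood of $\Gamma$ and $\supp(\chi)\subset B_R$. For any $v\in H^1(\R^n)$, the function $(1-\chi)v$ has support in a closed subset of $\Gamma^c$ and hence belongs to $\widetilde H^1(\Gamma^c)$; since $\phi\in H^{-1}_\Gamma$ annihilates $\widetilde H^1(\Gamma^c)$ under the duality pairing \eqref{eq:dual} (as $\langle\phi,\psi\rangle=0$ for $\psi\in C_0^\infty(\Gamma^c)$ by the support condition on $\phi$, then by density), I get
\begin{equation*}
\langle \phi,v\rangle = \langle \phi,\chi v\rangle.
\end{equation*}
Now $\chi v$ is compactly supported in $B_R$ and $u|_{B_R}\in H^1(B_R)$, so integration by parts in \eqref{eq:inv} applied to $\chi v$ is legitimate and yields
\begin{equation*}
\langle \phi,v\rangle \;=\; \int_{B_R}\nabla u\cdot\overline{\nabla(\chi v)}\,dx \;-\; \lambda^{2}\int_{B_R} u\,\overline{\chi v}\,dx.
\end{equation*}

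Then I would bound the two terms by Cauchy--Schwarz. For the first, $\|\nabla u\|_{L^2(B_R)}\leq \|u\|_{H^1_k(B_R)}$, and the product rule gives $\|\nabla(\chi v)\|_{L^2(\R^n)}\leq C_\chi(\|\nabla v\|_{L^2(B_R)}+\|v\|_{L^2(B_R)})$, which for $k\geq k_0$ is bounded by $C_\chi(1+k_0^{-1})\|v\|_{H^1_k(\R^n)}$. For the second, $|\lambda^2|\leq \sqrt{2}\,k^2$ together with $k\|u\|_{L^2(B_R)}\leq \|u\|_{H^1_k(B_R)}$ and $k\|\chi v\|_{L^2}\leq \|\chi\|_\infty\|v\|_{H^1_k(\R^n)}$ gives a bound of the form $\sqrt{2}\,\|\chi\|_\infty\|u\|_{H^1_k(B_R)}\|v\|_{H^1_k(\R^n)}$. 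Combining,
\begin{equation*}
|\langle \phi,v\rangle|\leq C_1\|u\|_{H^1_k(B_R)}\|v\|_{H^1_k(\R^n)},
\end{equation*}
with $C_1$ depending only on $k_0$, $R$, and $\chi$. Taking the supremum over nonzero $v\in H^1(\R^n)$ gives $\|\phi\|_{H^{-1}_k(\R^n)}\leq C_1\|u\|_{H^1_k(B_R)}$ as required.

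There is no serious obstacle: the only thing that needs care is the justification of the localization step (that $\langle\phi,v\rangle=\langle\phi,\chi v\rangle$), which is essentially a restatement of the support hypothesis for $\phi\in H^{-1}_\Gamma$ via the annihilator description of $\widetilde H^1(\Gamma^c)$. Everything else is a direct Cauchy--Schwarz computation using the explicit bound on $|\lambda^2|$ afforded by $\beta(k)\leq k^2$.
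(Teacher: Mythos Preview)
Your proof is correct and follows essentially the same strategy as the paper: both use a cutoff $\chi\in C^\infty_{0,\Gamma}$ supported in $B_R$, the identity $\phi=-(\Delta+\lambda^2)u$, the bound $|\lambda^2|\leq \sqrt{2}\,k^2$, and elementary $H^1_k$/$H^{-1}_k$ estimates. The only cosmetic difference is that the paper multiplies $u$ by the cutoff (setting $\tilde u=\psi u$, writing $\phi$ explicitly in terms of $\tilde u$, and bounding each term in $H^{-1}_k$ via Fourier-side inequalities such as $\|\Delta\tilde u\|_{H^{-1}_k}\leq\|\nabla\tilde u\|_{L^2}$), whereas you multiply the test function $v$ by the cutoff and integrate by parts---these are dual presentations of the same computation.
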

\begin{proof} Suppose that $\phi\in H^{-1}_\Gamma$ and $k\geq k_0$, set $u:= \cA_\lambda \phi$, with $\lambda$ as in the statement of the lemma, choose $\psi\in C_{0,\Gamma}^\infty$ with $\supp(\psi)\subset B_R$, and set $\tilde u := \psi u$ and 
\begin{equation} \label{eq:psi}
C_\psi := \max\{\|\psi\|_{L^\infty(\R^n)},\| |\nabla\psi| \|_{L^\infty(\R^n)},\|\Delta \psi\|_{L^\infty(\R^n)}\}.
\end{equation}
Then, since $\Delta u + \lambda^2 u = -\phi$ (by \eqref{eq:inv} in the case $\beta(k)=0$) it follows that $\Delta \tilde u + \lambda^2 \tilde u = -\psi\phi + 2\nabla u \cdot \nabla \psi + u\Delta \psi$. Since $\psi\phi=\phi$, $|\lambda^2| \leq 2k^2$, $\|v\|_{H_k^{-1}(\R^n)}\leq k^{-1}\|v\|_{L^2(\R^n)}$, for $v\in L^2(\R^n)$, and 
$$
\|\Delta v\|^2_{H_k^{-1}(\R^n)}\leq \int_{\R^n} |\hat v(\xi)|^2|\xi|^2\, \rd \xi = \| |\nabla v| \|^2_{L^2(\R^n)}, \qquad v\in H^1(\R^n), 
$$
it follows that
\begin{align*}
\|\phi\|_{H_k^{-1}(\R^n)} &= \|2\nabla u\cdot \nabla \psi + u\Delta \psi -\lambda^2\psi u-\Delta \tilde u\|_{H^{-1}_k(\R^n)}\\
& \leq  2k^{-1} C_\psi \| |\nabla u| \|_{L^2(B_R)} + (k^{-1} +2k)C_\psi \|u\|_{L^2(B_R)} + \| |\nabla \tilde u| \|_{L^2(\R^n)}\\
& \leq  (1+2k^{-1}) C_\psi \| |\nabla u| \|_{L^2(B_R)} + (k^{-1} +1 + 2k)C_\psi \|u\|_{L^2(B_R)},
\end{align*}
from which the claimed bound follows.
\end{proof}

In the following lemma and subsequently we define $A_{k,\lambda}$, for $k>0$ and $\mathrm{Im}\, \lambda>0$, by \eqref{eq:Akdef}, i.e.\ by $A_{k,\lambda}\phi := P_k(\chi \cA_\lambda)\phi = P_k\cA_\lambda \phi$, $\phi\in H_\Gamma^{-1}$.
\begin{lemma} \label{lem:step2} Suppose that  $k_0>0$. Then  there exists $C_2>0$ such that, for all  $k\geq k_0$, $g\in \widetilde H^1(\Gamma^c)^{\perp_k}$, and 
$\lambda\in \C$ with $\lambda_i := \mathrm{Im}\, \lambda >0$ and $\lambda^2=k^2+\ri \beta(k)$, with $0<\beta(k)\leq k^2$,  it holds that
$$
\|w\|_{H^1_k(\R^n)} \leq \frac{C_2k^2}{\beta(k)} \|g\|_{H^1_k(\R^n)},
$$ 
where $w:= \cA_\lambda \mu$ and $\mu:= A_{k,\lambda}^{-1}g\in H^{-1}_\Gamma$.
\end{lemma}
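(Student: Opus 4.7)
The principle driving my plan is that the shifted wavenumber $\lambda$ lies in the upper half-plane, so that $\cA_\lambda:H^{-1}(\R^n)\to H^1(\R^n)$ is the bounded inverse of $-(\Delta+\lambda^2)$ (see the discussion at the start of \S\ref{sec:Ainv}). Thus $w := \cA_\lambda\mu\in H^1(\R^n)$ satisfies $-(\Delta+\lambda^2)w = \mu$ in $H^{-1}(\R^n)$. The first step will be to show that $P_k w = g$. Since $\chi\in C^\infty_{0,\Gamma}$, the function $(1-\chi)w$ is an element of $H^1(\R^n)$ that vanishes on an open neighbourhood of the compact set $\Gamma$, so a standard mollification and cut-off at infinity argument shows that $(1-\chi)w\in\widetilde H^1(\Gamma^c)=\ker P_k$; from \eqref{eq:Akdef} this gives
\begin{equation*}
g = A_{k,\lambda}\mu = P_k(\chi w) = P_k w - P_k((1-\chi)w) = P_k w.
\end{equation*}
Hence $v := w-g$ lies in $\widetilde H^1(\Gamma^c)$, and the Pythagorean identity for the $H^1_k$-orthogonal projection $P_k$ yields $\|w\|^2_{H^1_k(\R^n)} = \|g\|^2_{H^1_k(\R^n)} + \|v\|^2_{H^1_k(\R^n)}$. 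It therefore suffices to estimate $v$.

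To do so, I will test the equation $-(\Delta+\lambda^2)w = \mu$ against $\varphi \in \widetilde H^1(\Gamma^c)$. As $\mu\in H^{-1}_\Gamma$ annihilates $\widetilde H^1(\Gamma^c)$, the weak form reads
\begin{equation*}
b_\lambda(w,\varphi) := \int_{\R^n}\bigl(\nabla w\cdot\nabla\overline{\varphi} - \lambda^2 w\,\overline{\varphi}\bigr)\,\rd x = 0, \qquad \forall\,\varphi\in\widetilde H^1(\Gamma^c).
\end{equation*}
Inserting $w = g+v$ and choosing $\varphi = v$ yields the identity $b_\lambda(v,v) = -b_\lambda(g,v)$. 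With $\lambda^2 = k^2+\ri\beta(k)$, taking imaginary parts gives $\beta(k)\|v\|^2_{L^2(\R^n)} \leq |b_\lambda(g,v)|$, while taking real parts gives $\| |\nabla v|\|^2_{L^2(\R^n)} \leq k^2\|v\|^2_{L^2(\R^n)} + |b_\lambda(g,v)|$. A direct Cauchy--Schwarz estimate, together with $|\lambda^2|\leq 2k^2$, bounds $|b_\lambda(g,v)| \leq C\|g\|_{H^1_k(\R^n)}\|v\|_{H^1_k(\R^n)}$ for an absolute constant $C$.

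Adding these two inequalities produces $\|v\|^2_{H^1_k(\R^n)}\leq 2k^2\|v\|^2_{L^2(\R^n)} + C\|g\|_{H^1_k(\R^n)}\|v\|_{H^1_k(\R^n)}$, and inserting the $L^2$-bound $\|v\|^2_{L^2(\R^n)}\leq C\beta(k)^{-1}\|g\|_{H^1_k(\R^n)}\|v\|_{H^1_k(\R^n)}$ together with $\beta(k)\leq k^2$ yields $\|v\|_{H^1_k(\R^n)}\leq C'k^2\beta(k)^{-1}\|g\|_{H^1_k(\R^n)}$. Since $k^2/\beta(k)\geq 1$, the Pythagorean identity then delivers the claimed bound $\|w\|_{H^1_k(\R^n)}\leq C_2 k^2\beta(k)^{-1}\|g\|_{H^1_k(\R^n)}$. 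The only delicate point is the identification $P_k w = g$, in particular the verification that $(1-\chi)w\in\widetilde H^1(\Gamma^c)$; the remainder is a pure coercivity computation in which the factor $k^2/\beta(k)$ emerges inevitably, because $\mathrm{Im}\,\lambda^2 = \beta(k)$ controls only $\|v\|_{L^2(\R^n)}$, forcing one to trade $k^2\|v\|^2_{L^2(\R^n)}$ against the full $H^1_k$-norm.
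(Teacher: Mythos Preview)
Your proof is correct and takes a genuinely different route from the paper's. The paper tests $w$ against itself: since $\langle\mu,\varphi\rangle=0$ for $\varphi\in\widetilde H^1(\Gamma^c)$, one has $b_\lambda(w,w)=\langle\mu,w\rangle=\langle\mu,g\rangle$, and then Lemma~\ref{lem:step1} is invoked to bound $\|\mu\|_{H^{-1}_k(\R^n)}\leq C_1\|w\|_{H^1_k(\R^n)}$; coercivity is obtained in one stroke via the $\bar\lambda$-multiplication trick, $\mathrm{Im}(\bar\lambda\, b_\lambda(w,w))\geq \lambda_i\|w\|_{H^1_k(\R^n)}^2$, and the factor $k^2/\beta(k)$ emerges from $|\lambda|/\lambda_i$. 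You instead exploit the orthogonal splitting $w=g+v$ with $v\in\widetilde H^1(\Gamma^c)$, test against $v$, and extract the bound by separating real and imaginary parts of $b_\lambda(v,v)=-b_\lambda(g,v)$. Your argument is self-contained, avoiding any appeal to Lemma~\ref{lem:step1}, and the identification $P_kw=g$ (via $(1-\chi)w\in\widetilde H^1(\Gamma^c)$) is both correct and a nice structural observation. The paper's version is a little shorter because Lemma~\ref{lem:step1} is already available and the $\bar\lambda$ trick compresses the real/imaginary-part bookkeeping; yours is arguably more elementary and makes the role of the projection $P_k$ more transparent.
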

\begin{proof} Suppose that $k\geq k_0$ and $\lambda$, $g$, $\mu$, and $w$ are as in the statement of the lemma. Let us define $b_\lambda: H_k^1(\mathbb{R}^n)\times H_k^1(\mathbb{R}^n)\rightarrow\mathbb{C}$ by
\begin{equation*}
 b_\lambda(u,v):=\int_{\R^n}\left(\lambda^2 u \bar v - \nabla u\cdot \nabla \bar v\right)\, \rd x, \qquad u,v \in H^1(\R^n).
\end{equation*}
This is the sesquilinear form that appears in the weak form of the Helmholtz equation on $\R^n$: 
if $u\in H^1(\R^n)$ and $f\in H^{-1}(\R^n)$, then $\Delta u + \lambda^2 u = -f$ in a distributional sense if and only if
$$
b_\lambda(u,v) = -\langle f,v\rangle, \qquad \forall v\in H^1(\R^n).
$$
In particular, since $(\Delta + \lambda^2)w = - \mu$ and $\langle \mu, v\rangle =0$ for all $v\in \widetilde H^1(\Gamma^c)$, we have that
\begin{align*}
|b_\lambda(w,w)| & = |\langle \mu,\cA_\lambda \mu\rangle| = |\langle \mu, A_{k,\lambda} \mu\rangle|=|\langle \mu,g\rangle|\\ & \leq \|\mu\|_{H_k^{-1}(\R^n)} \|g\|_{H^1_k(\R^n)} \leq C_1\|w\|_{H_k^{1}(\R^n)} \|g\|_{H^1_k(\R^n)},
\end{align*}
by Lemma \ref{lem:step1}. Further, for $u\in H^1(\R^n)$,
$$
\bar \lambda b_\lambda(u,u) = \int_{\R^n}\left(\lambda |\lambda|^2 |u|^2  - \bar\lambda |\nabla u|^2\right)\, \rd x
$$
so that
$$
|\bar \lambda b_\lambda(u,u)| \geq \mathrm{Im}( \bar \lambda b_\lambda(u,u)) = \lambda_i \int_{\R^n}\left(|\lambda|^2 |u|^2  + |\nabla u|^2\right)\, \rd x \geq \lambda_i \|u\|_{H^1_k(\R^n)}^2,
$$
i.e., $b_\lambda(\cdot,\cdot)$ is coercive, and
$$
\lambda_i \|w\|_{H_k^{1}(\R^n)} \leq C_1|\lambda| \|g\|_{H^1_k(\R^n)}.
$$
Further, $\beta(k)\leq k^2$ so that $|\lambda| \leq \sqrt{2}\,k$ and
$$
\lambda_i = \frac{\beta(k)}{\sqrt{2}\sqrt{k^2+\sqrt{k^4+\beta^2(k)}}}\geq \frac{\beta(k)}{\sqrt{2}\sqrt{1+\sqrt{2}}\,k},
$$
from which the claimed result follows.
\end{proof}

\begin{lemma} \label{lem:step3} Suppose that $k_0>0$ and $R>R_\Gamma$ and choose $\psi\in C_{0,\Gamma}^\infty$ with $\supp(\psi)\subset B_R$. Then there exists $C_3>0$ such that, if  $k\in [k_0,\infty)\setminus \Sigma(\Omega_-)$, $g\in \widetilde H^1(\Gamma^c)^{\perp_k}$, $\lambda\in \C$, $\lambda_i := \mathrm{Im}\, \lambda >0$, $\lambda^2=k^2+\ri \beta(k)$, with $0<\beta(k)\leq k^2$, $\phi:=A_{k,k}^{-1}g$, $\mu:= A_{k,\lambda}^{-1}g$, $u:= \cA_k\phi$, $w:= \cA_\lambda \mu$, and $v:= u-\psi w$, then 
\begin{equation} \label{eq:lem3}
\|v\|_{H^1_k(B_R)} \leq C_3 (k+\beta(k))\left(C_{k,R}(\Omega)+C_k(\Omega_-)\right) \|w\|_{H^1_k(B_R)}.
\end{equation}
\end{lemma}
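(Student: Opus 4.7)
The strategy is to realise $v=u-\psi w$ as a radiating Helmholtz solution on $\Gamma^c$ with zero Dirichlet boundary condition on $\Gamma$ from both sides and a compactly supported $L^2$ forcing $h$ controlled by $\|w\|_{H^1_k(B_R)}$. The cut-off resolvent $\chi_R R(k;\Omega)\chi_R$ on $\Omega$ and the Dirichlet resolvent $R(k;\Omega_-)$ on $\Omega_-$ then give $L^2$ bounds on the two components of $v$, and a Caccioppoli-type energy estimate upgrades these to the required $H^1_k$ bound on $B_R$.

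First --- and this is the main technical point --- I would verify that $\widetilde\chi v\in \widetilde H^1(\Gamma^c)$ for every $\widetilde\chi\in C_0^\infty(\R^n)$, so that $v|_\Omega\in \Hol_0(\Omega)$ and $v|_{\Omega_-}\in H_0^1(\Omega_-)$ (and the integration by parts in the last step picks up no contribution from $\Gamma$). Since $A_{k,k}$ is independent of the choice of $\chi\in C^\infty_{0,\Gamma}$, I may fix $\chi$ with $\chi\equiv 1$ on $\supp\psi$. Then $A_{k,k}\phi=g$ means $P_k(\chi u)=g$, i.e.\ $\chi u-g\in\widetilde H^1(\Gamma^c)$, while $A_{k,\lambda}\mu=g$ combined with $w=\cA_\lambda\mu\in H^1(\R^n)$ (using $\mathrm{Im}\,\lambda>0$) gives $P_k w=g$, so $w-g\in\widetilde H^1(\Gamma^c)$. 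Subtracting yields $\chi u-w\in \widetilde H^1(\Gamma^c)$. Since $1-\psi$ vanishes on a neighbourhood of $\Gamma$, approximating $w$ by $C_0^\infty(\R^n)$ functions and multiplying by $1-\psi$ shows $(1-\psi)w\in \widetilde H^1(\Gamma^c)$, so $\chi v=\chi u-\psi w=(\chi u-w)+(1-\psi)w\in\widetilde H^1(\Gamma^c)$; the conclusion for arbitrary $\widetilde\chi$ follows by writing $\widetilde\chi v = \widetilde\chi\chi v + \widetilde\chi(1-\chi)v$ and noting that the second summand is a compactly supported $H^1$ function with support in $\Gamma^c$.

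Next I would derive the PDE and the $L^2$ bound. Using $(\Delta+k^2)u=-\phi=0$ and $(\Delta+\lambda^2)w=-\mu=0$ on $\Gamma^c$ together with the Leibniz rule and $\lambda^2-k^2=\ri\beta(k)$,
$$(\Delta+k^2)v = -h \quad\text{on }\Gamma^c,\qquad h:=2\nabla\psi\cdot\nabla w + w\,\Delta\psi-\ri\beta(k)\,\psi w,$$
with $\supp h\subset B_R$ and, by $\|w\|_{L^2(B_R)}\le k^{-1}\|w\|_{H^1_k(B_R)}$ and $\|\nabla w\|_{L^2(B_R)}\le \|w\|_{H^1_k(B_R)}$, $\|h\|_{L^2(\R^n)}\le C_\psi(1+\beta(k)/k)\|w\|_{H^1_k(B_R)}$. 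Since $u$ satisfies \eqref{eq:src} and $\psi w$ has compact support, so does $v$. Thus $v|_\Omega$ is a radiating solution of $(\Delta+k^2)v=-h|_\Omega$ with forcing in $\Omega_{R+1}$, and \eqref{eq:CkR} together with the standard uniform (in $k\ge k_0$) equivalence $C_{k,R+1}(\Omega)\asymp C_{k,R}(\Omega)$ gives $\|v\|_{L^2(\Omega_{R+1})}\le C\,C_{k,R}(\Omega)\|h\|_{L^2}$. On the other hand, $\Omega_-\subset \overline{B_{R_\Gamma}}\subset B_R$, $v|_{\Omega_-}\in H_0^1(\Omega_-)$, and $k\notin\Sigma(\Omega_-)$ give $v|_{\Omega_-}=R(k;\Omega_-)h|_{\Omega_-}$, hence $\|v\|_{L^2(\Omega_-)}\le C_k(\Omega_-)\|h\|_{L^2}$.

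Finally, take $\eta\in C_0^\infty(\R^n)$ with $\eta\equiv 1$ on $B_R$ and $\supp\eta\subset B_{R+1}$. By the first step, $\eta^2\bar v\in\widetilde H^1(\Gamma^c)$ is a legitimate test function for $(\Delta+k^2)v=-h$ on $\Gamma^c$; integration by parts --- with no contribution from $\Gamma$ (by Step~1) or from $\partial B_{R+1}$ (since $\eta$ vanishes there) --- followed by Young's inequality gives
$$\tfrac12\int\eta^2|\nabla v|^2 \le k^2\|v\|_{L^2(B_{R+1})}^2 + \|h\|_{L^2}\|v\|_{L^2(B_{R+1})} + 2\|\nabla\eta\|_\infty^2\|v\|_{L^2(B_{R+1})}^2.$$
Setting $D:=C_{k,R}(\Omega)+C_k(\Omega_-)$, inserting $\|v\|_{L^2(B_{R+1})}\le C D\|h\|_{L^2}$, and using the lower bound $C_{k,R}(\Omega)\ge c/k$ from \eqref{eq:nt2} (so that $D\le C'k^2 D^2$) to absorb the lower-order terms into $k^2 D^2\|h\|_{L^2}^2$ yields $\|v\|_{H^1_k(B_R)}\le C k D\|h\|_{L^2}$. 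Combined with the bound on $\|h\|_{L^2}$ this gives \eqref{eq:lem3}. The delicate point is Step~1; without the precise identification of the boundary behaviour of $v$ at $\Gamma$, the integration by parts would pick up jump terms that would obstruct the decoupling of the problem into the exterior and interior resolvent estimates.
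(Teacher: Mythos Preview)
Your proof is correct and follows the same overall strategy as the paper: show that $v$ lies in $\widetilde H^1(\Gamma^c)$ after localisation (so that the exterior and interior Dirichlet resolvents apply), compute the forcing $h$, and bound $v$ via $C_{k,R}(\Omega)$ and $C_k(\Omega_-)$. Your Step~1 is the same argument as the paper's, only more explicit; the paper compresses it to the single observation $P_k(\chi v)=A_{k,k}\phi-A_{k,\lambda}\mu=0$ (implicitly using $P_k(\chi\psi w)=P_k(\chi w)$ since $(\psi-1)w\in\widetilde H^1(\Gamma^c)$).

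The only genuine difference is in the final upgrade from $L^2$ to $H^1_k$. The paper invokes the $L^2\!\to\!H^1_k$ cut-off resolvent norm $C'_{k,R}(\Omega)$ directly on $\Omega_R$ and cites the bound $C'_{k,R}(\Omega)\le\sqrt{2k^2C_{k,R}(\Omega)^2+C_{k,R}(\Omega)}$ from a companion paper (and the analogous bound on $\Omega_-$), thereby staying on $B_R$ throughout. You instead obtain $L^2$ control on the enlarged ball $B_{R+1}$ and run a Caccioppoli estimate by hand, which is more self-contained but imports the (standard, but not entirely trivial) fact that $C_{k,R+1}(\Omega)\asymp C_{k,R}(\Omega)$ uniformly for $k\ge k_0$. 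Both routes yield the same final estimate; the paper's is cleaner bookkeeping, yours avoids an external citation at the cost of the enlarged ball.
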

\begin{proof}
We note that $u\in \Hol(\R^n)$ and $w\in H^1(\R^n)$, so that $v\in \Hol(\R^n)$ and $v|_{\Omega_-}\in H^1(\Omega_-)$, and that $v$ satisfies the radiation condition \eqref{eq:src} since $u$ does. Further, for every $\chi\in C_{0,\Gamma}^\infty$, $P_k(\chi v) = A_{k,k}\phi-A_{k,\lambda}\mu =0$, so that $\chi v\in \widetilde H^1(\Gamma^c)$. Thus $(\chi v)|_\Omega \in H_{0}^1(\Omega)$, for all $\chi\in C_{0,\Gamma}^\infty$, and hence for all $\chi\in C_0^\infty(\R^n)$, so that $v|_\Omega \in \Hol_0(\Omega)$. Moreover $(\Delta+k^2)v = -h$ in $\Gamma^c$, where
$$
h := w\Delta\psi+2\nabla\psi\cdot\nabla w-\ri \beta(k)\psi w \in L^2_{\mathrm{comp}}(\R^n),
$$
with $\supp(h)\subset B_R$.  Where  $C'_{k,R}(\Omega):=$ $\|\chi_R R(k;\Omega)\chi_R\|_{L^2(\Omega_R)\to H^1_k(\Omega_R)}$, it follows that $\|v\|_{H^1_k(\Omega_R)} \leq C'_{k,R}(\Omega)\|h\|_{L^2(\Omega_R)}$ and, if $\Omega_-$ is non-empty, that $\|v\|_{H^1_k(\Omega_-)} \leq C'_k(\Omega_-)\|h\|_{L^2(\Omega_-)}$, where
 $C'_k(\Omega_-) := \|R(k;\Omega_-)\|_{L^2(\Omega_-)\to H^1_k(\Omega_-)}$. Defining $C'_k(\Omega_-) := 0$ in the case that $\Omega_-=\emptyset$, it follows that 
$$
\|v\|_{H^1_k(B_R)} \leq \left(C'_k(\Omega_-)+C'_{k,R}(\Omega)\right) \|h\|_{L^2(B_R)}.
$$
Further, by \cite[Equation (10)]{SiavashSimon0}, where $C_{k,R}(\Omega)$ is defined by \eqref{eq:CkR2},
$$
C'_{k,R}(\Omega) \leq \sqrt{2k^2 (C_{k,R}(\Omega))^2 + C_{k,R}(\Omega)}.
$$
Arguing similarly, recalling \eqref{eq:resol},
$$
C'_{k}(\Omega_-) \leq \sqrt{2k^2 (C_{k}(\Omega_-))^2 + C_{k}(\Omega_-)}.
$$
Moreover, where $C_\psi$ is given by \eqref{eq:psi},
\begin{align*}
\|h\|_{L^2(B_R)} &\leq C_\psi (1+\beta(k)) \|w\|_{L^2(B_R)} + 2C_\psi \| |\nabla w|\|_{L^2(B_R)}\\ & \leq C_\psi (k^{-1}(1+\beta(k))+2) \|w\|_{H^1_k(B_R)}.
\end{align*}
The claimed result follows by combining these inequalities.
\end{proof}

\begin{proof}[Proof of Theorem \ref{thm:main}]
As noted above we need to show that, given any $k_0>0$ and $R>R_\Gamma$, there exists $C>0$ such that, if  $k\in [k_0,\infty)\setminus \Sigma(\Omega_-)$ and $g\in \widetilde H^1(\Gamma^c)^{\perp_k}$, and if $\phi:=A_{k,k}^{-1}g$, then \eqref{eq:need} holds. So suppose $k_0>0$, $R>R_\Gamma$,  $k\in [k_0,\infty)\setminus \Sigma(\Omega_-)$, $g\in \widetilde H^1(\Gamma^c)^{\perp_k}$, and set $\phi:=A_{k,k}^{-1}g\in H^{-1}_\Gamma$. By Lemma \ref{lem:step1}, applied with $\beta(k)=0$, $\|\phi\|_{H_k^{-1}(\R^n)}\leq C_1 \|u\|_{H_k^1(B_R)}$, where $u:= \cA_k \phi$. Define $\psi$, $\mu$, $w$, and $v$ as in Lemma \ref{lem:step3}. Then \eqref{eq:lem3} holds by Lemma \ref{lem:step3}, so that, where $C_\psi$ is given by \eqref{eq:psi},
\begin{align*}
\|u\|_{H^1_k(B_R)} &\leq \|v\|_{H^1_k(B_R)} + \|\psi w\|_{H^1_k(B_R)} \leq  \|v\|_{H^1_k(B_R)} + 2C_\psi \|w\|_{H^1_k(B_R)}\\
&\leq \left(C_3 (k+\beta(k))\left(C_{k,R}(\Omega)+C_k(\Omega_-)\right) + 2C_\psi\right)\|w\|_{H^1_k(B_R)}\\
& \leq  \left(C_3 (k+\beta(k))\left(C_{k,R}(\Omega)+C_k(\Omega_-)\right) + 2C_\psi\right)\frac{C_2k^2}{\beta(k)} \|g\|_{H^1_k(\R^n)},
\end{align*}
by Lemma \ref{lem:step2}. Recalling the bound \eqref{eq:nt2}, the result follows by choosing $\beta(k)=k^{2-p}_0 k^p$, for some $p\in [1,2]$. 
\end{proof}

\begin{remark} \label{rem:spence}
We have noted above that our arguments in the section are motivated by arguments in Spence \cite[\S3]{spence2014wavenumber} (and see \cite[Lemma 4.2]{chandler2020high}), bounding the DtN operator in the case when $\Omega$ is Lipschitz, in which arguments a complex wavenumber $\lambda$ satisfying $\lambda^2 = k^2+\ri \beta(k)$ plays a role, as it does above. But, in contrast to  \cite[\S3]{spence2014wavenumber}, where the choice $\beta(k) = k$ gives a sharper estimate than the choice $\beta(k)=k^2$, there appears to be no similar sharpening of the argument in our case by the choice $\beta(k)=k_0k$.
\end{remark}

Our proof of the next proposition has something of the flavour of the proof of \cite[Theorem 2.8]{BetCha11} (and see \cite[pp.~222--223]{chandler2012numerical}).
\begin{proof}[Proof of Proposition \ref{prop:lower}] We have seen in the proof of Proposition \ref{prop:Ak} in \S\ref{sec:Akbound} that, for every $\chi\in C_0^\infty(\R^n)$ and $k_0>0$, there exists $C>0$ such that \eqref{eq:boundn} holds for $-1\leq s\leq 1$. For all $R>0$,  choosing $\chi\in C^\infty_{0}(\R^n)$ so that $\chi=1$ on $B_R$ and applying this bound with $s=-1$ and $s=0$, it follows that there exists $C_R>0$, depending only on $\chi$, $R$ and $k_0>0$, such that
\begin{equation} \label{eq:Akbounds}
\|\cA_k\|_{H_\Gamma^{-1}\to L^2(B_R)} \leq C_R, \quad \|\cA_k\|_{L^2(B_R)\to L^2(B_R)} \leq \frac{C_R}{k}, 
\end{equation}
and
\begin{equation} \label{eq:Akbounds2}
\|\chi \cA_k\|_{L^2(B_R)\to H_k^1(\R^n)} \leq C_R,
\end{equation}
for $k\geq k_0$, where $H_\Gamma^{-1}$ is equipped with the norm $\|\cdot\|_{H^{-1}_k(\R^n)}$ in the first of these inequalities.

 By \cite[Theorem 1.4]{SiavashSimon0} and an inspection of its proof we see that, given some increasing, unbounded positive sequence $(k_m)_{m\in \N}$  satisfying \eqref{eq:kconstraint}, for some $c>0$, there exists $R_1>0$ such that, for every unbounded, increasing sequence $(\tilde a_m)_{m\in \N}\subset (0,\infty)$, there exists a compact obstacle $O_1$, with $\Omega_1:= \R^n\setminus O_1$ connected and $\max_{x\in O_1}|x|\leq R_1$, such that, for every $m\in \N$ and $R>R_1$,
$
C_{k_m,R}(\Omega_1) \geq \tilde a_m.
$
Let $O_2:= \overline{B_{R_1}} + 3R_1e_1$, where $e_1\in \R^n$ is the unit vector in the $x_1$ direction, so that $4R_1\geq |x| \geq  2R_1$, for $x\in O_2$. Then, where  $\Gamma := O:= O_1\cup O_2$, $\Omega := \R^n\setminus O$ is connected. Further it is easy to see, by inspecting the argument of \cite[Theorem 1.4]{SiavashSimon0}, that also
$ C_{k_m,R}(\Omega)\geq \tilde a_m$ for each $m\in \N$ and $R>R_\Gamma := \max_{x\in \Gamma}|x| = 4R_1$. Further,  since $\mathrm{int}(\Gamma)\supset B_{R_1}+3R_1e_1$ is non-empty, it follows, by Proposition \ref{prop:Ak2} and an inspection of its proof, that, for every $k_0>0$ there exists $c'>0$, depending only on $k_0$ and $R_1$, such that $\|A_{k,k}\|_k\geq c'k$, for $k\geq k_0$. 

So suppose $(k_m)_{m\in \N}$  is a positive, unbounded sequence satisfying \eqref{eq:kconstraint}, for some $c>0$,  set $k_0 := k_1$, let $R_1>0$ and $c'>0$, dependent only on $k_0$ and $R_1$, be as in the above paragraph,  choose $R>4R_1$, $\chi\in C_0^\infty(\R^n)$ with $\chi=1$ on $B_R$, and $C_R>0$ such that \eqref{eq:Akbounds} and \eqref{eq:Akbounds2} hold, and let $(a_m)_{m\in \N}$ be some unbounded, increasing, positive sequence. Define 
$$
\tilde a_m := \max\left(1,\frac{1}{c'k_0}\right)(C_R)^2a_m + \frac{C_R}{k_0}, \qquad m\in \N,
$$ 
and choose a compact set $O_1\subset \R^n$ with $\max_{x\in O_1}|x|\leq R_1$ such that $C_{k_m,R}(O_1^c)\geq \tilde a_m$ for each $m\in \N$, and define $O_2$, $\Gamma:=O:=O_1\cup O_2$, and $\Omega:= O^c$  as  in the above paragraph, so that $\Omega$ is connected, $R_\Gamma:= \max_{x\in \Gamma}|x|=4R_1<R$, $\chi\in C_{0,\Gamma}^\infty$, and $C_{k_m,R}(\Omega)\geq \tilde a_m$ for each $m\in \N$. Note that $A_{k,k}$ is invertible for every $k>0$, in particular for $k=k_m$, $m\in \N$, by Theorem \ref{thm:invert}.

For each $m\in \N$, since $C_{k_m,R}(\Omega)\geq \tilde a_m$, there exists $f_m\in L^2_{\mathrm{comp}}(\R^n)$ with $\|f_m\|_{L^2(\R^n)}=1$ and $\supp(f_m)\subset \Omega_R$ such that $\tilde u_m:=  R(k_m;\Omega)f_m\in \Hol_0(\Omega)$ satisfies $\|\tilde u_m\|_{L^2(\Omega_R)}\geq a_m$. Let $u_m$ denote $\tilde u_m$ extended to $\R^n$ by zero, and let $v_m:= u_m-\cA_{k_m} f_m\in \Hol(\R^n)$ and $\phi_m := -(\Delta+k_m^2)v_m$. Then, for each $m\in \N$, $\phi_m\in H^{-1,\mathrm{loc}}(\R^n)$ and $\phi_m=0$ in $\Omega$, so that $\phi_m\in H_\Gamma^{-1}$. Defining, for each $m\in \N$, $w_m:= v_m-\cA_{k_m}\phi_m = u_m -\cA_{k_m}(f_m+\phi_m)\in \Hol(\R^n)$, we see, using \eqref{eq:inv}, that $(\Delta+k_m^2)w_m=0$ in $\R^n$, but also $w_m$ satisfies the radiation condition \eqref{eq:src} with $k=k_m$ so that (by uniqueness of solution of the scattering problem in the special case that the obstacle is the empty set), $w_m=0$ so that $v_m=\cA_{k_m}\phi_m$. Thus,
using \eqref{eq:Akbounds},
$$
\|v_m\|_{L^2(\Omega_R)} \leq C_R \|\phi_m\|_{H^{-1}_k(\R^n)} \quad \mbox{and} \quad \|\cA_{k_m} f_m\|_{L^2(\Omega_R)} \leq C_R/k_m,
$$
for $m\in \N$, so that
\begin{align*}
 \|\phi_m\|_{H^{-1}_k(\R^n)} &\geq \frac{\|v_m\|_{L^2(\Omega_R)}}{C_R}\geq \frac{\|u_m\|_{L^2(\Omega_R)}-\|\cA_{k_m}f_m\|_{L^2(\Omega_R)}}{C_R}\\
 & \geq \frac{\tilde a_m}{C_R}-k_0^{-1} \geq \max\left(1,\frac{1}{c'k_0}\right)C_R a_m.
\end{align*}

For $m\in \N$ define $g_m \in \widetilde H^1(\Gamma^c)^{\perp_{k_m}}= \widetilde H^1(\Omega)^{\perp_{k_m}}$ by
\begin{align*} 
g_m := A_{k_m,k_m}\phi_m =P_{k_m}(\chi\cA_{k_m}\phi_m) = P_{k_m}(\chi v_m) = -P_{k_m}(\chi \cA_{k_m}f_m),
\end{align*}
since $\chi u_m\in \widetilde H^1(\Omega)$. Then, for $m\in \N$,
$$
\|g_m\|_{H^1_k(\R^n)} \leq \|\chi \cA_{k_m}f_m\|_{H^1_k(\R^n)} \leq C_R,
$$
by \eqref{eq:Akbounds2}, so that
$$
\|A_{k_m,k_m}^{-1}\|_{k_m} \geq \frac{\|\phi_m\|_{H^{-1}_k(\R^n)}}{\|g_m\|_{H^1_k(\R^n)}} \geq \max\left(1,\frac{1}{c'k_0}\right)a_m\geq a_m,
$$
and also, since $\|A_{k,k}\|_k\geq c'k$, for $k\geq k_0$, 
$$
\cond_{k_m}(A_{k_m,k_m}) = \|A_{k_m,k_m}\|_{k_m} \|A_{k_m,k_m}^{-1}\|_{k_m} \geq c'k_0 \max\left(1,\frac{1}{c'k_0}\right)a_m\geq a_m.
$$
\end{proof}

\section{The case that $\Gamma$ is a $d$-set} \label{sec:dset}
In this section we focus on the case that $\Gamma$ is a $d$-set, in the sense of \eqref{eq:dset} below, %for some $n-2<d\leq n$, 
in which case $\Gamma$ has Hausdorff dimension $d$. Significant examples of $d$-sets are: i) $\Gamma$ is the closure of a bounded Lipschitz domain ($d=n$); ii) $\Gamma$ is the boundary of a bounded Lipschitz domain ($d=n-1$); iii) $\Gamma$ is the attractor of an iterated function system of contracting similarities satisfying the standard open set condition (see, e.g., \cite[\S9.2]{Falconer2014} or \cite[\S2.1]{caetano2025integral}), for example the Sierpinski triangle, the Cantor set or Cantor dust, etc., in which case $\Gamma$ is a $d$-set with $d=\dimH(\Gamma)$, the Hausdorff dimension of $\Gamma$ (see \cite[Theorem 4.7]{Triebel97FracSpec}). 
We will restrict attention to the case $n-2<d\leq n$; if $\Gamma$ is a $d$-set with $0\leq d\leq n-2$ then $H_\Gamma^{-1}=\{0\}$ \cite[Rem.~3.5]{caetano2025integral}, so that \eqref{eq:iemain} has only the trivial solution and the scattered field $u = \cA_k \phi$ is zero.

The significance of the $d$-set case is as follows. Following \cite{caetano2025integral}, in a slight abuse of notation, we have described \eqref{eq:iemain} as an integral equation because the operator $A_k$, given by \eqref{eq:Akdef2}, is a composition of an integral operator $\cA_k$, given by \eqref{eq:Newt}, with two simpler operators (a multiplication and a projection operator). In the case that $\Gamma$ is a $d$-set, for some $n-2<d\leq n$, we can go further and write \eqref{eq:iemain} equivalently as an equation
\begin{equation} \label{eq:ie22}
\bA_k \bphi = \gamma g,
\end{equation}
where $\bA_k$ is itself an integral operator, a generalisation of the standard single-layer BIO $S_k$ (cf.~\eqref{eq:Sk} and \eqref{eq:bAkdef}). Further, we can relate the norms of $\bA_k$ and $A_k$ and their inverses, which we do, as the main results of this section, in \eqref{eq:bAknorm} and \eqref{eq:bAknorm2} below. As we point out, this leads to $k$-explicit bounds on the norms of $\bA_k$ and its inverse, by application of the results of \S\ref{sec:main}.  This leads in turn, see \S\ref{sec:slp2}, to bounds on the classical single-layer BIO and its inverse, by applying \eqref{eq:bAknorm} in the case $d=n-1$.

As in \cite[\S1.1]{JoWa84} and \cite[\S3]{Triebel97FracSpec}, given $0\leq d\leq n$ we say that a closed set $F\subset \R^n$ is a {\em$d$-set} (or an \textit{Ahlfors-David $d$-regular set}) if there exist constants $0<c_1\leq c_2$ such that
\begin{align}
\label{eq:dset}
c_{1}r^{d}\leq \cH^d(F\cap B(x,r))\leq c_{2}r^{d},\qquad x\in{F},\quad0<r\leq1,
\end{align}
where $B(x,r)\subset \R^n$ denotes the closed ball of radius $r$ centred on $x$ and $\cH^d$ denotes Hausdorff measure.  (For convenience we adopt the normalisation of \cite[Def. 2.1]{EvansGariepy}, so that
$\cH^d$ coincides with Lebesgue measure for $d = n$ \cite[Thm. 2.5]{EvansGariepy}.)
Condition \eqref{eq:dset} implies that $F$ is uniformly locally $d$-dimensional in the sense that $\dimH(F\cap B(x,r))=d$ for every $x\in {F}$ and $r>0$. 

	Let $\gamma:\cS(\R^n)\to C(\Gamma)$ denote the trace (or restriction) operator, defined by $\gamma u := u|_\Gamma$, $u\in \cS(\R^n)$. For $d>n-2$ this  trace operator has a unique extension to a bounded linear operator $\gamma:H^1(\R^n)\to L^2(\Gamma,\cH^d)$ (see \cite[Theorem 1]{Jonsson79}), where $L^2(\Gamma,\cH^d)$ denotes the Hilbert space of complex-valued functions defined on $\Gamma$ that are square-integrable with respect to $\cH^d$. Identifying $L^2(\Gamma,\cH^d)$ with its own dual in the standard way, the adjoint of this extension is $\gamma^*:L^2(\Gamma,\cH^d)\to H^{-1}(\R^n)$,  given by
\begin{equation} \label{eq:gamadj}
\langle\gamma^* \bpsi,u\rangle = (\bpsi,\gamma u)_{L^2(\Gamma,\cH^d)}, \qquad \bpsi\in L^2(\Gamma,\cH^d), \quad u\in H^1(\R^n),
\end{equation}
where $(\cdot,\cdot)_{L^2(\Gamma,\cH^d)}$ denotes the inner product on $L^2(\Gamma,\cH^d)$. Following \cite{caetano2025integral}, let $\bH(\Gamma):= \gamma(H^1(\R^n))$ denote the range of $\gamma:H^1(\R^n)\to L^2(\Gamma,\cH^d)$, which we equip with the quotient norm $\|\cdot\|_{\bH(\Gamma)}$, defined by
\begin{equation} \label{eq:bHnormdef}
\|\bpsi\|_{\bH(\Gamma)} := \inf_{\stackrel{u\in H^1(\R^n)}{\gamma u = \bpsi}}\|u\|_{H^1(\R^n)}, \qquad \bpsi \in \bH(\Gamma),
\end{equation}
so that $\bH(\Gamma)$ is a Hilbert space, unitarily isomorphic to the quotient space $H^1(\R^n)/\ker(\gamma)$, with $\bH(\Gamma)$ continuously embedded and dense in $L^2(\Gamma,\cH^d)$. Let $\bH^*(\Gamma)\supset L^2(\Gamma,\cH^d)$ denote the dual space of $\bH(\Gamma)$. Then $(\bH(\Gamma),$ $L^2(\Gamma,\cH^d),$ $\bH^*(\Gamma))$ is a Gelfand triple, with $L^2(\Gamma,\cH^d)$ the pivot space which is dense and continuously embedded in $\bH^*(\Gamma)$. We remark that our notations here simplify those in \cite{caetano2025integral}; $\bH(\Gamma)$ and $\bH^*(\Gamma)$ are denoted, respectively, $\bH^{t_d}(\Gamma)$ and $\bH^{-t_d}(\Gamma)$ in \cite[\S3.2]{caetano2025integral}, where the exponent $t_d$ has the value $t_d=1-(n-d)/2$.

As noted in \S\ref{sec:main}, and discussed in detail in \S\ref{sec:IE}, the operator $A_k$ in \eqref{eq:iemain} is a bounded operator from $H_\Gamma^{-1}$ to $R(P)\subset H^1(\R^n)$, where the closed subspace $R(P)$ is the range of a continuous projection operator $P:H^1(\R^n)\to H^1(\R^n)$ such that $\ker(P) = \widetilde H^1(\Gamma^c)$. As we discussed in \S\ref{sec:IE}, there is a natural isomorphism enabling the identification of $R(P)$ with $(H_\Gamma^{-1})^*$ and so $H_\Gamma^{-1}$ with $(R(P))^*$. Further, the kernel of $\gamma:H^1(\R^n)\to L^2(\Gamma,\cH^d)$ is $\ker(\gamma)=\widetilde H^1(\Gamma^c)$ (see, e.g., \cite[Theorem 3.9]{caetano2025integral} for the case $n-2<d<n$, and \cite[Corollary 3.2]{hinz25} for the case $d=n$). Thus $\gamma:R(P) \to \bH(\Gamma)$ is a continuous bijection, so an isomorphism, and so also is its adjoint $\gamma^*:\bH^*(\Gamma)\to (R(P))^* =H^{-1}_\Gamma$ (cf.~\cite[Theorem 3.9]{caetano2025integral}).

Since $\gamma:R(P) \to \bH(\Gamma)$ and  $\gamma^*:\bH^*(\Gamma)\to (R(P))^*$ are isomorphisms, the operator $\bA_k:\bH^*(\Gamma)\to \bH(\Gamma)$, defined by
\begin{equation} \label{eq:bAkdef}
\bA_k := \gamma A_k \gamma^*,
\end{equation}
is equivalent to $A_k:H_\Gamma^{-1}=(R(P))^*\to R(P)$, given by  \eqref{eq:Akdef2}, and \eqref{eq:ie22} is equivalent to \eqref{eq:iemain}, with the solutions of   \eqref{eq:ie22} and \eqref{eq:iemain} related by $\phi = \gamma^* \bphi$. Importantly, since $R(I-P)=\ker(P)=\widetilde H^1(\Gamma^c)=\ker(\gamma)$, \eqref{eq:bAkdef} and \eqref{eq:Akdef2} imply that
$$
\bA_k = \gamma \chi \cA_k \gamma^*,
$$
making clear that the definition of $\bA_k$ is independent of the choice of $P$.
Further, for $\bpsi\in L^\infty(\Gamma,\cH^d)$, which is a dense subspace of $\bH^*(\Gamma)$,
\begin{equation} \label{eq:bAk2}
\bA_k \bpsi(x) = \int_\Gamma \Phi_k(x,y)\bpsi(y) \rd \cH^d(y), \qquad \mbox{for } \cH^d\mbox{-a.e.} \quad x\in \Gamma,
\end{equation}
(see \cite[Theorem 3.16(iii)]{caetano2025integral}). Clearly, this is a representation for $\bA_k$ as an integral operator with kernel $\Phi_k(\cdot,\cdot)$.

Our focus in this paper, as described in \S\ref{sec:main}, is, firstly, on the canonical choice for $P$ in the case that $H^{\pm1}(\R^n)$ are equipped with the standard norms  \eqref{eq:unorm}. This is the choice $P=P_1$, where $P_1$ is orthogonal projection, so that $R(P_1) = \widetilde H^1(\Gamma^c)^\perp$ and, by standard Hilbert space theory (e.g., \cite[Theorem 4.11]{Rudin}),
\begin{equation} \label{eq:normbH}
\|\gamma u\|_{\bH(\Gamma)} = \|P_1 u\|_{H^1(\R^n)}, \qquad u\in H^1(\R^n),
\end{equation}
so that $\gamma:\widetilde H^1(\Gamma^c)^\perp \to \bH(\Gamma)$ and $\gamma^*:\bH^*(\Gamma)\to H^{-1}_\Gamma$ are unitary isomorphisms (cf.~\cite[Theorem 3.9]{caetano2025integral}). As stated in \S\ref{sec:main} we denote $A_k$ by $A_{1,k}$ in this case $P=P_1$.  It follows from \eqref{eq:bAkdef} that $\bA_k$ and $A_{1,k}$ are unitarily equivalent, in particular, as an operator $\bA_k:\bH^*(\Gamma)\to \bH(\Gamma)$,
\begin{equation} \label{eq:bAknorm}
\|\bA_{k}\| = \|A_{1,k}\|, \quad \mbox{and} \quad \|\bA_{k}^{-1}\| = \|A_{1,k}^{-1}\|
\end{equation}
if $A_{1,k}$ is invertible, i.e., if $k^2\not\in \sigma(-\Delta_D(\Omega_-))$. 

Our focus is, to an even greater extent, on the canonical choice for $P$ in the case that $H^{\pm1}(\R^n)$ are equipped with the $k$-dependent norms $\|\cdot\|_{H_k^{\pm 1}(\R^n)}$, given by \eqref{eq:knorm}. As discussed in \S\ref{sec:main} this is the choice $P=P_k$, where $P_k$ is orthogonal projection onto $R(P_k)= \widetilde H^1(\Gamma^c)^{\perp_k}$, the orthogonal complement of $\widetilde H^1(\Gamma^c)$ when $H^1(\R^n)$ is equipped with the norm \eqref{eq:knorm}. Let $\bH_k(\Gamma)$ denote $\bH(\Gamma)$ equipped with the norm $\|\cdot\|_{\bH_k(\Gamma)}$, equivalent to the norm $\|\cdot\|_{\bH(\Gamma)}$, defined by the right hand  side of \eqref{eq:bHnormdef} with the norm $\|u\|_{H^1(\R^n)}$ replaced by the norm $\|u\|_{H^1_k(\R^n)}$. Similarly, let $\bH_k^*(\Gamma)$ denote $\bH^*(\Gamma)$ equipped with the $k$-dependent dual space norm $\|\cdot\|_{\bH_k^*(\Gamma)}$ defined by
$$
\|\bpsi\|_{\bH^*_k(\Gamma)} := \sup_{0\neq \bphi \in \bH_k(\Gamma)}\frac{|\langle\!\langle \bpsi,\bphi\rangle\!\rangle|}{\|\bphi\|_{\bH_k(\Gamma)}}, \qquad \bpsi \in \bH^*(\Gamma),
$$
where $\langle\!\langle \cdot,\cdot \rangle\!\rangle$ denotes the duality pairing on $\bH^*(\Gamma)\times \bH(\Gamma)$ that is the extension of the inner product $(\cdot,\cdot)_{L^2(\Gamma,\cH^d)}$.
Then (cf.~\eqref{eq:normbH})
\begin{equation} \label{eq:normbHk}
\|\gamma u\|_{\bH_k(\Gamma)} = \|P_k u\|_{H_k^1(\R^n)}, \qquad u\in H^1(\R^n),
\end{equation}
so that $\gamma:\widetilde H^1(\Gamma^c)^{\perp_k} \to \bH_k(\Gamma)$ and $\gamma^*:\bH_k^*(\Gamma)\to H^{-1}_\Gamma$ are unitary isomorphisms (with  $H^{\pm 1}(\R^n)$ and their closed subspaces equipped with the norm \eqref{eq:knorm}). As noted in \S\ref{sec:main} we denote $A_k$ by $A_{k,k}$ in the case $P=P_k$, so that, by \eqref{eq:bAkdef},  $A_{k,k}$ and $\bA_k:\bH^*_k(\Gamma)\to \bH(\Gamma)$ are unitarily equivalent. Thus, where $\|\bA_k\|_k$ and $\|\bA_k^{-1}\|_k$ denote, respectively, the norms of $\bA_k:\bH^*_k(\Gamma)\to \bH_k(\Gamma)$ and its inverse, we have that
\begin{equation} \label{eq:bAknorm2}
\|\bA_{k}\|_k = \|A_{k,k}\|_k, \quad \mbox{and} \quad \|\bA_{k}^{-1}\|_k = \|A_{k,k}^{-1}\|_k
\end{equation}
if $k^2\not\in \sigma(-\Delta_D(\Omega_-))$. 

While we will not state these explicitly, bounds on $\bA_k$ and its inverse, as mappings between $\bH_k(\Gamma)$ and $\bH^*_k(\Gamma)$, can be deduced from \eqref{eq:bAknorm2} and the bounds on $A_{k,k}$ and its inverse in Propositions \ref{prop:Ak}, \ref{prop:Ak2}, and \ref{prop:lower}, Theorem \ref{thm:main}, and Corollaries \ref{cor:ss}, \ref{cor:Cinf}, and \ref{cor:gen}. Similarly, bounds on $\bA_k$ and its inverse, as mappings between $\bH(\Gamma)$ and $\bH^*(\Gamma)$, can be deduced from \eqref{eq:bAknorm} and bounds on $A_{1,k}$ and its inverse. These can, in turn, be deduced, through \eqref{eq:equiv2} and \eqref{eq:equiv2a}, from the bounds on $A_{k,k}$ and its inverse just referenced. 

As one example, from \eqref{eq:bAknorm}, \eqref{eq:equiv2}, and Proposition \ref{prop:Ak} it follows that, given any $k_0>0$ there exists $c>0$ such that $\|\bA_k\|\leq ck$, for $k\geq k_0$. It is worth noting that this leads to the  bound
\begin{equation} \label{eq:AL2bound}
\|\bA_k\|_{L^2} \leq Ck, \qquad k\geq k_0,
\end{equation}
for $\|\bA_k\|_{L^2}$, the norm of $\bA_k$ as a mapping on $L^2(\Gamma,\cH^d)$, by considering this mapping as the composition of the embedding $\iota^*:L^2(\Gamma,\cH^d)\to\bH^*(\Gamma)$, the mapping $\bA_k:\bH^*(\Gamma)\to \bH(\Gamma)$, and the embedding $\iota:\bH(\Gamma)\to L^2(\Gamma,\cH^d)$ (note that $\iota^*$ is the adjoint of $\iota$). Indeed, \eqref{eq:AL2bound} holds with $C= \|\iota\|^2 c$. 

In the case that $\Gamma$ is the boundary of a Lipschitz domain this estimate can be compared to known bounds on the $L^2$ norm of the standard single-layer BIO $S_k$, given by \eqref{eq:Sk}, since $\Gamma$ is a $d$-set with $d=n-1$ and $\cH^{n-1}$ coincides with surface measure (e.g., \cite[Theorem 3.8]{EvansGariepy}), so that $S_k=\bA_k$. We make this comparison in the comments before Corollary \ref{cor:L2}, which is proved in \S\ref{sec:slp2}, but first, in the following proposition, sharpen the bound \eqref{eq:AL2bound} by a more careful application of the above arguments, using that the mapping  $\gamma:H^s(\R^n)\to L^2(\Gamma,\cH^d)$ (and hence also $\gamma^*:L^2(\Gamma,\cH^d)\to H^{-s}(\R^n)$) is continuous not just for $s=1$ but for any $s>(n-d)/2$ \cite[Theorem 1]{Jonsson79}.

\begin{proposition} \label{prop:L2bound} Suppose that $\Gamma$ is a $d$-set with $n-2<d\leq n$. Then, given any $k_0>0$ and $\eps>0$, there exists $C>0$ such that
$$
\|\bA_k\|_{L^2} \leq Ck^{-1+\eps+n-d}, \qquad k\geq k_0.
$$
\end{proposition}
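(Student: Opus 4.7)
The plan is to factor $\bA_k$, acting on $L^2(\Gamma,\cH^d)$, as the composition
\begin{equation*}
L^2(\Gamma,\cH^d) \xrightarrow{\,\iota^*\,} \bH^*_k(\Gamma) \xrightarrow{\,\bA_k\,} \bH_k(\Gamma) \xrightarrow{\,\iota\,} L^2(\Gamma,\cH^d),
\end{equation*}
where $\iota:\bH_k(\Gamma)\hookrightarrow L^2(\Gamma,\cH^d)$ is the continuous embedding furnished by the Gelfand-triple structure and $\iota^*$ is its Hilbert-space adjoint (which realises the other embedding $L^2(\Gamma,\cH^d)\hookrightarrow\bH^*_k(\Gamma)$), and then to bound each of the three factors separately.

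The middle factor is already controlled by our main results: the norm identity \eqref{eq:bAknorm2} together with Proposition \ref{prop:Ak} yields, for some $c_1>0$, the bound $\|\bA_k\|_{\bH^*_k\to\bH_k}=\|A_{k,k}\|_k\leq c_1 k$ for $k\geq k_0$.

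The substantive technical step is to estimate $\|\iota\|_{\bH_k\to L^2}$. Since $d>n-2$ we have $(n-d)/2<1$, so I may fix any $s\in((n-d)/2, 1)$; by the Jonsson-Wallin trace theorem \cite[Theorem 1]{Jonsson79}, $\gamma:H^s(\R^n)\to L^2(\Gamma,\cH^d)$ is then bounded. A short Fourier-side calculation (based on the pointwise bound $(1+|\xi|^2)^s\leq C(s,k_0)\,k^{2(s-1)}(k^2+|\xi|^2)$, valid for $\xi\in\R^n$, $s\in[0,1]$, $k\geq k_0$, and verified by splitting into the regimes $|\xi|\leq k$ and $|\xi|\geq k$) gives
\begin{equation*}
\|u\|_{H^s(\R^n)}\leq C_s\, k^{s-1}\|u\|_{H^1_k(\R^n)}, \qquad u\in H^1(\R^n),\ k\geq k_0.
\end{equation*}
Composing with the trace bound and taking the infimum over $u\in H^1(\R^n)$ with $\gamma u=\bphi$, we deduce via \eqref{eq:normbHk} that $\|\iota\|_{\bH_k\to L^2}\leq C k^{s-1}$; by Hilbert-space duality, $\|\iota^*\|_{L^2\to\bH^*_k}\leq C k^{s-1}$ as well.

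Multiplying the three factor bounds gives $\|\bA_k\|_{L^2}\leq C' k^{s-1}\cdot k\cdot k^{s-1}=C' k^{2s-1}$ for $k\geq k_0$, and choosing $s=(n-d)/2+\varepsilon/2$ makes $2s-1=-1+(n-d)+\varepsilon$, which is the claimed exponent. The essential content is the invocation of Proposition \ref{prop:Ak} through the factorization; the only genuinely new ingredient is the $k$-weighted Sobolev comparison, which is elementary.
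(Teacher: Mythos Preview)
Your proof is correct and follows essentially the same approach as the paper: the same factorisation through $\bH_k^*(\Gamma)\to\bH_k(\Gamma)$, the same appeal to Proposition~\ref{prop:Ak} via \eqref{eq:bAknorm2} for the middle factor, and the same use of the Jonsson trace theorem at exponent $s=(n-d+\eps)/2$ combined with an elementary $k$-weighted Sobolev comparison to bound the embeddings. The only cosmetic difference is that the paper bounds $\|\iota^*\|_k$ directly via $\gamma^*$ (using the isometry $\gamma^*:\bH_k^*(\Gamma)\to H_k^{-1}(\R^n)$ and the chain $\|\gamma^*\bpsi\|_{H_k^{-1}}\leq k^{s-1}\|\gamma^*\bpsi\|_{H_k^{-s}}\leq C'k^{s-1}\|\gamma^*\bpsi\|_{H^{-s}}\leq C''k^{s-1}\|\bpsi\|_{L^2}$), whereas you bound $\|\iota\|_k$ via $\gamma$ and then dualise---but these are the same argument.
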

\begin{proof} Clearly it is enough to show the above bound in the case that $0<\eps<2 +d-n$.
Arguing as we did to obtain \eqref{eq:AL2bound}, we have that
$$
\|\bA_k\|_{L^2} \leq \|\iota^*\|_k \|\bA_k\|_k \|\iota\|_k,
$$ 
where $\|\iota^*\|_k$ and $\|\iota\|_k$ denote the norms of the embeddings $\iota^*:L^2(\Gamma,\cH^d)\to \bH_k^*(\Gamma)$ and $\iota:\bH_k(\Gamma)\to L^2(\Gamma, \cH^d)$, respectively, and note that $\|\iota\|_k=\|\iota^*\|_k$. Note further that $\gamma^*:\bH_k^*(\Gamma)\to H^{-1}_k(\R^n)$ is an isometry and $\gamma^*\iota^* \bpsi =\gamma^*\bpsi$, for $\bpsi\in L^2(\Gamma,\cH^d)$, with $\gamma^*\bpsi\in H^{-1}(\R^n)$ given by \eqref{eq:gamadj}. Thus, and since $\gamma^*:L^2(\Gamma,\cH^d)\to H^{-(\eps+n-d)/2}(\R^n)$ is continuous for $\eps>0$, and recalling \eqref{eq:equiv}, given $k_0>0$ and $0<\eps<2+d-n$ there exists $C',C''>0$ such that, for all $\bpsi\in L^2(\Gamma, \cH^d)$ and $k\geq k_0$, where $s:= (\eps+n-d)/2 \in (0,1)$,
\begin{eqnarray*}
\|\iota^*\bpsi\|_{\bH_k^*(\Gamma)} &= &\|\gamma^*\bpsi\|_{H^{-1}_k(\R^n)}\\
&\leq &k^{-1+s}\|\gamma^*\bpsi\|_{H_k^{-s}(\R^n)}\\
& \leq &C' k^{-1+s}\|\gamma^*\bpsi\|_{H^{-s}(\R^n)}\\
& \leq &C'' k^{-1+s}\|\bpsi\|_{L^2(\Gamma,\cH^d)}.
\end{eqnarray*}
%since $\gamma^*:L^2(\Gamma,\cH^d)\to H^{(\eps+n-d)/2}(\R^n)$) is continuous. 
Thus $\|\iota\|_k = \|\iota^*\|_k \leq C'' k^{-1+s}$, for $k\geq k_0$, and the result follows by \eqref{eq:bAknorm2} and Proposition \ref{prop:Ak}.
\end{proof}

The norm $\|\bA_k\|_{L^2}$ can also be estimated via the representation \eqref{eq:bAk2}. The following is one such estimate, which is sharper than Proposition \ref{prop:L2bound} in its dependence on $k$ only when $n-2<d\leq (n+1)/2$, which is possible only for $n\leq 4$.

\begin{proposition} \label{prop:L2bound2} Suppose that $\Gamma$ is a $d$-set with $n-2<d\leq n$. Then, given any $k_0>0$, there exists $C_1,C_2>0$ such that
$$
\|\bA_k\|_{L^2} \leq C_1 k^{(n-3)/2}, \qquad k\geq k_0,
$$
if $n\geq 3$, and
$$
\|\bA_k\|_{L^2} \leq C_2 \left\{\begin{array}{ll} k^{-d}, & \mbox{if } d<1/2,\\  k^{-1/2}\log(1+k), & \mbox{if } d=1/2,\\ k^{-1/2}, & \mbox{if }d> 1/2,\end{array}\right.
$$
if $n=2$.
\end{proposition}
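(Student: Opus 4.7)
The plan is to apply Schur's test to the integral representation \eqref{eq:bAk2} of $\bA_k$. Since the kernel $\Phi_k(x,y)$ is symmetric in $x$ and $y$, Schur's test yields
$$
\|\bA_k\|_{L^2} \le \sup_{x\in\Gamma} \int_\Gamma |\Phi_k(x,y)|\, \rd\cH^d(y),
$$
so the task reduces to bounding this supremum uniformly for $k\ge k_0$.

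From \eqref{eq:Phidef} and the small- and large-argument asymptotics of the Hankel function $H^{(1)}_{(n-2)/2}$ (see, e.g., \cite[\S10.2, \S10.7, \S10.17]{NIST}), there exists $C>0$, depending only on $n$ and $k_0$, such that, for $k\ge k_0$ and $x\ne y$,
$$
|\Phi_k(x,y)| \le \begin{cases} C|x-y|^{-(n-2)}, & n\ge 3,\ k|x-y|\le 1,\\ C\bigl(1+|\log(k|x-y|)|\bigr), & n=2,\ k|x-y|\le 1,\\ Ck^{(n-3)/2}|x-y|^{-(n-1)/2}, & k|x-y|\ge 1. \end{cases}
$$
The first two cases capture the near-field singularity of $\Phi_k$, and the third captures its oscillatory decay at large argument.

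For each fixed $x\in\Gamma$ I would split $\Gamma$ at $|x-y|=1/k$ and decompose each part dyadically via $A_j:=\{y\in\Gamma:2^{-j-1}/k<|x-y|\le 2^{-j}/k\}$ for the near region and $B_j:=\{y\in\Gamma:2^j/k<|x-y|\le 2^{j+1}/k\}$ for the far region, truncating the latter once $2^j/k$ exceeds $\mathrm{diam}(\Gamma)$. On each annulus the $d$-set bound \eqref{eq:dset} (together with compactness of $\Gamma$ when $2^{j+1}/k>1$) yields $\cH^d(A_j)\lesssim(2^{-j}/k)^d$ and, in the small-radius regime, $\cH^d(B_j)\lesssim(2^{j}/k)^d$, reducing each contribution to a geometric series. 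The common ratios are $2^{n-2-d}$ for the near region (when $n\ge 3$) and $2^{d-(n-1)/2}$ for the small-$r$ part of the far region, while the large-$r$ part of the far region always produces a rapidly convergent $2^{-j(n-1)/2}$ sum starting near $j\sim\log_2 k$.

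For $n\ge 3$, the assumption $d>n-2\ge (n-1)/2$ gives $d>(n-1)/2$ strictly, so the small-$r$ far series is super-critical and dominated by its last term at $j\sim\log_2 k$, contributing $k^{n-2-d}\cdot k^{d-(n-1)/2}=k^{(n-3)/2}$; a parallel check shows that the near contribution $\lesssim k^{n-2-d}$ and the large-$r$ far tail are each $\lesssim k^{(n-3)/2}$ for $k\ge k_0$, yielding the first bound. For $n=2$, the near contribution is $\sum_{j\ge 0}(j+1)(2^{-j}/k)^d\lesssim k^{-d}$, and the small-$r$ far series has common ratio $2^{d-1/2}$, producing $k^{-d}$, $k^{-1/2}\log(1+k)$, or $k^{-1/2}$ in the sub-critical ($d<1/2$), critical ($d=1/2$), and super-critical ($d>1/2$) cases respectively, while the large-$r$ far tail is always $\lesssim k^{-1/2}$. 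Summing and comparing the three contributions gives exactly the three stated cases. The main technical hurdle is the careful bookkeeping across the sub/super-critical thresholds at $d=(n-1)/2$ and the transition at $|x-y|\sim 1$ between the two $\cH^d$ regimes, ensuring that no spurious factor of $k$ appears.
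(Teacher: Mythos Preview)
Your proof is correct and follows essentially the same strategy as the paper: both reduce $\|\bA_k\|_{L^2}$ to the quantity $\sup_{x\in\Gamma}\int_\Gamma|\Phi_k(x,y)|\,\rd\cH^d(y)$ (the paper via Riesz--Thorin, which for a symmetric kernel coincides with your Schur bound), and both then exploit the $d$-set condition \eqref{eq:dset} together with the small- and large-argument asymptotics of $H^{(1)}_{(n-2)/2}$. The only difference is packaging: the paper uses the monotonicity of $|H^{(1)}_\nu|$ on $(0,\infty)$ together with \cite[Lemma~2.1]{caetano2024} to convert the integral over $\Gamma$ directly into the one-dimensional integral $\int_0^D r^{d-1}f(kr)\,\rd r$ and then reads off the asymptotics after a change of variable, whereas you carry out the equivalent dyadic-shell computation by hand and sum the resulting geometric series. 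Your route is slightly more laborious but avoids citing the external lemma and the monotonicity fact; the paper's route is shorter but relies on those inputs.
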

\begin{proof} We adapt the argument of \cite[Theorem 3.3]{ChGrLaLi:09} which proves this result in the case that $n=2$ or 3 and $\Gamma$ is the boundary of a Lipschitz domain, so $d=n-1$. Arguing as in the proof of that theorem we have, by Riesz-Thorin interpolation (e.g., \cite[Theorem 1.1.1]{Bergh:76}) and since $\Phi_k(x,y)=\Phi_k(y,x)$, that $\|\bA_k\|_{L^2} \leq \max(\|\bA_k\|_{L^\infty},\|\bA_k\|_{L^1}) = \|\bA_k\|_{L^\infty}$, where $\|\bA_k\|_{L^p}$ denotes the norm of $\bA_k$ as an operator on $L^p(\Gamma,\cH^d)$, for $1\leq p\leq \infty$. Further, from \eqref{eq:bAk2}  (see, e.g., \cite{Jorgens}),
$$
\|\bA_k\|_{L^\infty} = \mathrm{ess}\, \sup_{x\in \Gamma} \int_\Gamma|\Phi_k(x,y)| \, \rd \cH^d(y) = k^{n-2} \mathrm{ess}\,\sup_{x\in \Gamma} \int_\Gamma f(k|x-y|) \, \rd \cH^d(y),
$$
by \eqref{eq:Phidef}, where $f(r) := (2\pi r)^{-(n-2)/2}|H^{(1)}_{(n-2)/2}(r)|/4$, $r>0$. Since $|H^{(1)}_\nu|$ is decreasing on $(0,\infty)$ for $\nu\geq 0$ \cite[\S13.74]{Watson}, it follows from \cite[Lemma 2.1]{caetano2024} that, for some constant $C>0$ independent of $k$, and where $D:= \mathrm{diam}(\Gamma)$, 
\begin{equation} \label{eq:bAb2}
\|\bA_k\|_{L^\infty} \leq C k^{n-2} \int_0^D r^{d-1}f(kr)\, \rd r = C k^{n-2-d} \int_0^{kD} t^{d-1} f(t)\, \rd t.
\end{equation} 
By standard asymptotics of Bessel functions \cite[\S10.8]{NIST},  $f(t)=O(\log(t))$ as $t\to 0^+$ if $n=2$, $= O(t^{2-n})$ as $t\to 0^+$ if $n>2$. Thus, and since $d>n-2$, the integrals in \eqref{eq:bAb2}  are well-defined. Further, by \cite[\S10.17(i)]{NIST},  $f(t) = O(t^{-(n-1)/2})$ as $t\to \infty$ which, together with \eqref{eq:bAb2} and recalling that $\|\bA_k\|_{L^2}\leq \|\bA_k\|_{L^\infty}$, implies the required result.
\end{proof}

\section{The classical single-layer BIO and its inverse} \label{sec:slp2}
Where $S_k$ is the classical single-layer BIO, defined by \eqref{eq:Sk},  in this section we establish the bounds on $S_k$ and $S_k^{-1}$ stated as Corollaries \ref{thm:Sk0}--\ref{cor:L2}.

Suppose that $\Omega^{*}$ is a bounded Lipschitz domain. In the case that $O=\overline{\Omega^{*}}$ and $\Gamma = \partial O$, $\Gamma$ is a $d$-set with $d=n-1$ and the Hausdorff measure $\cH^d$ coincides with surface measure on $\Gamma$, as noted before Proposition \ref{prop:L2bound}, so that $L^2(\Gamma,\cH^d)=L^2(\Gamma)$. Further, as discussed in \cite[Remark 3.10]{caetano2025integral} and \cite[Remark 3.6]{caetano2021}, $H^{1/2}(\Gamma)=\bH(\Gamma)$, with equivalence of norms, so that the Gelfand triples $(H^{1/2}(\Gamma),L^2(\Gamma),H^{-1/2}(\Gamma))$ and $(\bH(\Gamma),L^2(\Gamma,\cH^{n-1}),\bH^*(\Gamma))$ coincide, in particular $\bH^*(\Gamma)=H^{-1/2}(\Gamma)$ with equivalence of norms. Further, by \eqref{eq:bAk2} and \eqref{eq:Sk}, $\bA_k\bpsi=S_k\bpsi$, for $\psi\in L^\infty(\Gamma)=L^\infty(\Gamma,\bH^{n-1})$, which is dense in $H^{-1/2}(\Gamma)=\bH^*(\Gamma)$. Thus Corollary \ref{thm:Sk0}, and Corollary \ref{cor:L2} as it applies to the case that $\Gamma$ is the boundary of a Lipschitz domain, follow from \eqref{eq:bAknorm}, \eqref{eq:equiv2}, \eqref{eq:equiv2a}, and Propositions \ref{prop:Ak}, \ref{prop:L2bound}, and \ref{prop:L2bound2}. 

To see the validity of Corollary \ref{cor:L2} as it applies to the Lipschitz screen case, suppose that $U$ is a Lipschitz screen in the sense of \S\ref{sec:boundary}, $\Gamma:= \overline{U}$, and $\Pi := \Gamma \setminus U$. Then, for some bounded Lipschitz domain $\Omega^{*}$ and where $\Gamma':= \partial \Omega^{*}$, $U$ and $V:= \Gamma'\setminus \Gamma$ are relatively open subsets of $\Gamma'$ and $(U,\Pi,V)$ is a Lipschitz dissection of $\Gamma'$ in the sense of \S\ref{sec:boundary}. It follows that $\Gamma$ is a $d$-set with $d=n-1$ and, as noted above, the Hausdorff measure $\cH^d$ coincides with surface measure on $\Gamma$. Further, $\Pi$ has zero surface measure, i.e., $\cH^{n-1}(\Pi)=0$, so that $L^2(\Gamma, \cH^{n-1})=L^2(\Gamma)=L^2(U)$. Viewing $L^2(\Gamma)$ as a closed subspace of $L^2(\Gamma')$, Corollary \ref{cor:L2} in the Lipschitz screen case follows from  the same corollary applied in the case of the Lipschitz boundary $\Gamma'$.

To see that Corollary \ref{thm:Sk} also holds, continuing to use the notation of the previous paragraph, let $\gamma:H^1(\R^n)\to L^2(\Gamma,\cH^{n-1})=L^2(U)$ and $\gamma':H^1(\R^n)\to L^2(\Gamma',\cH^{n-1})=L^2(\Gamma')$ denote the continuous trace operators introduced in \S\ref{sec:dset}, and let $|_{U}:L^2(\Gamma')\to L^2(U)$ denotes the restriction operator.  Clearly $\gamma u = |_U\circ \gamma'u$, for $u\in \cS(\R^n)$, and hence for $u\in H^1(\R^n)$. Since also $|_{U}:H^{1/2}(\Gamma')\to H^{1/2}(U)$ is continuous and surjective and, as noted above, $\bH(\Gamma')=H^{1/2}(\Gamma')$, we see that, as sets, $\bH(\Gamma)=H^{1/2}(U)$. To see that the norms on $\bH(\Gamma)$ and $H^{1/2}(U)$ are equivalent, recall that we have noted above that the norms on $\bH(\Gamma')$ and $H^{1/2}(\Gamma')$ are equivalent; in particular, for some $c>0$, $\|\bphi\|_{\bH(\Gamma')} \leq c\|\bphi\|_{H^{1/2}(\Gamma')}$, $\bphi\in H^{1/2}(\Gamma')$. Further,
suppose that $\bpsi \in \bH(\Gamma)=H^{1/2}(U)$. Then there exists $u\in H^1(\R^n)$ such that $\gamma u = \bpsi$ and
$\|u\|_{H^1(\R^n)} \leq 2\|\bpsi\|_{\bH(\Gamma)}$. Thus, and where $c'$ denotes the norm of $\gamma':H^1(\R^n)\to H^{1/2}(\Gamma')$,  
$$
\|\bpsi\|_{H^{1/2}(U)} = \| |_U\circ \gamma' u\|_{H^{1/2}(U)} \leq \|\gamma'u\|_{H^{1/2}(\Gamma')} \leq c' \|u\|_{H^1(\R^n)} \leq 2c'\|\bpsi\|_{\bH(\Gamma)}.
$$ 
Similarly, there exists $\bphi\in H^{1/2}(\Gamma')$ and $v\in H^1(\R^n)$ such that $\bpsi = \bphi|_U$, $\gamma' v=\bphi$, $\|\bphi\|_{H^{1/2}(\Gamma')} \leq 2 \|\bpsi\|_{H^{1/2}(U)}$, and 
$$
\|v\|_{H^1(\R^n)}\leq 2\|\bphi\|_{\bH(\Gamma')} \leq 2c\|\bphi\|_{H^{1/2}(\Gamma')},
$$
so that, and since $\gamma v = |_U\circ \gamma' v = \bpsi$,
$$
\|\bpsi\|_{\bH(\Gamma)} \leq \|v\|_{H^1(\R^n)}  \leq 2c\|\bphi\|_{H^{1/2}(\Gamma')} \leq 4c\|\bpsi\|_{H^{1/2}(U)}.
$$

We have shown that $L^2(\Gamma, \cH^{n-1})=L^2(U)$, and that $\bH(\Gamma)=H^{1/2}(U)$, with equivalence of norms. Thus the two Gelfand triples, $(H^{1/2}(U),L^2(U),$ $\widetilde H^{-1/2}(U))$ and $(\bH(\Gamma),L^2(\Gamma,\cH^{n-1}),\bH^*(\Gamma))$ coincide, in particular $\bH^*(\Gamma)=\widetilde H^{-1/2}(U)$, with equivalence of norms. Further, by \eqref{eq:bAk2} and \eqref{eq:Sk}, $\bA_k\bpsi=S_k\bpsi$, for $\psi\in L^\infty(U)=L^\infty(\Gamma,\bH^{n-1})$, which is dense in $\bH^*(\Gamma)=\widetilde H^{-1/2}(U)$. Thus Corollary \ref{thm:Sk} follows from \eqref{eq:bAknorm}, \eqref{eq:equiv2}, \eqref{eq:equiv2a}, Proposition \ref{prop:Ak}, and Corollaries \ref{cor:ss} and \ref{cor:gen}.

\subsection*{Acknowledgements}
\small The authors are grateful to David Lafontaine, Euan Spence, Melissa Tacy, and Jared Wunsch for stimulating discussions in relation to this work. The first and second authors acknowledge, respectively, the support of the New Zealand Marsden Fund (Grant No.~MFP-UOA2527), and the support of a UK Engineering and Physical Sciences Research Council (EPSRC) PhD Studentship. \normalsize

\subsection*{Availability of data and materials} \small The authors confirm that the research did
not involve the generation or analysis of any datasets.

%\bibliography{../bibliography}
%\bibliographystyle{abbrv}

\end{document}